\newtheorem{theorem}{Theorem}
\newtheorem{corollary}[theorem]{Corollary}
\newtheorem{lemma}[theorem]{Lemma}
\newtheorem{definition}[theorem]{Definition}
\newtheorem{proposition}[theorem]{Proposition}
\begin{document}
\def\F{{\mathbb F}}
\title{ On pre-Lie rings related to some non-Lazard braces}
\author{ Agata Smoktunowicz}
\date{ }
\maketitle
\begin{abstract} 

Let $A$ be a brace of cardinality $p^{n}$ for some prime number $p$.  Denote $ann(p^{i})=\{a\in A: p^{i}a=0\}$. 
 Suppose that for  $i=1,2,\ldots $ and all $a,b\in A$ we have 
 \[a*(a*(\cdots *a*b))\in pA, a*(a*(\cdots *a*ann(p^{i})))\in ann(p^{i-1})\]  where $a$ appears less than $\frac {p-1}4$ times in this expression. 
 Let $k$ be such that $p^{k(p-1)}A=0$. It is shown that  the brace $A/ann(p^{4k})$  
is obtained from a left nilpotent  pre-Lie ring  by a formula which depends only on the additive group of brace $A$.  We also obtain some applications of this result. 
\end{abstract}

\section{Introduction}

 In \cite{Khukhro}  it was shown that if $L$ is a Lie ring  of cardinality $p^{n}$ for a prime number $p$,  natural number $n$ and nilpotency index less than $p$, 
 then Lazard's correspondence holds for the Lie ring $L$. In this paper we will investigate whether or not this result can be generalised  for  ad-nil Lie rings  of index less than ${\frac {p-1}4}$ that have left nilpotent pre-Lie rings. Observe that in  \cite{Bellester}  Ballester-Bolinches, Esteban-Romero, Ferrara,  Perez-Calabuig and  Trombetti gave an example of a  right nil brace which is not right nilpotent.
 On the other hand, by the result of Zelmanov,  finite Lie rings which are ad-nil of bounded index are nilpotent of a bounded index \cite{Zelmanov}. However,  it is not clear if this nilpotency index assures that Lazard's correspondence holds.  It is also known that every finite brace which is left nil is left nilpotent \cite{note}.  It is also known that a left nil pre-Lie algebra of index $ n$ over a field of characteristic zero is left nilpotent \cite{Filipov}.

This paper is also motivated by the following question: is there a generalisation of Lazard's correspondence 
  which works for Lie rings $L$ such that the factor Lie rings ${\frac {L}{pL}}$ are nilpotent rings of nilpotency index less than $ p$? In particular, if L is a Lie ring such that ${\frac {L}{pL}}$ is a nilpotent Lie ring of nilpotency index less than $p$, is there an analogon of the Baker-Campbel-Hausdorff formula which  produces a group on the set ${\frac {L}{p^{i}L}}$ for some $i$ (where $2i$ is less than the nilpotency index of $L$)?  A similar question can be posed about the group of flows \cite{AG}. Recall that the group obtained by the group of flows from a pre-Lie ring is isomorphic to the group obtained by the BCH formula applied to the corresponding Lie ring.

 We investigate these questions in the context of pre-Lie rings and braces. 
 In this paper we consider braces satisfying the following 
property $1'$\label{property1'}:
  Let  $A$ be a brace of cardinality $p^{n}$ for a prime number $p>2$ and a natural number $n$. We say that $A$ satisfies property $1'$ if  
$( a*(a\cdots *(a*b)\cdots ))\in pA$ where $a$ appears $\lfloor {\frac {p-1}4}\rfloor$ times in this expression. 
Moreover, we assume that 
$a*(a\cdots *(a*a)\cdots )\in pA$ where $a$ appears $\lfloor {\frac {p-1}4}\rfloor $ times in this expression  
 and  $\lfloor {\frac {p-1}4}\rfloor $ denotes the largest integer not exceeding ${\frac {p-1}4}$.
 We also consider braces satisfying the following Property $1''$: 
  Let  $A$ be a brace of cardinality $p^{n}$ for a prime number $p>2$ and a natural number $n$.  Denote $ann(p^{i})=\{a\in A: p^{i}a=0\}$. We say that $A$ satisfies property $1''$ if    
$a*(a\cdots *(a*(ann(p^{i}))))\in ann(p^{i-1}),$
  for $i=1,2, \ldots $ and for all $a\in A$, where $a$ appears $\lfloor {\frac {p-1}4}\rfloor$ times in this expression.

In this paper we associate Lie rings to some braces satisfying Properties $1'$ and $1''$. The multiplicative groups of such braces can have arbitrarily high nilpotency index, and in general they do not fit the context of Lazard's correspondence.  Our result is inspired by paper \cite{Shalev} where
 finite analogs of Lazard's $p$-adic Lie rings of $p$-adic Lie groups were constructed for finite groups outside of the context of Lazard's correspondence. In Theorem \ref{1} we associate pre-Lie rings to braces satisfying some special conditions by generalising results from \cite{asas}.  
Our main result is Theorem \ref{main}, which gives the passage from the obtained pre-Lie rings back to factor braces $A/ann(p^{i})$ for an appropriate $i$.

\begin{theorem}\label{main} Let $A$ be a brace of cardinality $p^{n}$ for some prime number $p$ and a natural number $n$. Suppose that $k$ is such that $p^{k(p-1)}A=0$.
Suppose that  $A$ satisfies properties $1'$ and $1''$.
 Then the brace $A/ann(p^{4k})$ is obtained from the left nilpotent  pre-Lie ring $(A/ann(p^{2k}), +, \bullet)$ obtained in Theorem \ref{1} by an algebraic formula which depends only on the additive group of $A$ (similar to the group of flows formula from \cite{AG}).
 \end{theorem}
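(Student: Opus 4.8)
The plan is to produce the required map explicitly, as a $p$-truncated instance of the \emph{group of flows} construction of \cite{AG} applied to the left nilpotent pre-Lie ring $(A/ann(p^{2k}),+,\bullet)$ furnished by Theorem~\ref{1}, and then to verify that this truncated formula (i) is well defined modulo $ann(p^{4k})$ and (ii) returns the operation $\circ$ of the brace $A/ann(p^{4k})$. Recall that over $\mathbb{Q}$ a left nilpotent pre-Lie ring $(L,+,\bullet)$ carries a group structure $(L,+,\diamond)$ whose product has the shape
\[
a\diamond b \;=\; b+\sum_{i\ge 1}\ \sum_{w}\,c_{w}\,w(a;b),
\]
where $w$ runs over the $\bullet$-words that contain $a$ exactly $i$ times and $b$ exactly once, where the rational coefficients $c_{w}$ depend only on the combinatorial shape of $w$ and not on $L$ --- with $v_{p}(c_{w})\ge -v_{p}(i!)$ --- and where left nilpotency makes the inner sums finite. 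By the remark in the introduction, the group $(L,+,\diamond)$ is isomorphic to the one obtained from the Lie ring $[a,b]=a\bullet b-b\bullet a$ through the Baker--Campbell--Hausdorff formula; it is the flow formula that is convenient here, precisely because its coefficients are universal, and this universality is the sense in which the reconstructed map will ``depend only on the additive group of $A$''. As a preliminary I would record the standard fact that in any left nilpotent pre-Lie ring an arbitrary $\bullet$-word is a $\mathbb{Z}$-linear combination of left-normed ones, so that all the estimates below need only be proved for left-normed products $a\bullet(a\bullet(\cdots\bullet(a\bullet b)))$.

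The technical heart is a family of \emph{depth estimates}. Because $\bullet$ is given in Theorem~\ref{1} as an explicit $*$-power series, Properties~$1'$ and $1''$ transport into the following: a left-normed $\bullet$-product in which $a$ occurs at least $m\lfloor(p-1)/4\rfloor$ times maps $A$ into $p^{m}A$ and maps $ann(p^{i})$ into $ann(p^{i-m})$. Here one uses, for the first assertion, that $\bullet$ is multilinear so a factor $p$ in the innermost argument is preserved --- which is what lets the one-block statement coming from Property~$1'$ be iterated --- and, for the second assertion, the iterate of Property~$1''$. Combining this with $p^{k(p-1)}A=0$ one sees that the sum in the flow formula is effectively finite, and --- the delicate point --- that each surviving term $c_{w}\,w(a;b)$, although $c_{w}$ may carry a denominator dividing $i!$, represents a well-defined element of $A/ann(p^{4k})$: the power of $p$ that the estimates force onto $w(a;b)$ dominates $v_{p}(i!)$, with the margin supplied by using $\lfloor(p-1)/4\rfloor$ in place of $\lfloor(p-1)/2\rfloor$, and the residual imprecision is exactly what is killed by passing to $A/ann(p^{4k})$. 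I expect this bookkeeping to be the main obstacle: one is propagating information through the two filtrations $\{p^{i}A\}$ and $\{ann(p^{i})\}$ simultaneously, and must check that the precision lost first in constructing $\bullet$ on $A/ann(p^{2k})$ and again in running the flow formula in reverse together never exceeds $p^{4k}$ --- matching these constants is what pins down both the hypothesis $p^{k(p-1)}A=0$ and the quotient $A/ann(p^{4k})$ in the statement.

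It then remains to identify the value of the truncated formula with the brace operation. Over $\mathbb{Q}$ --- equivalently, after inverting $p$ on each finite layer that intervenes --- the passage $*\mapsto\bullet$ of Theorem~\ref{1} and the flow formula $\bullet\mapsto\diamond$ are mutually inverse, this being the pre-Lie/brace correspondence underlying \cite{asas} and \cite{AG}; hence the identity ``the flow formula applied to $\bullet$ reproduces $\circ$'' holds at the level of formal $*$- and $\bullet$-power series. The depth estimates show that the tails of both series vanish modulo the relevant annihilators, so this identity descends to an honest equality: writing $F$ for the truncated flow formula, $F(\bar a,\bar b)=\overline{a\circ b}$ for all $a,b\in A$, with the computation performed using the pre-Lie structure on $A/ann(p^{2k})$ and the result well defined in $A/ann(p^{4k})$. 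Since $\circ$ makes the quotient $A/ann(p^{4k})$ into a brace, the brace axioms for the reconstructed operation are automatic; and since the $c_{w}$ are universal, $F$ is assembled from $\bullet$ and the additive group of $A$ alone, which is the assertion of Theorem~\ref{main}.
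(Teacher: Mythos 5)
Your route is genuinely different from the paper's: you propose to run a $p$-truncated group-of-flows formula directly on the big pre-Lie ring $(A/ann(p^{2k}),+,\bullet)$, controlling the denominators $v_p(i!)$ by divisibility estimates extracted from Properties $1'$ and $1''$. The paper never applies the flow formula to a ring of large nilpotency index: it restricts to the sub-brace $B=p^{k}A$, which satisfies $B^{[p-1]}=0$ because $p^{(p-1)k}A=0$, so that $B$ lies inside the classical Lazard/group-of-flows range and no denominators appear at all; the resulting formula is then transported to $A/ann(p^{4k})$ by the pullback $\wp^{-1}$ (Theorem \ref{nareszcie}, Parts 1 and 2) together with Theorem \ref{71}, which expresses $*$ through $\odot$, $+$ and $\rho^{-1}$.

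There is a genuine gap in your argument. Your final paragraph asserts that ``the passage $*\mapsto\bullet$ of Theorem \ref{1} and the flow formula $\bullet\mapsto\diamond$ are mutually inverse \dots\ at the level of formal power series.'' This is not true as stated, and it is precisely the hard content of the theorem. The operation $\bullet$ of Theorem \ref{1} is built from the averaging sum $\sum_{i=0}^{p-2}\xi^{p-1-i}((\xi^{i}a)*b)$, which extracts from $\lambda_{a}$ not its degree-one component but the sum of all homogeneous components of degree $\equiv 1 \pmod{p-1}$; only when the brace is strongly nilpotent of index less than $p$ do the components of degree $p, 2p-1,\ldots$ vanish, making $\bullet$ equal to the pre-Lie product that the flow formula actually inverts. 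On $A/ann(p^{2k})$ the nilpotency index can be as large as $n$, so the formal identity you invoke does not hold; one must either pass to $B=p^{k}A$, where $B^{[p-1]}=0$ kills the higher components exactly (this is the paper's move), or prove quantitatively that those higher components contribute terms divisible by enough powers of $p$ to vanish in $A/ann(p^{4k})$ after the flow formula is applied --- an estimate absent from your sketch. Relatedly, the well-definedness of $c_{w}\,w(a;b)$ in $A/ann(p^{4k})$, when $w(a;b)$ is only determined modulo $ann(p^{2k})$ and $c_{w}$ carries a denominator $p^{v_{p}(i!)}$, is asserted rather than proved; in the paper this bookkeeping is exactly what Lemma \ref{33}, Theorem \ref{71}, and the double application of $\wp^{-1}$ in Part 2 of Theorem \ref{nareszcie} are for.
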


 As a corollary of Theorem \ref{main}, we obtain 
\begin{corollary}\label{uniform} Let $p>3$ be a prime number. 
 Let $A$ be a brace whose additive group is a direct sum of some number of cyclic groups of cardinality $p^{\alpha }$ for some $\alpha $.
 Suppose that $A$ satisfies property $1'$.  
 Let $k\leq {\frac \alpha {p-1}}$.  
  Then the brace $A/ann(p^{4k})= A/p^{\alpha -4k}A$ is obtained from  the left nilpotent pre-Lie ring $(A/ann(p^{2k}), +, \bullet)$ obtained in Theorem \ref{1} by a formula which depends only on the additive group of $A$. 
\end{corollary}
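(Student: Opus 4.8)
The plan is to deduce Corollary~\ref{uniform} directly from Theorem~\ref{main}: all that is needed is to check that the hypotheses of that theorem hold under the homocyclic assumption on $(A,+)$, and then to rewrite the annihilator subgroups that appear in its conclusion. Writing $(A,+)\cong(\mathbb{Z}/p^{\alpha}\mathbb{Z})^{m}$, so that $|A|=p^{m\alpha}$ is a power of $p$, one has $ann(p^{i})=p^{\alpha-i}A$ for $0\le i\le\alpha$ and $ann(p^{i})=A$ for $i\ge\alpha$; in particular $p^{\alpha}A=0$. The numerical hypotheses relating $k$, $\alpha$ and $p$ — together with $p>3$, hence $p-1\ge4$ — are exactly what is needed so that $p^{k(p-1)}A=0$ and $4k\le\alpha$, whence Theorem~\ref{main} applies to $A$ with parameter $k$, and $ann(p^{4k})=p^{\alpha-4k}A$, $ann(p^{2k})=p^{\alpha-2k}A$.

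The only hypothesis of Theorem~\ref{main} not already assumed in the corollary is property~$1''$, and the substantive point is to derive it from property~$1'$ using the homocyclicity of $(A,+)$. Here I would use that in a left brace the map $b\mapsto a*b$ is additive, hence $\mathbb{Z}$-linear. Fix $i$ with $1\le i\le\alpha$ and take $b\in ann(p^{i})=p^{\alpha-i}A$, say $b=p^{\alpha-i}c$ with $c\in A$. Applying $\mathbb{Z}$-linearity repeatedly, from the innermost factor outward, gives
\[ a*(a*\cdots*(a*b))=p^{\alpha-i}\bigl(a*(a*\cdots*(a*c))\bigr), \]
with $a$ occurring $\lfloor(p-1)/4\rfloor$ times in each expression. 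By property~$1'$ the bracketed element on the right lies in $pA$, hence the element on the left lies in $p^{\alpha-i+1}A=ann(p^{i-1})$; this is exactly property~$1''$. The remaining cases are trivial: when $i=\alpha$ the claim reduces to property~$1'$, and when $i>\alpha$ both $ann(p^{i})$ and $ann(p^{i-1})$ equal $A$.

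Having checked the cardinality, exponent and $1'$/$1''$ hypotheses, Theorem~\ref{main} yields that $A/ann(p^{4k})$ is obtained from the left nilpotent pre-Lie ring $(A/ann(p^{2k}),+,\bullet)$ of Theorem~\ref{1} by an algebraic formula depending only on $(A,+)$; substituting $ann(p^{4k})=p^{\alpha-4k}A$ gives the stated identification $A/ann(p^{4k})=A/p^{\alpha-4k}A$, completing the proof. The only genuine obstacle is the passage from $1'$ to $1''$ — concretely, making sure the $\mathbb{Z}$-linearity rewriting above goes through for precisely the length $\lfloor(p-1)/4\rfloor$ of iterated $*$-product that appears in the statements of both properties; the rest is bookkeeping with the exponents of the homocyclic group.
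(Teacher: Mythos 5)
Your proposal is correct and follows essentially the same route as the paper: the paper's proof consists of invoking Theorem~\ref{main} together with Lemma~\ref{s}, whose content is exactly your key step that for a homocyclic additive group $ann(p^{i})=p^{\alpha-i}A$, so property $1''$ follows from property $1'$ by pulling the factor $p^{\alpha-i}$ out of the iterated $*$-product via $\mathbb{Z}$-linearity of $b\mapsto a*b$. Your additional bookkeeping ($k(p-1)\le\alpha$ gives $p^{k(p-1)}A=0$, and $p>3$ gives $4k\le\alpha$) is implicit in the paper and correctly spelled out.
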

  Corollary \ref{uniform} can be  applied to braces $A/p^{\alpha }A$ obtained in the following lemma.

\begin{lemma}\label{7} Let $p$ be a prime number.  Let $A$ be a brace whose additive group is isomorphic to $C_{p^{i_{1}}}\oplus \cdots \oplus C_{p^{i_{\beta }}}$ for some $\beta $ and some $i_{1}, \ldots , i_{t}$ and let $\alpha \leq i_{1}, \ldots , k\leq i_{t}$. Assume that $p^{\alpha }A$ is an ideal in $A$ (this happens for example when $A$ satisfies property $1'$), then the additive group of brace  $A/p^{\alpha }A$ is isomorphic to $C_{p^{\alpha }}^{\oplus \beta }=C_{p^{\alpha }}\oplus \cdots \oplus C_{p^{\alpha }}$. Consequently  the additive group of brace  
$A/p^{\alpha }A$ is uniform.
\end{lemma}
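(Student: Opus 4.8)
The plan is to reduce this statement to elementary abelian $p$-group theory, since the brace multiplication plays no role in computing the \emph{additive} group of $A/p^{\alpha}A$: the set $p^{\alpha}A=\{p^{\alpha}a:a\in A\}$ is automatically a subgroup of the abelian group $(A,+)$, being the image of the group endomorphism ``multiplication by $p^{\alpha}$''. The hypothesis that $p^{\alpha}A$ is an \emph{ideal} is needed only to ensure that the quotient carries again a brace structure, not for the additive computation.

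First I would fix a decomposition $(A,+)=\langle e_{1}\rangle\oplus\cdots\oplus\langle e_{\beta}\rangle$ with $e_{j}$ of additive order $p^{i_{j}}$, reading the hypothesis as $\alpha\leq i_{j}$ for every $j$ (the displayed list $\alpha\leq i_{1},\ldots ,k\leq i_{t}$ is evidently meant to say $\alpha\leq i_{j}$ for all $j=1,\ldots ,\beta$). Since multiplication by $p^{\alpha}$ is additive,
\[
p^{\alpha}A=p^{\alpha}\langle e_{1}\rangle\oplus\cdots\oplus p^{\alpha}\langle e_{\beta}\rangle=\langle p^{\alpha}e_{1}\rangle\oplus\cdots\oplus\langle p^{\alpha}e_{\beta}\rangle .
\]
Because $i_{j}\geq\alpha$, the element $p^{\alpha}e_{j}$ has additive order $p^{\,i_{j}-\alpha}$, so $\langle p^{\alpha}e_{j}\rangle$ is precisely the unique subgroup of $\langle e_{j}\rangle\cong C_{p^{i_{j}}}$ of index $p^{\alpha}$; hence $\langle e_{j}\rangle/\langle p^{\alpha}e_{j}\rangle\cong C_{p^{\alpha}}$.

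Next I would pass to the quotient, using that forming quotients commutes with finite direct sums (if $G=\bigoplus G_{j}$ and $H_{j}\leq G_{j}$ then $G/\bigoplus H_{j}\cong\bigoplus G_{j}/H_{j}$):
\[
A/p^{\alpha}A\;\cong\;\bigoplus_{j=1}^{\beta}\langle e_{j}\rangle\big/\langle p^{\alpha}e_{j}\rangle\;\cong\;\bigoplus_{j=1}^{\beta}C_{p^{\alpha}}\;=\;C_{p^{\alpha}}^{\oplus\beta},
\]
which is the asserted isomorphism. Finally, a finite abelian $p$-group which is homocyclic — that is, isomorphic to $C_{p^{\alpha}}^{\oplus\beta}$, equivalently a free $\mathbb{Z}/p^{\alpha}\mathbb{Z}$-module — is uniform, so the additive group of $A/p^{\alpha}A$ is uniform, completing the argument.

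Since the whole statement is essentially the structure theory of finite abelian $p$-groups, I do not expect a genuine obstacle; the only points requiring care are the bookkeeping around the index hypothesis and the observation that the ``ideal'' assumption on $p^{\alpha}A$ is invoked only to make $A/p^{\alpha}A$ into a brace, and is irrelevant to the additive computation itself.
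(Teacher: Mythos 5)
Your proposal is correct, and it is in fact more complete than what the paper provides: the paper's entire proof of this lemma is the single line ``This follows from Proposition \ref{333}'', which only addresses the parenthetical claim that property $1'$ forces $p^{\alpha}A$ to be an ideal (so that the quotient brace exists); the additive isomorphism $A/p^{\alpha}A\cong C_{p^{\alpha}}^{\oplus\beta}$ is left entirely implicit. Your elementary computation --- decomposing $(A,+)$ into cyclic summands, noting that multiplication by $p^{\alpha}$ is an additive endomorphism acting componentwise, and passing to the quotient summand by summand using $i_{j}\geq\alpha$ --- is exactly the argument needed, and your observation that the ideal hypothesis plays no role in the additive computation (only in endowing $A/p^{\alpha}A$ with a brace structure) is accurate. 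Your reading of the garbled hypothesis ``$\alpha\leq i_{1},\ldots ,k\leq i_{t}$'' as ``$\alpha\leq i_{j}$ for all $j$'' is also the only sensible interpretation, and it is genuinely needed: if some $i_{j}<\alpha$ the corresponding summand of the quotient would be $C_{p^{i_{j}}}$ rather than $C_{p^{\alpha}}$ and the conclusion would fail. There is no gap.
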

 In our proofs we apply methods from noncommutative ring theory.  Methods from noncommutative ring theory were also used in several other papers on related topics \cite{an, al, cjo, Rio, CO,  Facchini, IvanK, Iyudu, Dora}. In particular in \cite{Facchini} Michaela Cerqua and Alberto Facchini obtained very interesting results about  pre-Lie rings and braces, by using  methods from noncommutative ring theory. In \cite{KS} Kurdachenko and Subbotin developed  new methods to investigate braces with given  nilpotency index. 

Braces were introduced in 2007 by Wolfgang Rump \cite{rump} to describe all involutive non-degenerate set-theoretic solutions of the Yang-Baxter equation. 
The connection between pre-Lie rings and braces was first investigated  in \cite{Rump}, where a method for the construction of braces from left nilpotent pre-Lie rings was described.
 Some very interesting results about connections between braces and pre-Lie rings in the context of Lazard's correspondence and in the context of Lie groups  appeared recently in  \cite{ Rump, BA, doikou, Magnus, Sheng, ST}. 
 Most of these papers used an approach coming from areas such as algebraic geometry, Lie groups, Lie algebras, Lie rings and their cohomology, Hopf-Galois extensions, Magnus expansion and quantum integrable systems. In the case of finite braces the aforementioned results were obtained in the context  of Lazard's correspondence.

 In this paper we consider braces and pre-Lie rings whose left nilpotency index is arbitrarily high; these are usually outside of the context of Lazard's correspondence. We also apply some  ideas from \cite{L, Shalev, passage, asas}.

Notice that Lie and pre-Lie rings associated to groups and braces were used in various situations. Some very important applicatons of connection between  Lie rings and  groups appeared  in the papers of Kostrykin \cite{Kostrykin} and Zelmanov \cite{Zelmanov} in connection with the Burnside problem. Shalev used Lie rings associated to groups to investigate groups which have authomorphisms with a given number of fixed points \cite{Shalev}. Vaughan-Lee, Leedham-Green, Neuman, Wiegold  obtained  results on conjugacy classes in groups  (chapter 
$9$, \cite{Book}). Connections between Lie rings and groups, the Lazard's correspondence and Shur multipliers are used for calculating all groups of given cardinality \cite{Bettina}. Some applications of Lazard's correspondence are also mentioned in \cite{Kurdachenko, Khukhro, Efim}.

 We will now give an outline of this paper. Section $2$ contains basic background information on braces and pre-Lie rings.  In section $3$  we introduce braces satisfying property $1$ and show that $p^{i}A$ are ideals in such braces.
 In section $3$ we define a pullback function $\wp ^{-1}$ similarly as in \cite{asas}. This pullback function resembles division by $p^{k}$ for a given $k$.
 We also introduce function $f$, and prove some results about this function. In section $4$ we show that in  braces satisfying properties $1'$ and $1''$ sets $ann(p^{i})=\{a\in A: p^{i}A=0\}$ are ideals. We also prove some results about  braces satisfying properties $1'$ and $1''$. In section $5$ we associate pre-Lie rings to braces satisfying properties $1'$ and $1''$ (and also to some other types of braces). This is done in a similar way as in \cite{asas}, at the same time we use less restrictive assumptions than in \cite{asas}. We also  define operation $\odot $ for braces, in a simlar way as in \cite{asas}. In section $6$ we show that the obtained pre-Lie rings are left nilpotent, provided that the original braces satisfy properties $1'$ and $1''$. 
In section $7$ we investigate the passage brack from the obtained pre-Lie rings to factor braces $A/ann(p^{4})$, and prove the main results of the paper.

\section{Background information }  This section is similar to the introduction sections in \cite{passage, asas}. 
Recall that a   {\em pre-Lie ring} $(A, +, \cdot)$ is  an abelian group $(A,+)$ with a binary operation $(x, y) \rightarrow  x\cdot y$ such that 
\[(x\cdot y)\cdot z -x\cdot (y\cdot z) = (y\cdot x)\cdot z - y\cdot (x\cdot z)\]
and $(x+y)\cdot z=x\cdot z+y\cdot z, x\cdot (y+z)=x\cdot y+x\cdot z,$
 for every $x,y,z\in A$. A pre-Lie ring $A$  is {\em  nilpotent} or {\em strongly nilpotent}   if for some $n\in \mathbb N$ all products of $n$ elements in $A$ are zero. We say that $A$ is {\em left nilpotent} if for some $n$, we have $a_{1}\cdot (a_{2}\cdot( a_{3}\cdot (\cdots  a_{n})\cdots ))=0$ for all  $a_{1}, a_{2}, \ldots , a_{n}\in A$.

A set $A$ with binary operations $+$ and $* $ is a {\em  left brace} if $(A, +)$ is an abelian group and the following  version of distributivity combined with associativity holds.
  \[(a+b+a*b)* c=a* c+b* c+a* (b* c), \space  a* (b+c)=a* b+a* c,\]
for all $a, b, c\in A$; moreover  $(A, \circ )$ is a group, where we define $a\circ b=a+b+a* b$.
In what follows we will use the definition in terms of the operation `$\circ $' presented in \cite{cjo} (see \cite{rump}
for the original definition): a set $A$ with binary operations of addition $+$ and multiplication $\circ $ is a brace if $(A, +)$ is an abelian group, $(A, \circ )$ is a group and for every $a,b,c\in A$
\[a\circ (b+c)+a=a\circ b+a\circ c.\]
  All braces in this paper are left braces, and we will just call them braces.
 
Let $(A, +, \circ )$ be a brace.  Recall that $I\subseteq A$ is an ideal in $A$ if for
$i,j\in I$, $a\in A$ we have $i+j\in I, i-j\in I, i*a, a*i\in I$ where $a*b=a \circ b-a-b$.

Let $A$ be a brace, 
for any natural number $i$ we denote $p^{i}A=\{ p^{i}a: a\in A\}$. 

Let $A$ be a brace and $a\in A$ then we denote $a^{\circ j}=a\circ a\cdots \circ a$
  where $a$ appears $j$ times in the product on the right hand side.  By $A^{\circ p^{i}}$ we will   denote the subgroup of the multiplicative group $(A, \circ)$ of brace $A$ generated by $a^{\circ p^{i}}$ for $i\in A$. 
  In general for a set $S\subseteq A$, by $S^{\circ {p^{i}}}$ we denote the subgroup of $(A, \circ )$ generated by elements $s^{\circ {p^{i}}}$ for $s\in S$. 

 Let $(A, +, \circ )$ be a brace. One of the mappings used in connection with braces are the  maps $\lambda _{a}:A\rightarrow A$ for $a\in A$. Recall that  for $a,b, c\in A$ 
   we have $\lambda _{a}(b) = a \circ  b - b$, $\lambda _{a\circ c}(b) = \lambda _{a}(\lambda _{c}(b))$. 

\section{Braces  in which $p^{i}A$ are ideals}

 In this section we introduce braces which satisfy Property $1$. 

\begin{definition}\label{property1}
  Let  $A$ be a brace of cardinality $p^{n}$ for a prime number $p>2$ and a natural number $n$. We say that $A$ satisfies Property $1$ if  
\[a*(a*(a\cdots *(a*b)\cdots ))\in pA\] where $a$ appears ${\frac{p-1}2}$ times in this expression. 
Moreover, we assume that 
\[a*(a*(a\cdots *(a*a)\cdots ))\in pA\] where $a$ appears ${\frac{p-1}2}$ times in this expression. 
\end{definition}

 This property holds for example when 
$A^{\frac{p-1}2}\in pA$.

We recall Lemma $14$ from \cite{note}. 

\begin{lemma}\label{14} (Lemma $14$, \cite{note}).
 Let $(A, +, \circ )$ be a brace and let $a,b\in A$. Denote $a*b=a\circ b-a-b$. 
Denote $e_{1}'(a,b)=a*b$, $e_{i}(a)=a$ and inductively define 
\[e_{i+1}'(a,b)=a*e_{i}'(a,b), e_{i+1}(a)= a*e_{i}'(a).\] Then, for every natural number $j$:
\[a^{\circ j}=\sum_{i=1}^{j}{j \choose i} e_{j}'(a)\] 
\[ a^{\circ j}*b=\sum_{i=1}^{j}{j\choose i}\sigma _{j}e_{i}'(a,b).\]
\end{lemma}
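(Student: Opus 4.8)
The plan is to prove both identities by induction on $j$, the essential input being the brace axiom $(a+c+a*c)*b = a*b + c*b + a*(c*b)$ — equivalently $(a\circ c)*b = a*b + c*b + a*(c*b)$ — together with left distributivity $a*(x+y)=a*x+a*y$ and Pascal's rule $\binom{j}{i}+\binom{j}{i-1}=\binom{j+1}{i}$. (I read the statement with the evident corrections: $e_i(a)$ denotes the left-normed product with $i$ copies of $a$, so $e_1(a)=a$ and $e_{i+1}(a)=a*e_i(a)$; $e_i'(a,b)$ denotes $a*(a*(\cdots*(a*b)))$ with $i$ copies of $a$, so $e_{i+1}'(a,b)=a*e_i'(a,b)$; the first sum should read $\sum_{i=1}^{j}\binom{j}{i}e_i(a)$, and the stray $\sigma_j$ in the second sum should be dropped.)

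For the base case $j=1$ both sides reduce to the definitions: $a^{\circ 1}=a=e_1(a)$ and $a^{\circ 1}*b=a*b=e_1'(a,b)$. For the inductive step of the first identity I would write $a^{\circ(j+1)}=a\circ a^{\circ j}=a+a^{\circ j}+a*a^{\circ j}$, substitute the inductive hypothesis for $a^{\circ j}$, and push $a*(-)$ through the sum by left distributivity; since $a*e_i(a)=e_{i+1}(a)$ this turns $a*a^{\circ j}$ into $\sum_{i=1}^{j}\binom{j}{i}e_{i+1}(a)$. Collecting the term $a=e_1(a)$, the unshifted sum, and the index-shifted sum, the coefficient of each interior $e_i(a)$ becomes $\binom{j}{i}+\binom{j}{i-1}$, with the two end coefficients equal to $1=\binom{j+1}{1}$ and $\binom{j}{j}=\binom{j+1}{j+1}$, and Pascal's rule delivers $\sum_{i=1}^{j+1}\binom{j+1}{i}e_i(a)$.

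The second identity is handled the same way: writing $a^{\circ(j+1)}=a\circ a^{\circ j}$ and applying the brace identity with $c=a^{\circ j}$ gives $a^{\circ(j+1)}*b = a*b + a^{\circ j}*b + a*(a^{\circ j}*b)$; the inductive hypothesis rewrites the middle term, left distributivity together with $a*e_i'(a,b)=e_{i+1}'(a,b)$ rewrites the last term as $\sum_{i=1}^{j}\binom{j}{i}e_{i+1}'(a,b)$, and the identical Pascal bookkeeping yields $\sum_{i=1}^{j+1}\binom{j+1}{i}e_i'(a,b)$. (One could alternatively deduce the first identity from the second by expanding $a^{\circ j}=a^{\circ(j-1)}\circ a = a^{\circ(j-1)}+a+a^{\circ(j-1)}*a$ and using $e_{i+1}(a)=e_i'(a,a)$, but the two parallel inductions are cleaner.) There is no genuine obstacle here: the result is purely formal, following from the single brace axiom and additive left distributivity, so the only thing to get right is the index shift and the merging of the two sums via Pascal's rule — the content lies entirely in that bookkeeping, which is why this appears as a cited lemma rather than a step requiring new ideas.
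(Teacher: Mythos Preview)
Your induction argument is correct and is exactly the standard proof of this identity; the paper itself does not prove this lemma but merely quotes it from \cite{note}, so there is nothing to compare against. Your reading of the typos (the intended $e_i(a)$ in the first sum and the spurious $\sigma_j$ in the second) is also correct.
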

 Our first result is as follows.
\begin{proposition}\label{333}
Let $A$ be a brace of cardinality $p^{n}$, where $p>2$ is a prime number and $n$ is a natural number. 
 Suppose that $(\lambda _{a}-I)^{\frac{p-1}2}(b)\in pA$ for all $a,b\in A$. 
 Then $p^{i}A$ is an ideal in brace $A$ for $i=1,2, \ldots $.

  Define  $E_{0}=A$, $E_{1}=A^{\circ p}$, and inductively $E_{i+1}=(E_{i})^{\circ p}$, so $E_{i+1}$ is the subgroup of $(A, \circ)$ generated by $p$-th powers of elements from $E_{i}$.Then $p^{i}A=E_{i}$ for $i=1,2, \ldots $.
\end{proposition}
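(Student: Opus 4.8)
The plan is to prove the two assertions simultaneously by induction on $i$, using Lemma~\ref{14} as the main computational engine together with the hypothesis $(\lambda_a - I)^{\frac{p-1}{2}}(b) \in pA$. First I would record the key translation: since $\lambda_a(b) - b = a*b$, the operator $(\lambda_a - I)^m$ applied to $b$ is exactly $e'_{m+1}(a,b)$ in the notation of Lemma~\ref{14} (with $e'_1(a,b) = a*b$). So the hypothesis says $e'_{m}(a,b) \in pA$ for all $m \geq \frac{p-1}{2}+1$, and likewise $e_{m}(a) \in pA$ for such $m$. The point of the bound $\frac{p-1}{2}$ is that in the binomial expansion $a^{\circ p}*b = \sum_{i=1}^{p}\binom{p}{i}\sigma_p e'_i(a,b)$ and $a^{\circ p} = \sum_{i=1}^{p}\binom{p}{i}e'_i(a)$, every coefficient $\binom{p}{i}$ for $1 \leq i \leq p-1$ is divisible by $p$, while the single surviving term $i = p$ has $e'_p$ which, since $p > \frac{p-1}{2}+1$ for $p > 3$ (and $p=3$ handled directly), already lies in $pA$ by hypothesis; hence $a^{\circ p} \in pA$ and $a^{\circ p}*b \in pA$, where $\sigma_p$ denotes the appropriate product/composition of $\lambda$-maps that one checks also preserves $pA$.

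Next I would establish the base case $E_1 = pA$. The inclusion $E_1 \subseteq pA$ follows from the computation above: each generator $a^{\circ p}$ of $E_1$ lies in $pA$, and since (as one checks) $pA$ is closed under $\circ$ — indeed $x \circ y = x + y + x*y$ and if $x,y \in pA$ then $x*y$ needs to be shown in $pA$, which is where one uses that $a*b$ expressions involving elements of $pA$ stay in $pA$, a consequence of the brace distributivity laws and the fact that $p(a*b) = (pa)*b = a*(pb)$ — we get that the subgroup generated by the $a^{\circ p}$ sits inside $pA$. For the reverse inclusion $pA \subseteq E_1$: given $pa$, expand $a^{\circ p} = pa + \sum_{i\geq 2}\binom{p}{i}e'_i(a) \pmod{\text{lower terms}}$; more carefully, $a^{\circ p} - pa \in p(pA) \subseteq pA$ by the coefficient analysis, and then one solves for $pa$ inside the group $E_1$ by a descending argument, using that $p^{n}A = 0$ so the correction terms eventually vanish. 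This "solve for $pa$ by successive approximation" step is the technical heart of the base case.

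Then the inductive step: assuming $p^{i}A = E_i$, I want $p^{i+1}A = E_{i+1} = (E_i)^{\circ p}$. Applying the base-case argument to the brace $A$ but now restricting attention to the subgroup/ideal $p^{i}A$ (which by the first part of the proposition is an ideal, so it is itself a brace), the same computation gives $(p^{i}A)^{\circ p} = p(p^{i}A) = p^{i+1}A$; and $(p^{i}A)^{\circ p} = (E_i)^{\circ p} = E_{i+1}$ by the inductive hypothesis. One must check that "$p$-th power inside the sub-brace $p^iA$" agrees with "$p$-th power in $A$", which is immediate since $\circ$ is inherited. Finally, the ideal claim: $p^iA$ is an additive subgroup automatically; it is closed under $a \mapsto a*x$ and $a \mapsto x*a$ because $p^i(a*x) = (p^i a)*x$ and $= a*(p^i x)$ by left/right distributivity of $*$ over $+$ in a brace, so $p^i a \in p^iA \Rightarrow (p^ia)*x = p^i(a*x) \in p^iA$ and symmetrically; hence $p^iA$ is an ideal.

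\textbf{Main obstacle.} The delicate point I expect to fight with is the reverse inclusion $pA \subseteq E_1$ (and its inductive analogue): one has to extract $pa$ from the group generated by the $a^{\circ p}$, and the binomial expansion only gives $a^{\circ p} \equiv pa$ modulo terms that are themselves in $pA$ but not obviously in $E_1$. Resolving this requires a careful descending induction on the "$p$-adic level" using $p^{k(p-1)}A = 0$ (or just $p^nA=0$), showing that the error terms can be successively corrected by further elements of $E_1$ — essentially a Hensel-type / geometric-series argument internal to the finite group $(A,\circ)$. Keeping track of the $\sigma_j$ factors (the $\lambda$-map products appearing in Lemma~\ref{14}) and verifying they preserve $pA$ is the bookkeeping one must not get wrong.
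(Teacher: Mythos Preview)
Your plan has the right overall architecture (Lemma~\ref{14} plus a successive-approximation argument), but there is a genuine conceptual error that breaks several steps: you repeatedly use the identity $(p a)*b = p(a*b)$, and more generally treat $*$ as right-distributive over $+$. In a left brace only $a*(b+c)=a*b+a*c$ holds; the expression $(a+b)*c$ is \emph{not} $a*c+b*c$ in general (this is exactly why Lemma~\ref{42} and Corollary~\ref{43} in the paper exist). Consequently your ``ideal claim'' paragraph fails: the direction $(p^{i}a)*b\in p^{i}A$ does not follow from distributivity and is in fact the nontrivial half. Likewise your argument that $pA$ is closed under~$\circ$ (``$p(a*b)=(pa)*b=a*(pb)$'') collapses, and with it the inductive step that treats $p^{i}A$ as a sub-brace to which the base case can be re-applied. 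The paper handles this by proving, as part of the inclusion $E_{i}\subseteq p^{i}A$, the \emph{stronger} statement that $c\in E_{i}$ implies $c*b\in p^{i}A$ for all $b\in A$; this uses $\lambda_{c_{1}\circ c_{2}}=\lambda_{c_{1}}\lambda_{c_{2}}$ to pass from generators $a^{\circ p}$ to arbitrary elements of $E_{i}$, and only then deduces the ideal property in Part~3.

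A secondary issue is your filtration for the reverse inclusion. The claim $a^{\circ p}-pa\in p(pA)=p^{2}A$ is false: the term $\binom{p}{2}e_{2}(a)=\tfrac{p(p-1)}{2}(a*a)$ lies in $pA$ but not in $p^{2}A$ in general. What is true is that $a^{\circ p}-pa\in pA^{2}+p^{2}A$, and the relevant descending filtration is $G_{j}=p^{i-1}A^{j}+p^{i}A$, which terminates because $A^{n+1}=0$ (Rump), not because of $p$-adic depth alone. The paper packages this as ``Fact~1'' and then runs a reverse induction on $i$ (from $i=n$ down), showing $p^{i}A\subseteq (p^{i-1}A)^{\circ p}$; your forward-induction-on-sub-braces scheme would work too, but only after the ideal property is correctly established, which as noted requires the $E_{i}$ analysis rather than nonexistent right distributivity.
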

\begin{proof} 
{\em Part 1.}

We will first show that if $c\in E_{i}$ then $c *b \in p^{i}A$ and $c\in p^{i}A$. 
 We proceed by induction on $i$. For $i=1$  we have $c\in E_{1}=A^{\circ p}$, so $c=a_{1}^{\circ p}\circ \cdots \circ a_{m}^{\circ p}$ for some $a_{1}, \ldots , a_{m}\in A$. The result then follows from Lemma $14$ in \cite{note} and from the fact that $\lambda _{a^{\circ p}}\lambda_{d^{\circ p}}=\lambda _{a^{\circ p}\circ d^{\circ p}}$ for $a,d\in A$ (and also because $(\lambda _{a}-I)^{p-1}(a)\in pA$ by assumption).

$ $

 Suppose that the result is true for some $i\geq 1$, so   for  all \[c\in E_{i}\] we have  \[c*b, c\in p^{i}A.\]
 We need to show that for all  $d\in  E_{i+1}$ we have $d*b\in p^{i+1}A$   and $d\in p^{i+1}A$. Observe that  it suffices to show that \[c^{\circ p}*b=\lambda _{c^{\circ p}}(b)-b\in p^{i}A\] and $c^{\circ p}\in p^{i}A$, because $\lambda _{a^{\circ p}}\lambda_{d^{\circ p}}=\lambda _{a^{\circ p}\circ d^{\circ p}}$, and because elements $c^{\circ p}$ for $c\in E_{i}$ generate $E_{i+1}$ (under the operation $\circ $).
 By Lemma \ref{14} we get 
$c^{\circ p}*b=\sum_{j=1}^{p}{p\choose j}e_{j}$ where $e_{1}=c*b$, and $e_{j+1}=c*e_{j}$.
Notice that $\sum_{j=1}^{p-1}{p\choose j}e_{j}\in p^{i+1}A$ since ${p \choose j}$ is divisible by $p$ for $0<j<p$ and $e_{j}\in p^{i}A$ since $c*b\in p^{i}A$ by the inductive assumption. So it suffices to show that $e_{p}\in p^{i+1}A$. Observe that \[e_{p}=c*(c*\cdots *e_{{\frac {p-1}2}})\in c*( \cdots *(c* (p^{i}A) ))\in p^{i}c*(\cdots c*(A))\subseteq p^{2i}A\subseteq p^{i+1}A,\] as required.

Similarly, observe that by Lemma $14$ from \cite{note} we get that $c^{\circ p}\in p^{i+1}A$.

$ $

{\em Part 2.}  We will now show that $p^{i}A\subseteq  E_{i}$ for $i=1,2, \ldots $.
 We will first proof Fact $1$. 

$ $

{\em Fact $1$.} Fix a natural number $i>0$. 
Denote \[G_{1}=p^{i-1}A\] 
 and inductively \[G_{j}=p^{i-1}A^{j}+p^{i}A.\]
 Assume that $c\in G_{j}$ then  \[(-c)^{\circ p}\circ (pc)\in pG_{j+1},\] for all $j$.  

{\em Proof of Fact 1.} For $j=1$ we have $c\in G_{1}=p^{i-1}A$ and
 we need to show that  $(-c)^{\circ p}\circ (pc)\in pG_{2}=p^{i}A^{2}+p^{i+1}A$.
 Lemma \ref{14} yields   
\[ (-c)^{\circ p}\circ (pc)=(-c)^{\circ p}+pc+ (-c)^{\circ p}*(pc)\in (-c)*(\cdots ((-c)*(-c)))+p^{i}A^{2}+p^{i+1}A\subseteq pG_{2}, \]
 where $c$ appears $p$ times in the product on the right hand side. This follows by 
 since  $ (-c)*(\cdots ((-c)* (-c)))\in ((-c)*\cdots *(p^{i-1}A))\subseteq p^{i-1}p^{2}A\subseteq pG_{2}$ (where $c$ appears $p$ times in the product).
 This shows that Fact $1$ is true for $j=1$.

Suppose that Fact $1$ holds for all numbers smaller than $j$, we then prove it for $j$.  
 Let $c\in G_{j}$. We need to show that  $(-c)^{\circ p}\circ (pc)\in pG_{j+1}$.
 Since $c\in G_{j}$, we have  \[c=p^{i-1}b+p^{i}e\] for some $b\in A^{j}, e\in A$. We calculate $(-c)^{\circ p}\circ (pc)=(-c)^{\circ p}+ pc  +(-c)^{\circ p}*(pc).$

 Note that \[(-c)^{\circ p}*(pc)=p^{i}((-c)^{\circ p}*b)+p^{i+1}((-c)^{\circ p}*e).\]
 Note that $p^{i+1}((-c)^{\circ p}*e)\in p^{i+1}A\subseteq pG_{j+1}$ and by Lemma \ref{14} we have   
$p^{i}((-c)^{\circ p}*(b))\in 
p^{i}A^{j+1}\subseteq pG_{j+1}$. 

 Reasoning similarly as above,  we have for $e_{1}=c$ and $e_{i+1}=(-c)*e_{i}$: 
 \[(-c)^{\circ p}+pc= \sum_{i=2}^{p-1}{p\choose i}e_{i}+e_{p}\in p^{i}A^{j+1}+p^{i+1}A\subseteq pG_{j+1},\] since $e_{1}\in p^{i-1}A^{j}+p^{i}A$. 
This concludes the proof of Fact $1$.

$ $

We will now show that $p^{i}A\in E_{i}$ for each $i$. It suffices to show that $p^{i}A\subseteq (p^{i-1}A)^{\circ p}$ for $i=1, 2, \ldots $.

We will show this by induction on $i$ in the reverse order, that is starting from $i=n$ and then proving it for $i=n-1, n-2 , \ldots $. 
 For $i=n$ we have $p^{n}A=0$ so $p^{n}A=0\subseteq (p^{i-1}A)^{\circ p}$.

 Let $0<i<n$ and suppose that the result holds for $i+1$, so \[p^{i+1}A\subseteq (p^{i}A)^{\circ p}.\]
 We need to  show that \[p^{i}A\in (p^{i-1}A)^{\circ p}.\]
  
Let $e\in p^{i}A$, then $e=p^{i}a$  for some $a\in A$, we will show that $e=c_{1}^{\circ p}\circ \cdots\circ c_{m}^{\circ p}$ for some $c_{1}, \ldots , c_{m}\in p^{i-1}A$. 

  Observe first that \[(-p^{i-1}a)^{\circ p}\circ (p^{i}a)=(-p^{i-1}a)^{\circ p}+ (p^{i}a)+ (-p^{i-1}a)^{\circ p}*(p^{i}a)= pa_{2}\] for some $a_{2}\in G_{2}$ by Fact $1$, since $p^{i-1}a\in p^{i-1}A=G_{1}$.

 Observe also that  $(-a_{2})^{\circ p}\circ (pa_{2})=pa_{3}$ for some $a_{3}\in G_{3}$ by Fact $1$. Continuing in this way we get elements $a_{3}\in G_{3}, a_{4}\in G_{4}, \ldots a_{n+1}\in G_{n+1}$. Observe that $G_{n+1}= p^{i-1}A^{n+1}+p^{i}A\in p^{i}A$, since $p^{n+1}A=0$ by Rump's result \cite{Rump}. Therefore, by the inductive assumption we have $pa_{n+1}\in
 p^{i+1}A\subseteq (p^{i}A)^{\circ p}$. 
 
 By the construction we get
 \[(-a_{n})^{\circ p}\circ (-a_{n-1})^{\circ {p}}\circ \cdots \circ (-a_{1})^{\circ p}\circ (pa_{1})
=pa_{n+1}\in (p^{i}A)^{\circ p}.\]

 Let $e_{i}\circ a_{i}=0$, so $e_{i}=a_{i}^{-1}\in G_{i}\subseteq p^{i-1}A$ then  
\[a\in  e_{n-1}^{\circ p}\circ \cdots \circ e_{1}^{\circ p}\circ (p^{i}A)^{\circ p}\in (p^{i-1}A)^{\circ p}.\] 
 This concludes the proof that  $p^{i}A\subseteq (p^{i-1}A)^{\circ p}$. Observe that this implies that $pA\subseteq A^{\circ p}=E_{1}$, $p^{2}A\subseteq E_{1}^{\circ p}=E_{2}$, and continuing in this way we get  $p^{i}A\subseteq (p^{-i-1}A)^{\circ p}=E_{i+1}^{\circ }= E_{i}$. 

$ $

{\em Part 3.} We will now show that $p^{i}A$ is an ideal in $A$ for each $i$.
 Note that $a*(p^{i}b)=p^{i}(a*b)$ and since $p^{i}A\subseteq E_{i}$ then $(p^{i}a)*b\subseteq p^{i}A$ by part 1 of this proof. 
\end{proof}

\subsection{Properties $1'$ and $1''$}

The aim of this section is to show that, with the assumptions from the previous section, it is also possible to prove that $p^{i}A=A^{\circ p^{i}}$. This  can  then be used in the subsequent sections.
 We also prove one similar property which is also useful for the subsequent applications.

 In this section we investigate braces which satisfy property $1'$ and property $1''$ which were introduced in the introduction section.

{\bf Proof of Lemma \ref{7}.} This follows from Proposition \ref{333}.

\subsection{ Some supporting lemmas  }\label{pullback}

In this section we slightly modify the definition of a pullback from \cite{asas}. The idea of applying the pullback function $\rho ^{-1}$ was inspired by papers \cite{L, Shalev} where another formula depending on $p$, namely $(a^{p}\circ b^{p})^{1/p}$, appeared in the context of group theory.
Let $A$ be a brace of cardinality $p^{n}$ for some prime number $p$ and some natural number $n$. Fix a natural number $k$.  
For $a\in p^{k}A$ let $\wp^{-1}(a)$ denote an element $x\in A$ such that $p^{k}x=a$. Such an element may not be uniquely determined in $A$, but we can fix for every $a\in p^{k}A$ such an element $\wp^{-1}(a)\in A$.
Notice that $p^{k}(\wp^{-1}(a))=p^{k}x=a$.

 For $A$ as above, $ann(p^{i})=\{a\in A:p^{i}A=0\}$. 

  Recall that if $I$ is an ideal in the brace $A$ then the factor brace $A/I$ is well defined.
The elements of the brace $A/I$ are cosets $[a]_{I}:= a + I=\{a+i:i\in I\}$ where $a \in A$, which we will simply denote by $[a]_{I}$, so $[a]_{I} =[b]_{I}$ if and only if $a-b\in I$.

\begin{lemma}\label{33} Let $A$ be a brace and let $I$ be an ideal in $A$. Let $A/I$ be defined as above. Assume that $ann(p^{k})\subseteq I$. 
Let $\wp^{-1} : p^{k}A \rightarrow A$ be defined as above. Then, for $a, b\in p^{k}A$ we have
$[\wp^{-1}(a)]_{I} + [\wp^{-1}(b)]_{I} = [\wp^{-1}(a + b)]_{I}.$ 
This implies that for any integer $m$ we have
 $[m \wp^{-1}(a)]_{I} = [\wp^{-1}(ma)]_{I}.$
\end{lemma}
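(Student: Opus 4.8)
The plan is to prove the additivity statement $[\wp^{-1}(a)]_I + [\wp^{-1}(b)]_I = [\wp^{-1}(a+b)]_I$ directly, by showing that the two elements $\wp^{-1}(a) + \wp^{-1}(b)$ and $\wp^{-1}(a+b)$ of $A$ differ by an element of $ann(p^k)$, and then invoking the hypothesis $ann(p^k) \subseteq I$ to conclude that their cosets in $A/I$ coincide. First I would write $x = \wp^{-1}(a)$, $y = \wp^{-1}(b)$, $z = \wp^{-1}(a+b)$, so that by the defining property of $\wp^{-1}$ we have $p^k x = a$, $p^k y = b$, and $p^k z = a+b$. Then, using only that $(A,+)$ is an abelian group, $p^k(x + y - z) = p^k x + p^k y - p^k z = a + b - (a+b) = 0$, so $x + y - z \in ann(p^k)$. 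Since $ann(p^k) \subseteq I$, this gives $x + y - z \in I$, which by the definition of cosets in $A/I$ means exactly $[x+y]_I = [z]_I$, i.e. $[\wp^{-1}(a)]_I + [\wp^{-1}(b)]_I = [\wp^{-1}(a+b)]_I$. (I should note that $a,b,a+b$ all lie in $p^kA$, which is needed for $\wp^{-1}$ to be defined on them; $p^kA$ is closed under addition since it is the image of the additive endomorphism $x \mapsto p^k x$.)

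For the second assertion, that $[m\,\wp^{-1}(a)]_I = [\wp^{-1}(ma)]_I$ for every integer $m$, I would argue by induction on $m \geq 0$ using the additivity just established, and then extend to negative $m$. The case $m = 0$ is the observation that $p^k \cdot 0 = 0$ so $\wp^{-1}(0) \in ann(p^k) \subseteq I$, whence $[\wp^{-1}(0)]_I = [0]_I = [0 \cdot \wp^{-1}(a)]_I$. For the inductive step, $[(m+1)\wp^{-1}(a)]_I = [m\,\wp^{-1}(a)]_I + [\wp^{-1}(a)]_I = [\wp^{-1}(ma)]_I + [\wp^{-1}(a)]_I = [\wp^{-1}(ma + a)]_I = [\wp^{-1}((m+1)a)]_I$, where the second equality uses the inductive hypothesis and the third uses the additivity statement applied to $ma$ and $a$ (both in $p^kA$). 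For negative $m$, I would either run the same induction downward or note that $[\wp^{-1}(a)]_I + [\wp^{-1}(-a)]_I = [\wp^{-1}(0)]_I = [0]_I$, so $[\wp^{-1}(-a)]_I = -[\wp^{-1}(a)]_I$, and combine this with the non-negative case.

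I do not expect any serious obstacle here: the lemma is essentially the statement that $\wp^{-1}$, although only a set-theoretic section of multiplication by $p^k$ and hence not well-defined as a map into $A$, becomes a well-defined additive (indeed $\mathbb{Z}$-linear) map $p^kA \to A/I$ once the ambiguity — which lives precisely in $ann(p^k)$ — is killed by passing to the quotient by an ideal containing $ann(p^k)$. The only point requiring a modicum of care is bookkeeping the domains, i.e. checking that every application of $\wp^{-1}$ is to an element genuinely in $p^kA$; this is automatic since $p^kA$ is an additive subgroup. Everything else is a one-line computation in the abelian group $(A,+)$, so the argument is short.
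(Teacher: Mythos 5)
Your proof is correct and follows essentially the same route as the paper: both arguments reduce the coset identity to showing that $\wp^{-1}(a)+\wp^{-1}(b)-\wp^{-1}(a+b)$ lies in $ann(p^{k})\subseteq I$ by multiplying through by $p^{k}$. Your explicit induction for the statement about integer multiples $m$ (which the paper leaves as an immediate consequence) is a harmless elaboration of the same idea.
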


\begin{proof} Note that $[\wp^{-1}(a)]_{I} + [\wp^{-1}(b)]_{I} = [\wp^{-1}(a + b)]_{I}$ 
       is equivalent to
   $\wp^{-1}(a)+ \wp^{-1}(b) - \wp^{-1}(a + b) \in I$,   so it suffices to show that 
   $\wp^{-1}(a)+ \wp^{-1}(b) - \wp^{-1}(a + b) \in ann(p^{k})$,
 which is equivalent to
   $p^{k}(\wp^{-1}(a)+ \wp^{-1}(b)-\wp^{-1}(a + b)) = 0.$ This in turn is equivalent to
   $a+b-(a+b) = 0,$ which holds true (since $p^{k}(\wp^{-1}(a)) = a$ for every $a\in p^{k}A$ by the definition of the function $\wp^{-1}$).
  \end{proof}

\subsection{Function $f$}
 Let $\wp^{-1}:p^{k}A\rightarrow A$ be defined as in Section \ref{pullback}. 
 Let $p$ be a prime number. Let $(A, +, \circ )$ be  a brace of cardinality $p^{n}$. Assume that $A$ satisfies property $1$. Let $a\in A$. Recall that 
  \[a^{\circ p^{k}}=\sum_{i=1}^{p^{k}}{{{p^{k}} \choose {i}}}e_{i}(a),\]
  where $e_{1}(a)=a$, $e_{2}(a)=a*a$ and $e_{i+1}(a)=a*e_{i}(a)$ for all $i$.
By Proposition \ref{333} we have $a^{\circ p^{k}}\in E_{k}\subseteq p^{k}A$. 

In this section we take for $f(a): A\rightarrow A$ to be any function such that   
\[ [f(a)]=[\wp ^{-1}(a^{\circ p^{k}})]\] where $\wp ^{-1}:p^{k}A\rightarrow A$ is defined as above.
 Then $[p^{k}f(a)]=[a^{\circ p^{k}}]$. 
 
In this section we will use the following notation:
\[[a]=[a]_{ann(p^{2k})}.\] 

\begin{theorem}\label{f(a)} Let $A$ be a brace. Assume that $ann(p^{i})$, $p^{i}A$ are ideals in $A$ for every $i$ and that $p^{i}A*ann(p^{j})\subseteq ann(p^{j-i})$ for all $j\geq i$.
 Suppose that $A$ satisfies property $1'$ and property $1''$. Then the map $[a]\rightarrow [f(a)]$ is well defined and injective. 
\end{theorem}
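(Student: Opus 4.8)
The plan is to establish the two assertions — well-definedness and injectivity of $[a]\mapsto[f(a)]$ — separately, both by reducing to computations modulo the ideals $ann(p^{j})$ and $p^{j}A$. For well-definedness, I must show that if $[a]=[b]$, i.e. $a-b\in ann(p^{2k})$, then $[f(a)]=[f(b)]$. By the defining relation $[p^{k}f(a)]=[a^{\circ p^{k}}]$ and the analogous one for $b$, together with Lemma \ref{33} (applied with $I=ann(p^{2k})$, noting $ann(p^{k})\subseteq ann(p^{2k})$), it suffices to show $[a^{\circ p^{k}}]=[b^{\circ p^{k}}]$, i.e. $a^{\circ p^{k}}-b^{\circ p^{k}}\in ann(p^{2k})$, and then to check that the pullback $\wp^{-1}$ respects this: if $x-y\in ann(p^{2k})$ with $x,y\in p^{k}A$, then $p^{k}\wp^{-1}(x)-p^{k}\wp^{-1}(y)=x-y\in ann(p^{2k})$, so $\wp^{-1}(x)-\wp^{-1}(y)\in ann(p^{3k})$; but we are taking classes mod $ann(p^{2k})$, so this is fine once $a^{\circ p^{k}}-b^{\circ p^{k}}\in ann(p^{2k})$. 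Actually I should be slightly more careful: I need $a^{\circ p^{k}}-b^{\circ p^{k}}\in p^{k}\,ann(p^{2k})\subseteq ann(p^{k})$ is too strong; the right statement is that $\wp^{-1}$ of two elements differing by an element of $ann(p^{2k})$ have classes mod $ann(p^{2k})$ that agree, which follows from Lemma \ref{33} applied to the difference. So the crux of well-definedness is the estimate $a^{\circ p^{k}}-b^{\circ p^{k}}\in ann(p^{2k})$ whenever $a-b\in ann(p^{2k})$.

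To prove that estimate I would expand both $a^{\circ p^{k}}$ and $b^{\circ p^{k}}$ using Lemma \ref{14}, writing $a^{\circ p^{k}}=\sum_{i=1}^{p^{k}}\binom{p^{k}}{i}e_{i}(a)$ and similarly for $b$. Set $c=a-b\in ann(p^{2k})$. The terms with $1\le i< p^{k-1}\cdot(\text{something})$ carry binomial coefficients divisible by a large power of $p$, which kills the $ann$-defect; more precisely $\binom{p^{k}}{i}$ is divisible by $p^{k-v_p(i)}$, and combining with property $1'$ (which forces high iterated $*$-products into $pA$) and property $1''$ (which controls how $ann$-orders drop under $*$) one gets each $\binom{p^{k}}{i}e_{i}(a)\equiv\binom{p^{k}}{i}e_{i}(b)\bmod ann(p^{2k})$. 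The top term $e_{p^{k}}$ is an iterated $*$-product of length $p^{k}$; since $p^k \gg \lfloor\frac{p-1}{4}\rfloor$, property $1'$ applied repeatedly drops it into $p^{k}A$ and then property $1''$ handles the $ann$ part, so the difference lands in $ann(p^{2k})$. This bookkeeping — tracking the interplay of the $p$-adic valuation of $\binom{p^{k}}{i}$, the $pA$-depth gained from property $1'$, and the $ann$-order lost from property $1''$ — is the main obstacle; it is essentially the same mechanism as in Proposition \ref{333} Part 1 but now keeping track of two elements simultaneously and of the $ann$-filtration rather than just the $p^iA$-filtration.

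For injectivity, suppose $[f(a)]=[f(b)]$. Then $p^{k}f(a)-p^{k}f(b)\in p^{k}\,ann(p^{2k})\subseteq ann(p^{k})$, hence $[a^{\circ p^{k}}]_{ann(p^{k})}=[b^{\circ p^{k}}]_{ann(p^{k})}$, i.e. $a^{\circ p^{k}}-b^{\circ p^{k}}\in ann(p^{k})$, so $p^{k}(a^{\circ p^{k}}-b^{\circ p^{k}})=0$, which by the identity $p^{k}x = x^{\circ p^{k}} - (\text{higher }*\text{-terms})$ type manipulations — more cleanly, using that on $E_k=p^kA$ the map $x\mapsto x^{\circ p}$ is compatible with $p$-multiplication via Lemma \ref{14} and Proposition \ref{333} — should translate into $(a^{\circ p^{k}})^{\circ p^{k}}=(b^{\circ p^{k}})^{\circ p^{k}}$ up to $ann(p^{2k})$, i.e. $a^{\circ p^{2k}}\equiv b^{\circ p^{2k}}\bmod ann(p^{2k})$. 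Now I would use that the $p^{2k}$-power map is injective modulo $ann(p^{2k})$: if $a^{\circ p^{2k}}=b^{\circ p^{2k}}$ in $A/ann(p^{2k})$ then, since $p^{2k(p-1)}$ is not quite $0$ but $p^{2k}$-torsion has been quotiented, one shows $(a\circ b^{-1})^{\circ p^{2k}}\in ann(p^{2k})$ forces $a\circ b^{-1}\in ann(p^{2k})$ — here property $1'$ again enters to control the difference between $(a\circ b^{-1})^{\circ p^{2k}}$ and the "linearized" version $p^{2k}(a-b)+(\text{correction})$. Concluding $a-b\in ann(p^{2k})$, that is $[a]=[b]$. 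The delicate point throughout is that $\wp^{-1}$ is only defined up to $ann(p^{k})$, so every equality must be read in the correct quotient and one must consistently use $ann(p^{k})\subseteq ann(p^{2k})$ and $ann(p^{i})*ann(p^{j})$-type absorption, together with the hypothesis $p^{i}A*ann(p^{j})\subseteq ann(p^{j-i})$, to move defects around; I expect the proof to proceed by a sequence of such reductions, each routine once the valuation/depth bookkeeping from the well-definedness part is in hand.
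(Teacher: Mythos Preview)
Your well-definedness argument contains a genuine error in identifying the target estimate. You claim it suffices to show $a^{\circ p^{k}}-b^{\circ p^{k}}\in ann(p^{2k})$, but this is \emph{not} enough: from $p^{k}(f(a)-f(b))=a^{\circ p^{k}}-b^{\circ p^{k}}\in ann(p^{2k})$ you only get $f(a)-f(b)\in ann(p^{3k})$, which is weaker than the required $ann(p^{2k})$ (recall $ann(p^{2k})\subseteq ann(p^{3k})$, not the other way). You half-notice this when you write ``$\wp^{-1}(x)-\wp^{-1}(y)\in ann(p^{3k})$'' but then dismiss it incorrectly. The correct target is the \emph{stronger} estimate $a^{\circ p^{k}}-b^{\circ p^{k}}\in ann(p^{k})$, which is exactly what is needed so that $p^{k}(f(a)-f(b))\in ann(p^{k})$ gives $f(a)-f(b)\in ann(p^{2k})$. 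The paper obtains this by iterating one $p$-th power at a time: from $a-b\in ann(p^{2k})$ one shows $a^{\circ p}-b^{\circ p}\in ann(p^{2k-1})$ (writing $b=a\circ c$ with $c\in ann(p^{2k})$ and using Lemma~\ref{14} together with properties $1'$, $1''$), then repeats to lose exactly one level of $ann$ per step, landing in $ann(p^{k})$ after $k$ steps. Your direct expansion of $a^{\circ p^{k}}$ by Lemma~\ref{14} with bookkeeping on $v_{p}\binom{p^{k}}{i}$ could in principle be made to reach $ann(p^{k})$, but as written it aims for the wrong (and insufficient) target.

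Your injectivity sketch has a more serious gap. After correctly arriving at $a^{\circ p^{k}}-b^{\circ p^{k}}\in ann(p^{k})$, you propose to deduce $a^{\circ p^{2k}}\equiv b^{\circ p^{2k}}\bmod ann(p^{2k})$ and then argue that the $p^{2k}$-th power map is injective modulo $ann(p^{2k})$. But the statement ``$(a\circ b^{-1})^{\circ p^{2k}}\in ann(p^{2k})$ forces $a\circ b^{-1}\in ann(p^{2k})$'' is essentially a restatement of what you are trying to prove, and you give no mechanism for it beyond vague appeals to property $1'$. The paper's argument is quite different and more concrete: it runs a descending induction on $j$, showing that $[f(a)]=[f(b)]$ together with $a-b\in p^{j}A+ann(p^{2k})$ forces $a-b\in p^{j+1}A+ann(p^{2k})$. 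The engine for this step is to test against auxiliary elements $y$: one expands $a^{\circ p^{k}}*y$ and $b^{\circ p^{k}}*y$ via Lemma~\ref{14}, compares them, and by a further (reverse) induction on the length $t$ of iterated $*$-products deduces that all products $x_{1}*(\cdots*x_{t})$ with $x_{i}\in\{a,b\}$ agree modulo $p^{j+1}A+ann(p^{2k})$; taking $t=1$ yields $a-b\in p^{j+1}A+ann(p^{2k})$. This test-element and double-induction technique is the key idea for injectivity, and it is absent from your proposal.
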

 \begin{proof} 
{\em  Part $a$}.  We will first  show that the map $[a]\rightarrow [f(a)]$ is well defined.
If $[a]=[b]$ then $a-b\in ann(p^{2k})$  hence $b=a\circ c$ for some $c\in ann(p^{2k})$.
 Recall that ${p}\choose j$ is divisible by $p$ for $j<p$.
  Let $e_{p}'(a), e_{p}'(b)$ be as in Lemma \ref{14}. By Lemma \ref{14} applied for $j=p$ and since $ ann(p^{2k})$ is an ideal in $A$ it follows that 
\[a^{\circ p}-b^{\circ p}-e_{p}'(a)+e_{p}'(b)\in p\cdot ann(p^{2k})\subseteq ann(p^{2k-1}).\]
 It follows since ${p\choose j}$ is divisible by $p$ for $0<j<p$. 

 By substituting $b=a\circ c$  in $e_{p}'(b)$ and using the fact which holds in any brace that $b*x=(a\circ c)*x=a*x+c*x+a*(c*x))$ we get that 
\[e_{p}'(b)-e_{p}'(a)\in e_{\frac {p-1}2}'(a, (e_{\frac{ p+1}2}'(b)-e_{\frac {p+1}2}'(a)))+ \sum  B*(\cdots <c>*( \cdots *(B*(e_{{\frac{ p+1}2}}'(b))))),\]
 where $<c>$ denotes the ideal generated by $c$ in $A$ and $B=\{a,c\}$.
 Observe that $e_{\frac{ p-1}2}'(b)-e_{\frac {p-1}2}'(a))\subseteq <c>\subseteq ann(p^{2k})$. 
By properties $1'$ and  $1''$ we have 
\[e_{p}'(b)-e_{p}'(a)\in e_{\frac {p-1}2}'(a, ann(p^{2k}))+p\cdot ann(p^{2k})\subseteq ann(p^{2k-1}),\]
 where $e_{\frac {p-1}2}'(a, ann(p^{2k}))$ is as in Lemma \ref{14}.
  Consequently, 
\[a^{\circ p}-b^{\circ p}\in  ann(p^{2k-1}).\]
By using the same argument again
for \[a'=a^{\circ p}, b'=b^{\circ p}, b'=a'\circ c'\]
for some $c'\in ann(p^{2k-1})$ we get that 
\[a^{\circ p^{2}}-b^{\circ p^{2}}=(a^{\circ p})^{\circ p}-(b^{\circ p})^{\circ p}\in  ann(p^{n-1})+p<c>\subseteq 
 ann(p^{2k-2}),\] where $<c>$ denotes the ideal generated by $c$ in brace $A$ (note that $c\in pA$ and $pA$ is an ideal in brace $A$).
 Continuing in this way we get \[a^{\circ p^{k}}-b^{\circ p^{k}}\in ann(p^{k}). \]
 Therefore $[a]=[b]$ implies $a^{\circ p^{k}}-b^{\circ p^{k}}\in ann(p^{k})$. This is equivalen to $p^{k}(f(a)-f(b))\in ann(p^{k})$, since $p^{k}f(a)=a^{\circ p^{k}}$. Observe that  $p^{k}(f(a)-f(b))\in  p^{k}\cdot ann(p^{2k})=ann(p^{k})\cap p^{k}A$ (indeed if $p^{k}c\in ann(p^{k})$ then $c\in ann(p^{2k})$).
 This implies that 
$[f(a)]=[f(b)]$, as required.

{\em Part $b$.} We will show that the map $[a]\rightarrow [f(a)]$ is injective.
 We will show for each $j\geq 0$ that if \[a-b\in p^{j}A+ann(p^{2k})\] and 
$[f(a)=[f(b)]$ then \[a-b\in p^{j+1}A+ann(p^{2k}).\] 
Note, that when applied for $j>n$ then $p^{j}A=0$ (this follows because the additive group $(A, +)$ of brace $A$ has cardinality $p^{n}$, so by the Lagrange theorem $p^{n}A=0$).
 This implies that  $[a]\rightarrow [f(a)]$
 is an injective function on $A/ann(p^{2k})$. We proceed by induction on $j$.

  $ $
 {\em Part $1$.} (Case of $j=0$)   Let $a,b\in A=p^{0}A+ann(p^{2k})$ and $f(a)-f(b)\in ann(p^{2k})$. We will show that $a-b\in pA+ann(p^{2k})$. 

Let $a,b\in A$. Suppose that $[f(a)]=[f(b)]$ then $f(a)-f(b)\in ann(p^{2k})$, hence 
 $p^{k}(p^{k}(f(a)-f(b))=0$ and $p^{k}f(a)-p^{k}f(b)\in p^{k}\cdot ann(p^{2k})$. 
 Observe that  $ p^{k}\cdot ann(p^{2k})$ is an ideal in $A$. It follows  because $ann(p^{k})$ and $p^{k}A$ are ideals in $A$ and 
$ p^{k}\cdot ann(p^{2k})=ann(p^{k})\cap p^{k}A$ (indeed if $p^{k}c\in ann(p^{k})$ then $c\in ann(p^{2k})$).

Recall that $p^{k}f(a)=a^{\circ {p^k}}$, hence 
 \[a^{\circ p^{k}}=b^{\circ p^{k}}+e',\] for some $e'\in p^{k}ann(p^{2k})$.
   We need to show that  $a-b\in pA+ann(p^{2k})$.

 We will first show that 
\[x_{1}*(x_{2}(*\cdots *(x_{t-1}*x_{t})\cdots ))-x_{1}'*(x_{2}'(*\cdots *(x_{t-1}'*x_{t}')\cdots ))\in pA +ann(p^{2k}),\]
 for all $x_{1}, \ldots , x_{t}, x_{1}', \ldots , x_{t}'\in\{a,b\}$. Indeed   for $t=1, x_{1}=a, x_{1}'=b$ we obtain the desired result that $a-b\in pA+ann(p^{2k})$.

 Observe that for $t=n+1$ the result holds as $A^{n+1}=0$ by Rump's result \cite{Rump}.  
 We proceed by induction on $t$, in the decreasing order, so we assume it holds for $t+1, t+2, \ldots $ and we will prove it for $t$.
 Let $y\in A$.  By Lemma \ref{14} we have:
 \[a^{\circ p^{k}}*y={{p^{k}}\choose 1}a*y+{ {p^{k}}\choose 2}a*(\cdots (a*y))+\cdots \] and  \[b^{\circ p^{k}}*y={{p^{k}}\choose 1}b*y+{ {p^{k}}\choose 2}b*(\cdots (b*y))+\cdots .\]

Observe moreover that for $l\geq p$ we have 
${ {p^{k}}\choose l}a*(\cdots (a*y))
\in p^{k+1}A$, where $a$  appears $l$ times in this expression (see additional explanations at the end of this proof).
Similarly,  
${ {p^{k}}\choose l}b*(\cdots (b*y))
\in p^{k+1}A$.
 Therefore, 
\[{ {p^{k}}\choose l}a*(\cdots (a*y))-{{p^{k}}\choose l}b*(\cdots (b*y))\in p^{k+1}A\] for $l\geq p$.
 Therefore, \[\sum_{i=1}^{p-1}{ {p^{k}}\choose l}(a*(\cdots (a*y))-
b*(\cdots (b*y)))\in a^{\circ p^{k}}*y-b^{\circ p^{k}}*y+ p^{k+1}A\subseteq
 p^{k+1}A+p^{k}ann(p^{2k}).\] 
It follows that \[\wp^{-1}(\sum_{i=1}^{p-1}{ {p^{k}}\choose l}(a*(\cdots (a*y))-
b*(\cdots (b*y))))\in pA+ann(p^{2k}).\]

This means that  \[(a*y+{\frac {p^{k}-1}2}a*(a*y)+\cdots )-(b*y+{\frac {p^{k}-1}2}b*(b*y)+\cdots )\in pA+ann(p^{2k}),\] where each sumation has $p-1$ summands.

 We now assume that $y=x_{1}*(x_{2}*(\cdots x_{t-2}*x_{t-1}))$ for some $x_{1}, \ldots , x_{t-1}\in \{a,b\}$.

 Notice that \[({\frac {p^{k}-1}2}a*(a*y)+\cdots )-({\frac {p^{k}-1}2}b*(b*y)+\cdots )\in pA+ann(p^{2k})\] by the inductive assumption on $t$, hence $a*y-b*y\in pA$.
 We can put elements $a^{\circ p^{k}}$ and $b^{\circ p^{k}}$ at a different place (not the first place) and combine the results to obtain  
\[x_{1}*(x_{2}(*\cdots *(x_{t-1}*x_{t})\cdots ))-x_{1}'*(x_{2}'(*\cdots *(x_{t-1}'*x_{t}')\cdots ))\in pA+ann(p^{2k}),\] for any $x_{i}, x_{i}'\in \{a,b\}$ for this given $t$ (similarly as it was done in unpublished paper \cite{newSmok}, on page $14$). 

$ $

{\em Part $2$.} (Case of general $j$)  Suppose that $j>0$ and $a-b\in p^{j}A+ann(p^{2k})$ and $[f(a)]=[f(b)]$ so $f(a)-f(b)\in ann(p^{2k})$.
 We will show that $a-b\in p^{j+1}+ann(p^{2k})$. 
Similarly as in case $j=0$ we denote $y=x_{1}*(x_{2}(*\cdots *(x_{t-2}*x_{t-1})\cdots ))$.  Observe that   $a^{\circ p^{k}}-b^{\circ p^{k}}\in p^{k}A\cap ann (p^{k})= p^{k}\cdot ann(p^{2k})$ gives
 \[a^{\circ p^{k}}*y=b^{\circ p^{k}}*y+e,\]
  for some $e\in p^{k}\cdot ann(p^{k})$, since $p^{k}\cdot ann(p^{2k})$ is an ideal in the brace $A$.
It follows that \[a^{\circ p^{k}}*y-b^{\circ p^{k}}*y\in p^{k}\cdot ann(p^{2k}).\]
 Therefore \[\sum_{i=1}^{p^{k}}{ {p^{k}}\choose l}(a*(\cdots (a*y))-b*(\cdots (b*y)))\in p^{k}\cdot ann(p^{2k}),\]
where $a$ and $b$ appear $l$ times at the expression near ${{p^{k}}\choose l}$.
Observe that that for $l\geq p$,  
\[{ {p^{k}}\choose l}a*(\cdots (a*y))-{{p^{k}}\choose l}b*(\cdots (b*y))\in p^{j+1+k}A+p^{k}ann(p^{2k})\] for $l\geq p$ (see additional explanations at the end of this proof).

 Therefore \[\sum_{i=1}^{p-1}{ {p^{k}}\choose l}(a*(\cdots (a*y))-
b*(\cdots (b*y)))\in p^{j+1+k}A+p^{k}\cdot ann(p^{2k}).\]  
It follows that \[\wp^{-1}(\sum_{i=1}^{p-1}{ {p^{k}}\choose l}(a*(\cdots (a*y))-
b*(\cdots (b*y))))\in p^{j+1}A+ann(p^{2k})\subseteq p^{j+1}A+ ann(p^{2k}).\] 
 The above two equations imply that $a*y-b*y\in p^{j+1}A+ann(p^{2k})$ (similarly as in Part $1$ above).
 
  By putting $a^{\circ p^{k}}$ and $b^{\circ k}$ at $j$th place instead of the first place (similarly as it was done in unpublished paper \cite{newSmok}, on page $14$), and by applying the same reasoning we obtain  
\[x_{1}*(x_{2}(*\cdots *(x_{t-1}*x_{t})\cdots ))-x_{1}'*(x_{2}'(*\cdots *(x_{t-1}'*x_{t}')\cdots ))\in p^{j+1}A+ann(p^{2k}),\] for any $x_{i}, x_{i}'\in \{a,b\}$ for this given $t$.

 This proves the inductive argument. Therefore $a-b\in p^{j}A+ann(p^{2k})$ for each $j\leq \alpha -2k$, consequently for $t=1$, $j=\alpha -2k$ we have  $[a]=[b]$. This concludes the proof.

$ $

{\em Additional explanations for parts  $1$ and $2$.}

{\em Explanations for part $1$.} We will show that  that for $l\geq p$ we have 
${ {p^{k}}\choose l}a*(\cdots (a*y))
\in p^{k+1}A$, where $a$  appears $l$ times in this expression. This follows from  Property $1$  and because ${ p^{k} \choose l}={{p^{k}-1}\choose {l-1}}\cdot {\frac {p^{k}}l}.$
 We will use notation from Lemma \ref{14}, we need to show that $p^{s_{l}+1}$ divides $e_{l}'(a,y)$, where $p^{s_{l}}$ divides $l$ and $s_{l}\geq 1$. Since $p^{s_{l}}$ divides $l$, we have $l\geq p^{s_{l}}$.
$e_{p^{s_{j}}}'(a,y)=$
$e_{p}'(a, e_{p}(a, \cdots e_{p}(a,y)))\in p^{s_{j+1}}A,$
 since $e_{p}'(a, p^{t}y)=p^{t}e_{p}'(a,y)\in p^{t+2}A$
 by Property $1'$ applied two times (for every $t$).
 Therefore, $e_{l}'(a,y)\in  p^{s_{j+1}}A$. 

$ $

{\em Explanations for part $2$}.
 Let $a-b\in p^{j}A+ann(p^{2k})$ for some $j>0$, we will show that 
${ {p^{k}}\choose l}(a*(\cdots (a*y))-b*(\cdots (b*y)))
\in p^{k+j+1}A+p^{k}ann(p^{2k})$, where $a$  appears $l$ times in this expression. We need to show that
 $p^{s_{l}+j+1}$ divides $e_{l}'(a,y)-e_{l}'(b,y)$ (where $s_{l}$ is the largest integer such that $p^{s_{l}}$ divides $l$).
  
Note that $a-b\in p^{j}A+ann(p^{2k})$  imply that $b=a\circ c$ for some $c\in p^{j}A+ann(p^{2k})$.

 Note that $e_{l}'(a,y)=e_{t}'(a, e_{l-t}(a,y))$ and $e_{l}'(b,y)=e_{t}'(b, e_{l-t}(b,y))$ for all $0<t<l$.   
Reasoning similarly as in part $1$ of this proof we have for every $j$
\[e_{l}'(b,y)-e_{l}'(a,y)\in \sum B*(\cdots *<c>(\cdots *(B*e_{t}'(b,y))))+e_{l-t}'(a, (e_{j}'(b,y)-e_{j}'(a,y))).\]
For $s_{l}=1$ we have $l\geq p$ and then for $t={\frac {p-1}2}$ we have 
\[e_{l}'(b,y)-e_{l}'(a, y)\in (p^{j+2}A+p^{2}ann(p^{2k}))+e_{l-t}'(a, <c>)\subseteq p^{j+2}A+p^{2}ann(p^{2k}) ,\]
 where $<c>$ denotes the ideal generated by $c$ in $A$.
 Therefore \[ {{p^{k}}\choose l}(a*(\cdots (a*y))-b*(\cdots (b*y)))\in p^{k-1}(p^{j+2}A+p^{2}ann(p^{2k}))\subseteq p^{k+j+1}A+p^{k}ann(p^{2k}).\]

If $s_{l}>1$ then reasoning in a similar way,  for $t={\frac {p-1}2}$, we have   
\[e_{l}(b, y)-e_{l, y}(a)\in  (p^{j+1+s_{l}}A+p^{s_{l}+1}ann(p^{2k}))+e_{t}'(a,  (e_{t}'(b,y)-e_{t}'(a,y))).\]
 It suffices to show that $e_{l-t}'(b,y)-e_{l-t}'(a,y))\in p^{s_{j}+j-1}A$, and it can be done be repeating the same argument again several times.

\end{proof}

 Let $A$ be a brace. Recall that $A^{\circ p^{k}}$ is the subgroup of the multiplicative group $(A, \circ )$ of brace $A$ by  $p^{k}$th powers of elements from $(A, \circ )$, so $A$ is generated by elements $a^{\circ p^{k}}$ for $a\in A$. 
\begin{corollary}\label{bezdowodu}
 Let assumption be as in Theorem \ref{f(a)}. Then, \[A^{\circ p^{k}}=\{a^{\circ p^{k}}:a\in A\}=p^{k}A.\]
\end{corollary}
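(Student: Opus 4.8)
The plan is to deduce the identity $A^{\circ p^{k}} = \{a^{\circ p^{k}} : a \in A\} = p^{k}A$ from the two results just established: Proposition \ref{333}, which gives $p^{i}A = E_{i}$ (and in particular $p^{k}A$ is a subgroup of $(A,\circ)$ equal to the subgroup generated by $p^{k}$-th powers after $k$ iterations of the $p$-th power operation), and Theorem \ref{f(a)}, whose proof already shows that the map $[a] \mapsto [a^{\circ p^{k}}]_{ann(p^{k})}$ is well defined and injective on $A/ann(p^{2k})$. First I would observe the easy inclusions: each $a^{\circ p^{k}} \in E_{k} = p^{k}A$ by Lemma \ref{14} together with Proposition \ref{333} (the binomial coefficients $\binom{p^{k}}{i}$ for $i < p^{k}$ contribute the ``lower'' $p$-power terms and Property $1$ handles the tail $e_{p^{k}}(a)$), so $\{a^{\circ p^{k}}:a\in A\} \subseteq p^{k}A$; and $\{a^{\circ p^{k}}:a\in A\}$ generates $A^{\circ p^{k}}$ by definition, so $A^{\circ p^{k}} \subseteq p^{k}A$ since $p^{k}A$ is a subgroup of $(A,\circ)$ (being $E_{k}$). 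It remains to prove $p^{k}A \subseteq \{a^{\circ p^{k}}:a\in A\}$.

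For the reverse inclusion I would argue by a counting/injectivity argument. The map $\phi: A \to p^{k}A$, $\phi(a) = a^{\circ p^{k}}$, descends, by Part $a$ of the proof of Theorem \ref{f(a)}, to a well-defined map $\bar\phi: A/ann(p^{2k}) \to p^{k}A / (p^{k}A \cap ann(p^{k}))$ (note $p^{k}A \cap ann(p^{k}) = p^{k}\cdot ann(p^{2k})$, as used repeatedly in that proof). Part $b$ of the same proof shows $\bar\phi$ is injective. Now I would compare cardinalities: $|A/ann(p^{2k})| = |A| / |ann(p^{2k})|$, and since multiplication by $p^{k}$ on the abelian group $(A,+)$ has kernel $ann(p^{k})$ and image $p^{k}A$, standard facts about finite abelian groups give $|p^{k}A| = |A|/|ann(p^{k})|$ and $|p^{k}\cdot ann(p^{2k})| = |ann(p^{2k})|/|ann(p^{k})|$, whence $|p^{k}A / (p^{k}A \cap ann(p^{k}))| = |A|/|ann(p^{2k})|$. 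So $\bar\phi$ is an injection between finite sets of equal cardinality, hence a bijection. Unwinding, this says every coset of $p^{k}\cdot ann(p^{2k})$ inside $p^{k}A$ is hit by some $a^{\circ p^{k}}$; combined with the fact (from Lemma \ref{14}) that $(a \circ x)^{\circ p^{k}}$ ranges over a whole such coset as $x$ ranges over $ann(p^{2k})$ — more precisely $a^{\circ p^{k}} - (a\circ x)^{\circ p^{k}} \in p^{k}\cdot ann(p^{2k})$ and these translates exhaust the coset — I would conclude $\{a^{\circ p^{k}}:a\in A\} = p^{k}A$, which in particular is already a subgroup, so it equals $A^{\circ p^{k}}$.

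The main obstacle I anticipate is the last bridging step: passing from ``$\bar\phi$ is a bijection of quotients'' to ``$\phi$ is surjective onto $p^{k}A$ on the nose'' rather than merely surjective modulo $p^{k}\cdot ann(p^{2k})$. This requires showing that the fibre over a coset is genuinely an entire coset's worth of values — i.e.\ that as $x$ runs over $ann(p^{2k})$, the elements $(a \circ x)^{\circ p^{k}}$ realize every element of $a^{\circ p^{k}} + p^{k}\cdot ann(p^{2k})$, not just a subset. This should follow from Lemma \ref{14} applied to $(a\circ x)^{\circ p^{k}}$ together with Properties $1'$ and $1''$ (exactly the computation in Part $a$ of Theorem \ref{f(a)}, read as an equality of cosets rather than a containment), but one must check that the correspondence $x \mapsto (a\circ x)^{\circ p^{k}} - a^{\circ p^{k}}$ is, modulo higher terms, essentially multiplication by $p^{k}$ on $ann(p^{2k})$ and hence surjective onto $p^{k}\cdot ann(p^{2k})$. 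Alternatively, and perhaps more cleanly, one can avoid this by noting that $\{a^{\circ p^{k}} : a \in A\}$ is closed under $\circ$ up to elements of $p^{k}\cdot ann(p^{2k})$ and then invoking that $p^{k}A = E_{k}$ is generated by such powers, so the subgroup they generate already coincides with the set — but the counting argument is the most transparent route and I would present it as above.
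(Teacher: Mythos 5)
Your overall strategy --- combine the inclusion chain $\{a^{\circ p^{k}}:a\in A\}\subseteq A^{\circ p^{k}}\subseteq E_{k}\subseteq p^{k}A$ from Proposition \ref{333} with a counting argument driven by the well-definedness/injectivity established in Theorem \ref{f(a)} --- is exactly the paper's strategy, and your easy inclusions and the cardinality computation $|p^{k}A/(p^{k}\cdot ann(p^{2k}))|=|A|/|ann(p^{2k})|$ are correct. However, there is a genuine gap, and you have correctly located it yourself: a bijection $\bar\phi:A/ann(p^{2k})\to p^{k}A/(p^{k}\cdot ann(p^{2k}))$ only tells you that every coset of $p^{k}\cdot ann(p^{2k})$ in $p^{k}A$ \emph{meets} the set $\{a^{\circ p^{k}}:a\in A\}$, not that it is \emph{contained} in it. Your two proposed repairs do not close this: the claim that $x\mapsto (a\circ x)^{\circ p^{k}}-a^{\circ p^{k}}$ is ``essentially multiplication by $p^{k}$ on $ann(p^{2k})$'' is not justified (the expansion of $(a\circ x)^{\circ p^{k}}$ via Lemma \ref{14} produces correction terms, and surjectivity onto $p^{k}\cdot ann(p^{2k})$ would itself require another counting argument one level down), and the ``closed under $\circ$ up to $p^{k}\cdot ann(p^{2k})$'' alternative again only gives a statement modulo the same subgroup.

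The fix --- and what the paper actually does --- is to run the Theorem \ref{f(a)} argument modulo $ann(p^{k})$ instead of modulo $ann(p^{2k})$. At that level one gets the exact statement $a^{\circ p^{k}}=b^{\circ p^{k}}$ if and only if $p^{k}a=p^{k}b$: the ``well-defined'' direction gives $a-b\in ann(p^{k})\Rightarrow a^{\circ p^{k}}-b^{\circ p^{k}}\in p^{k}\cdot ann(p^{k})=\{0\}$, and the ``injective'' direction gives the converse. Hence the fibres of $a\mapsto a^{\circ p^{k}}$ coincide with the fibres of $a\mapsto p^{k}a$, so $|\{a^{\circ p^{k}}:a\in A\}|=|A|/|ann(p^{k})|=|p^{k}A|$ on the nose, and the inclusion chain forces all the sets to be equal --- no lifting of a coset bijection is needed. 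So your proof is salvageable, but only after replacing $ann(p^{2k})$ by $ann(p^{k})$ throughout the counting step; as written, the bridging step you flag is an unclosed hole.
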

 \begin{proof} By using an analogous proof as in 
  Theorem \ref{f(a)} it can be proved that $[f(a)]_{ann(p^{k})}\neq [f(b)]_{ann(p^{k})}$ if and only if $[a]_{ann (p^{k})}\neq [b]_{ann(p^{k})}$. This is equivalent to the fact that $p^{k}f(a)\neq p^{k}f(b)$ if and only if $p^{k}a\neq p^{k}b$.

Therefore the cardinality of the set  $\{a^{\circ p^{k}}:a\in A\}$ is the same as the cardinality of the set 
$p^{k}A=\{p^{k}a:a\in A\}$. Let $E_{k}$ be as in Proposition \ref{333}. 
 Observe that $\{a^{\circ p^{k}}:a\in A\}\subseteq A^{\circ p^{k}}\subseteq E_{k}\subseteq p^{k}A$
 where the last inclusion follows from Proposition \ref{333}. Therefore all these sets are equal.
\end{proof}

\section{ Braces in which $p^{i}A$ and $ann(p^{i})$ are ideals.}\label{x}

In this section we investigate properties of braces satisfying property $1'$ and $1''$.

\begin{lemma}\label{xyz}
Let $A$ be a brace of cardinality $p^{n}$, where $p$ is a prime number and $n$ is a natural number. 
 Suppose that $A$ satisfies property $1'$ and $1''$. Then $p^{i}A$ and $ann(p^{i})$ are ideals in brace $A$ for $i=1,2, \ldots $. 
 Moreover $p^{i}A=A^{\circ p^{i}}$ for every $i$.
 Moreover, $(p^{i}A)*ann(p^{j})\subseteq  ann(p^{j-i})$ for $j\geq i$.
\end{lemma}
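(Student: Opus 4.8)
The plan is to prove Lemma \ref{xyz} essentially as a consolidation of the results already established in Section 3, verifying that the hypotheses of Proposition \ref{333}, Theorem \ref{f(a)} and Corollary \ref{bezdowodu} are all met under Properties $1'$ and $1''$. First I would observe that Property $1'$ — the condition $(\lambda_a - I)^{\lfloor (p-1)/4\rfloor}(b)\in pA$ and $(\lambda_a - I)^{\lfloor (p-1)/4\rfloor}(a)\in pA$ — immediately implies the weaker-looking hypothesis of Proposition \ref{333}, namely $(\lambda_a - I)^{(p-1)/2}(b)\in pA$ for all $a,b\in A$, since $\lfloor(p-1)/4\rfloor\le (p-1)/2$ and iterating $\lambda_a-I$ only pushes elements further into $pA$ (using that $pA$ is stable under $a*(-)$, or more carefully building up the chain of inclusions exactly as in Part 1 of the proof of Proposition \ref{333}). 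Hence Proposition \ref{333} applies and gives at once that $p^iA$ is an ideal of $A$ for every $i$, and moreover that $p^iA = E_i$ where $E_i$ is the subgroup of $(A,\circ)$ generated by $p$-th powers of elements of $E_{i-1}$.

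Next I would establish that $ann(p^i)$ is an ideal. This is where Property $1''$ enters: one needs to check $ann(p^i)$ is closed under $a*(-)$ and under $(-)*a$. Closure under $a*(-)$: if $x\in ann(p^i)$ then $a*x \in ann(p^{i-1})\subseteq$... wait, that is not quite closure — rather one argues directly that $p^i(a*x) = a*(p^ix) = a*0 = 0$, so $a*x\in ann(p^i)$, using additivity of $*$ in the second variable; so this direction is automatic in any brace and needs no hypothesis. Closure under $(-)*a$, i.e. $x*a\in ann(p^i)$ for $x\in ann(p^i)$, is the substantive point. Here I would use Lemma \ref{14}: $x^{\circ p^{j}} * a$ is a $\mathbb{Z}$-combination of the $e_\ell'(x,a)$, and one wants to show $p^i(x*a)=0$; writing $x = \wp^{-1}$-type elements is not available, so instead I would run the argument analogous to Part 1 of Proposition \ref{333} but tracking annihilator exponents, exactly the kind of bookkeeping done in Theorem \ref{f(a)}, using Property $1''$ (which says $e_{\lfloor(p-1)/4\rfloor}'(a,ann(p^i))\subseteq ann(p^{i-1})$) to control the high-index terms $e_\ell'$ for $\ell\ge p$, while the low-index terms carry a binomial coefficient divisible by $p$. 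Combining these gives $ann(p^i)*A \subseteq ann(p^i)$, so $ann(p^i)$ is an ideal.

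Once both $p^iA$ and $ann(p^i)$ are known to be ideals, the remaining two assertions follow from the machinery already built. The statement $(p^iA)*ann(p^j)\subseteq ann(p^{j-i})$ for $j\ge i$: since $p^iA = E_i$ and elements of $E_i$ act on $A$ via $\lambda_{c}$ with $c$ a $\circ$-product of $p$-th powers from $E_{i-1}$, I would show by induction on $i$ (base case $i=1$ directly from Lemma \ref{14} plus Property $1''$, since the high-index terms land in $p\cdot ann(p^j)\subseteq ann(p^{j-1})$ and the divisible-by-$p$ low-index terms land in $p\cdot ann(p^j)$ as well) that $c*ann(p^j)\subseteq ann(p^{j-i})$ whenever $c\in E_i$; the inductive step uses $\lambda_{c^{\circ p}}$ and the identity $\lambda_{c^{\circ p}}\lambda_{d^{\circ p}} = \lambda_{c^{\circ p}\circ d^{\circ p}}$ exactly as in Part 1 of Proposition \ref{333}. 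Finally, $p^iA = A^{\circ p^i}$ follows from Corollary \ref{bezdowodu}: its hypotheses are precisely that $ann(p^i)$, $p^iA$ are ideals, that $p^iA*ann(p^j)\subseteq ann(p^{j-i})$, and Properties $1'$, $1''$ — all now verified — so the corollary yields $A^{\circ p^k} = \{a^{\circ p^k}:a\in A\} = p^kA$, and running this for each $k$ gives the claim.

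The main obstacle I anticipate is the proof that $ann(p^i)$ is closed under right multiplication $(-)*a$. Unlike the ideal property of $p^iA$, which is handed to us by Proposition \ref{333}, the annihilator statement requires genuinely redoing an induction-on-nilpotency-class argument (the kind appearing in Theorem \ref{f(a)} and in the cited unpublished note) while carefully tracking the drop in annihilator exponent at each application of Property $1''$, and making sure the factor $\lfloor(p-1)/4\rfloor < (p-1)/2$ is enough room — i.e. that when an $e_\ell'$ with $\ell\ge p$ is decomposed as an iterated $e_{\lfloor(p-1)/4\rfloor}'$ of something, each such application both lands in a higher power of $ann$ and does so enough times to absorb the worst $p$-adic valuation of $\ell$. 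Everything else is assembly of already-proven statements.
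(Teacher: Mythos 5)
Your overall architecture tracks the paper's: Proposition \ref{333} handles the $p^{i}A$ ideals, the containment $(p^{i}A)*ann(p^{j})\subseteq ann(p^{j-i})$ is proved by induction on $i$ through $p^{i}A=E_{i}=(E_{i-1})^{\circ p}$ and Lemma \ref{14} exactly as you sketch, and invoking Corollary \ref{bezdowodu} at the end for $p^{i}A=A^{\circ p^{i}}$ is a legitimate (and arguably cleaner) way to close that claim once the other assertions are in hand. The gap is precisely at the step you yourself flag as the main obstacle: showing $x*a\in ann(p^{i})$ for $x\in ann(p^{i})$, $a\in A$. Your plan --- expand $x^{\circ p^{j}}*a$ via Lemma \ref{14}, control the terms $e_{\ell}'$ with $\ell\geq p$ by Property $1''$ and the low-index terms by $p\mid\binom{p}{k}$, and ``combine'' --- only yields information about $x^{\circ p^{j}}*a$, never about $x*a=e_{1}'(x,a)$ itself. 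Worse, Property $1''$ places the annihilator element in the \emph{right}-hand slot of the iterated star, whereas here $x$ sits in the \emph{left} slot, so it cannot be applied directly to $x*a$.

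The paper bridges this with two ideas absent from your sketch. First, for $x\in ann(p^{i})$ one shows $x^{\circ p}\in ann(p^{i-1})$ --- here Property $1''$ \emph{does} apply, because $e_{p}'(x)=e_{m}'(x,e_{p-m}'(x,x))$ has its right argument in $ann(p^{i})$ --- and iterating gives $x^{\circ p^{i}}=0$. Second, setting $x'=x^{\circ p^{i-1}}$, the identity $0=x'^{\circ p}*b=\sum_{k=1}^{p}\binom{p}{k}e_{k}'(x',b)$ is \emph{solved} for the linear term $e_{1}'(x',pb)$: every other summand can be rewritten (using the $p$-divisibility of $\binom{p}{k}$, Property $1'$ and the pullback $\rho^{-1}$) so as to contain $e_{1}'(x',pb)$ nested at least one level deeper, and repeated back-substitution drives the degree up until everything lands in $A^{n+1}=0$ by Rump's theorem, giving $p(x'*b)=0$; iterating over $x''=x^{\circ p^{i-2}}$, etc., recovers $p^{i}(x*b)=0$. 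Without the vanishing of $x^{\circ p^{i}}$ and this inversion/back-substitution against the nilpotency of the brace, the estimates you list do not combine into the conclusion, so as written the central step of your proof does not go through.
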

\begin{proof}
{\em Part $1$.}  Notice that $p^{i}A$ are ideals in $A$ by Proposition \ref{333}. 
 We will show that $ann(p^{i})$ is an ideal in $A$ for every $i$.

 Notice that $a*(p^{i}b)=p^{i}(a*b)$. Therefore,  if $a\in  A$ and $b\in ann (p^{i})$ then $a*b\in ann (p^{i})$. 
   To show that $ann(p^{i})$ is an ideal it suffices to show that $a*b\in ann(p^{i})$ provided that $a\in ann(p^{i})$, $b\in A$.

Let $a\in ann(p^{i})$ for some $i$, observe that $a^{\circ p}\in ann(p^{i-1})$. Indeed, 
let notation be as in  Lemma \ref{14} 
then $a^{\circ p}=\sum_{i=1}^{p-1}{p\choose i}e_{i}'(a)+e_{p}'(a)$. Notice that ${p\choose i}$ is divisible by $p$ for $0<i<p$ so $p^{i-1}\sum_{i=1}^{p-1}{p\choose i}e_{i}'(a)=0$ so $\sum_{i=1}^{p-1}{p\choose i}e_{i}'(a)\in ann(p^{i-1})$.
  Observe also that $e_{p}'(a)\in ann(p^{i-1})$ by Property $1''$. 
 Therefore, $a^{\circ p}\in ann(p^{i-1})$. Applying this result several times we get \[a^{\circ p^{i}}=0.\]
 Consequently, $a^{\circ p^{i}}*b=0*b=0$.
Observe that   
$0=a^{\circ p^{i}}*b=a'^{\circ p}*b=\sum_{k=1}^{p} {p\choose  k}e_{k}'$ where $e_{1}'=a'*b$,  and $e_{j+1}'=a'*e_{j}'$ for each $j$, where $a'=a^{\circ p^{i-1}}$. 

 Note that $e_{{\frac {p-1}2}}'(a',b)\in pA$ by property $1'$, for every $a', b\in A$.
 Observe also that  \[e_{{\frac {p-1}2}}'(a',b)-
\rho ^{-1}( e_{{\frac {p-1}2}}'(a',pb))\in ann(p)\] for every $a', b\in A$. This can be seen by multyplying both sides by $p$.
 Therefore, \[e_{p}'(a',b)=e_{{\frac {p+1}2}}'(a',\rho ^{-1}( e_{{\frac {p-1}2}}'(a', pb))).\]
 This follows from Property $1''$. 

Let $a'=a^{\circ p^{i-1}}$, then for some integers $\sigma _{i}$ we have 
  \[0=a^{\circ p^{i}}*b=a'^{\circ p}*b=\sum_{i=1}^{p-1}\sigma _{i}e_{i}'(a',pb)+e_{{\frac {p+1}2}}'(a',\rho ^{-1}( e_{{\frac {p-1}2}}'(a',pb))).\]

 This implies, 
 \[e_{1}'(a',pb)= -\sum_{i=2}^{p}\sigma _{i}e_{i-1}'(a',e_{1}'(a',pb))-a'*(e_{{\frac {p-1}2}-1}'(a',\rho ^{-1}( e_{{\frac {p-1}2}-1}'(a',e_{1}'(a',pb)))).\]

 We can substitute the expression for $e_{1}'(a,pb)$ in the expressions at the right-hand side  of the above equation (at the end of each summand).

 Applying it several times we get $e_{1}'(a',pb)\in A^{n+1}=0$, hence $pe_{1}'(a',b)=0$. 
 This follows from the fact that $\rho ^{-1}$ ``can be moved to the right '' as long as there is a product of at most 
${\frac {p-1}4}$ elements before it on the left by property $1''$. 
Therefore, $pe_{1}'(a,b)=(a'*(pb))=0$.

If $a\in ann (p)$ then $a'=a^{\circ p^{0}}=a^{\circ 1}=a$ so $p(a*b)=0$ and hence $a*b\in ann(p)$ as required.
If $a\in ann(p^{i})$ for $i\geq 2$ denote $a''=a^{\circ p^{i-2}}$, and so $a''^{\circ p}=a'$.
 By the above, applied for $pb$ instead of $b$ and $a''$ instead of $a'$ we get 
\[0=a''^{\circ p}*(pb)=\sum_{i=1}^{p-1}\sigma _{i}e_{i}'(a'',p\cdot (pb))+e_{{\frac {p+1}2}}'(a',\rho ^{-1}( e_{{\frac {p-1}2}}'(a',p\cdot pb))).\]

 This implies, 
 \[e_{1}'(a',p^{2}b)= -\sum_{i=2}^{p}\sigma _{i}e_{i-1}'(a'',e_{1}''(a'',p^{2}b))-a'*(e_{{\frac {p-1}2}}'(a'',\rho ^{-1}( e_{{\frac {p-1}2}}'(a'',e_{1}'(a'',p^{2}b)))).\]

 We can substitute the expression for $e_{1}'(a,p^{2}b)$ in the expressions at the right-hand side  of the above equation. 
 Similarly as before, continuing to substitute in such way $n+1$ we get that $e_{1}'(a'', p^{2}b)\in A^{n+1}=0$.
 Continuing in this way we get $e_{1}'(a, p^{i}b)=0$, hence $p^{i}(a*b)=0$, and  so $a*b\in ann (p^{i})$.

{\em Part 2. } Observe that $p^{i}A=E_{i}$ by Proposition \ref{333}.
 We will now show that $(p^{i}A)*ann(p^{j})\subseteq  ann(p^{j-i})$ for $j\geq i$.
 Let $\lambda $ be defined as in the introduction (and also as at the beginning of the next section) then 
$\lambda _{ a^{\circ p^{i}}\circ c^{p^{i}}}(b) =
 \lambda _{a^{p}}(\lambda _{c^{p}}(b))$, hence 
\[(a^{\circ p}\circ c^{p})*b=
 (a^{\circ p}(* c^{p}*b)+ a^{\circ p}*b+ c^{p})*b.\]
  We also know that $p^{i}A=E_{i}=(E_{i-1})^{\circ p}$.
 Therefore, it suffices to show that for $a\in E_{i-1}$ we have  
\[a^{\circ p}*ann(p^{j})\subseteq ann(p^{j-i}).\] 
 We use induction on $i$ to prove this fact. 

 For $i=1$ we have $a\in E_{0}=A$ and by Lemma \ref{14}, 
\[a^{\circ p}*ann(p^{j})\subseteq \sum_{i=1}^{p-1}{p\choose i}e_{i}'(a, ann(p^{j}))+e_{p}'(a, ann(p^{j}))\subseteq ann(p^{j-1}),\] 
 by Property $1''$ and since ${p\choose i}$ is divisible by $p$ for $0<i<p$.

Suppose now that $i>1$. By the inductive assumption
$b*ann(p^{j})\subseteq ann(p^{j-i+1})$  for all $b\in E_{i-1}$. We need to show that 
 $b*ann(p^{j})\subseteq ann(p^{j-i})$  for all $b\in E_{i}$. 
 This is equivalent to show (by a similar argument as the begining of this proof) that for all 
$a\in E_{i-1}$ we have 
 \[a^{\circ p}*ann(p^{j})\subseteq ann(p^{j-i}).\]
  By Lemma \ref{14}  applied for $j=p$ we have 
\[a^{\circ p}*ann(p^{j})\subseteq \sum_{i=1}^{p-1}{p\choose i}e_{i}'(a, ann(p^{j}))+e_{p}'(a, ann(p^{j}))\subseteq ann(p^{j-1}).\] 
 By the inductive assumption $a*ann(p^{j})\subseteq ann(p^{j-i+1})$, hence  Similarly as before we get $\sum_{i=1}^{p-1}{p\choose i}e_{i}'(a, ann(p^{j}))\subseteq ann(p^{j-i})$.
 Observe also that $e_{p}'(a, ann(p^{j}))\subseteq e_{p-1}'(a, a*ann(p^{j}))\subseteq
 e_{p-1}'(a, ann(p^{j-i+1}))\subseteq ann(p^{j-i})$ by property $1''$. This concludes the proof.
\end{proof}

\section{Rings associative and nonassociative}
   Let $(A, +, \circ )$ be a brace. One of the mappings used in connection with braces are the  maps $\lambda _{a}:A\rightarrow A$ for $a\in A$. Recall that  for $a,b, c\in A$ 
   we have $\lambda _{a}(b) = a \circ  b - b$, $\lambda _{a\circ c}(b) = \lambda _{a}(\lambda _{c}(b))$.

\subsection{Generalising some results from other papers}

 In this section we will prove a  proposition which generalises Proposition $5.1$ from \cite{asas}.
We start with an easy  supporting lemma. Recall that for a given integer $m$ an integer $\xi $  is a primitive root modulo $m$ if every integer coprime to $m$ is
congruent to a power of $\xi $ modulo $m$. It is known that
there exists a primitive root modulo $p^j$ for every $j$ and every odd prime number $p$.
 \begin{lemma}\label{Engelxi}
  Let $p>2$ be a prime number and $n>1$ be a natural number.
Let $\xi=\gamma ^{p^{n-1}}$, where $\gamma $ is a primitive root modulo $p^{n}$. Then $\xi ^{p-1}$ is congruent to $1$ modulo $ p^{n}$.  Moreover,  $ \xi ^{j}$ is not congruent to $1$ modulo $p$ for any natural number $0<j<p-1$.
\end{lemma}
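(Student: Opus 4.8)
The plan is to prove the two congruences separately, relying on standard facts about the multiplicative group $(\mathbb{Z}/p^n\mathbb{Z})^\times$. First I would recall that for an odd prime $p$ this group is cyclic of order $p^{n-1}(p-1)$, so a primitive root $\gamma$ modulo $p^n$ generates it. Setting $\xi = \gamma^{p^{n-1}}$, the order of $\xi$ in $(\mathbb{Z}/p^n\mathbb{Z})^\times$ is $p^{n-1}(p-1)/\gcd(p^{n-1}(p-1),p^{n-1}) = p-1$. Hence $\xi^{p-1} \equiv 1 \pmod{p^n}$ immediately, and moreover $\xi^j \not\equiv 1 \pmod{p^n}$ for $0 < j < p-1$.

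For the second claim I would pass to the quotient map $(\mathbb{Z}/p^n\mathbb{Z})^\times \to (\mathbb{Z}/p\mathbb{Z})^\times$, which is surjective with kernel of order $p^{n-1}$. Under this reduction $\gamma$ maps to a primitive root $\bar\gamma$ modulo $p$, and $\xi = \gamma^{p^{n-1}}$ maps to $\bar\gamma^{\,p^{n-1}}$. Since $(\mathbb{Z}/p\mathbb{Z})^\times$ has order $p-1$, the exponent $p^{n-1}$ can be reduced modulo $p-1$; as $p \equiv 1 \pmod{p-1}$ we get $p^{n-1} \equiv 1 \pmod{p-1}$, so $\bar\gamma^{\,p^{n-1}} = \bar\gamma$ in $(\mathbb{Z}/p\mathbb{Z})^\times$. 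Therefore the reduction of $\xi$ modulo $p$ is again a primitive root modulo $p$, hence has order exactly $p-1$, which shows $\xi^j \not\equiv 1 \pmod p$ for $0 < j < p-1$.

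There is essentially no obstacle here — the statement is a routine consequence of the cyclicity of $(\mathbb{Z}/p^n\mathbb{Z})^\times$ and of $(\mathbb{Z}/p\mathbb{Z})^\times$, together with the observation $p \equiv 1 \pmod{p-1}$. The only point requiring a little care is the computation of the order of $\xi$: one must check $\gcd(p^{n-1}(p-1), p^{n-1}) = p^{n-1}$, which is clear, so that $\xi$ has order $p-1$ and not a proper divisor. I would present the argument as two short paragraphs, first establishing the order of $\xi$ modulo $p^n$ (which gives both $\xi^{p-1}\equiv 1$ and, as a bonus if desired, that no smaller positive power works modulo $p^n$), then reducing modulo $p$ to get the sharper non-congruence statement claimed.
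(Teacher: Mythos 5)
Your argument is correct and complete: the order computation for $\xi$ in the cyclic group $(\mathbb{Z}/p^n\mathbb{Z})^\times$ gives the first claim, and reducing modulo $p$ (using $p^{n-1}\equiv 1 \pmod{p-1}$) gives the second. The paper itself offers no proof beyond citing \cite{passage}, and your write-up is exactly the standard argument that citation stands in for, so there is nothing to reconcile.
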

\begin{proof} The same proof as in \cite{passage} works.
\end{proof}
 We also have a supporting lemma:

\begin{lemma}\label{5}
 Let $A$ be a brace of cardinality $p^{n}$ for some $n$. Suppose that  $p^{i}A$ is an  ideal in $A$ for each $i$. Let $j_{1}, \ldots , j_{m}$ be natural numbers and let  $c_{i}\in p^{j_{i}}A$ for $i=1, \ldots , m$,  
 then  any product of elements  $c_{1}, \ldots ,  c_{m}$ and some elements from $A$ 
 belongs to $p^{j_{1}+\ldots +j_{m}}A$.
\end{lemma}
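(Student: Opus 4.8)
The plan is to reduce the statement to a single assertion about products of the form $a_1 * (a_2 * (\cdots * (a_t)\cdots))$ where at least one of the $a_i$ lies in $p^{j_i}A$, and then to exploit repeatedly the two basic facts that $p^jA$ is an ideal of the brace and that $a*(p^jb) = p^j(a*b)$ for all $a,b \in A$. First I would recall that in a brace every product of $m$ elements built from the operation $*$ (with any bracketing) can be rewritten, using left-distributivity $(x+y+x*y)*z = x*z + y*z + x*(y*z)$ and right-distributivity $x*(y+z) = x*y + x*z$, as a $\mathbb Z$-linear combination of left-normed products $x_1 * (x_2 * (\cdots * x_r)\cdots)$ whose entries $x_s$ are among the original factors $c_1,\dots,c_m$ and elements of $A$ (this is the standard normalization used for Lemma \ref{14}); crucially each $c_i$ still appears at least once in each surviving left-normed monomial, so it suffices to prove the claim for a single left-normed product in which each $c_i$ occurs.

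The heart of the argument is then the identity $a * (p^j b) = p^j (a*b)$ together with the observation that since $p^jA$ is an ideal, $p^jA * A \subseteq p^jA$ and $A * p^jA \subseteq p^jA$. I would prove by induction on $m$ (the number of "marked" factors $c_1,\dots,c_m$) that a left-normed product $x_1 * (x_2 * (\cdots * x_r)\cdots)$ in which the marked factors $c_1 \in p^{j_1}A, \dots, c_m \in p^{j_m}A$ occur lies in $p^{j_1 + \cdots + j_m}A$. For the base case $m = 1$: if the single marked factor $c_1 = p^{j_1}d$ sits in the last slot, we can pull the scalar $p^{j_1}$ all the way to the front using $a*(p^{j_1}b) = p^{j_1}(a*b)$ applied $r-1$ times; if $c_1$ sits in an earlier slot $x_s$, then already the inner product $x_s * (x_{s+1} * \cdots)$ lies in $p^{j_1}A$ because $p^{j_1}A$ is a (left) ideal, and then the outer factors $x_1,\dots,x_{s-1}$ keep us inside $p^{j_1}A$ since $A * p^{j_1}A \subseteq p^{j_1}A$. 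For the inductive step, pick the marked factor $c_m$ occurring in the innermost position among $c_1,\dots,c_m$: by the base-case reasoning the sub-product from that slot inward already lies in $p^{j_m}A$, write it as $p^{j_m}e$, and then the remaining product (which still contains $c_1,\dots,c_{m-1}$) with $e$ appended lands in $p^{j_1+\cdots+j_{m-1}}A$ by the induction hypothesis; finally $a*(p^{j_m}\,\cdot\,) = p^{j_m}(a*\,\cdot\,)$ lets us factor out $p^{j_m}$, giving membership in $p^{j_1+\cdots+j_m}A$.

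The only mild subtlety — and the step I expect to need the most care — is keeping track of positions when several marked factors are nested inside one another, i.e.\ making precise "the innermost marked factor" and checking that pulling out a power of $p$ from an inner sub-product does not disturb the marked factors that remain further out; this is purely bookkeeping, handled by always peeling from the inside out and invoking $a*(p^jb)=p^j(a*b)$ and the ideal property $A*p^jA\subseteq p^jA$, $p^jA*A\subseteq p^jA$ at each peel. Note that the hypothesis that $p^iA$ is an ideal for \emph{every} $i$ is used both through $A*p^{j}A\subseteq p^{j}A$ and $p^{j}A*A\subseteq p^{j}A$, and no other property of $A$ is needed, so the lemma holds for all braces of prime power order in which the subgroups $p^iA$ are ideals — in particular for braces satisfying Property $1$ by Proposition \ref{333}.
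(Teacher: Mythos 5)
Your core mechanism is exactly the paper's: the whole lemma rests on the two facts that $(p^{i}a)*b\in p^{i}A$ (because $p^{i}A$ is an ideal) and $a*(p^{j}b)=p^{j}(a*b)$, which combine to give $(p^{i}a)*(p^{j}b)\in p^{i+j}A$, and then one iterates. The paper's proof is literally this one sentence, applied "several times"; your base case and inductive step are a spelled-out version of the same peeling argument, so on the substance you and the paper agree.

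The one step I would push back on is your opening reduction to left-normed products. You assert that an arbitrarily bracketed $*$-product can be rewritten, "using left-distributivity and right-distributivity," as a $\mathbb{Z}$-linear combination of left-normed monomials in the original factors. But $*$ is not additive in its left argument: the brace axiom gives $(a\circ b)*c=a*c+b*c+a*(b*c)$ with $a\circ b=a+b+a*b$, so to decompose $(a*b)*c$ or $(a+b)*c$ one must substitute $b'=\lambda_{a}^{-1}(b)$ and new elements enter (this is precisely the content of Lemma \ref{42}, which is not available here and is itself proved using the present lemma). So the normalization as you state it is not justified. Fortunately it is also unnecessary: run a structural induction on the bracketing itself. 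If $w=u*v$ with $u\in p^{s}A$ and $v\in p^{t}A$ (where $s$, $t$ are the sums of the $j_{i}$ over the marked factors occurring in $u$ and $v$ respectively, known by induction), write $u=p^{s}u'$, $v=p^{t}v'$; then $u*v=(p^{s}u')*(p^{t}v')=p^{t}\bigl((p^{s}u')*v'\bigr)\in p^{t}\cdot p^{s}A=p^{s+t}A$. Deleting the first paragraph and replacing your "marked-factor" induction by this induction on sub-products makes the proof correct, handles every bracketing, and coincides with the paper's argument.
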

\begin{proof}
 It follows by applying several times the fact that  for each $a,b\in A$ there is $c\in A$ such that  $(p^{i}a)*b=p^{i}c$ (since $p^{i}A$ is an ideal in $A$), hence for each $j$ we have $(p^{i}a)*(p^{j}b)=p^{i+j}c$. 
\end{proof}
The following is a generalisation of Lemma $4.2$ and Corollary $4.3$ \cite{asas}.
\begin{lemma}\label{42} Let $W$ denote the set of all non-associative words in non-commuting variables $X,Y, Z$; moreover $Z$ appears only once at each word and both $X$ and $Y$ appear at least once in each word. Let $p$ be a prime number, let $n$ and $k$ be natural numbers. Then there are integers $\beta _{w}$ for $w\in W$, such that only a finite number of them is non-zero and the following holds: For each brace $(A, +, \circ )$ of cardinality $p^{n}$ and such that 
 $p^{i}A$ is an  ideal in $A$ for every $i$, and  $p^{(p-1)k}A=0$.
 Then for all $a,b\in p^{k}A$, $c\in A$ we have 
\[(a+b)*c=a*c+b*c+\sum_{w\in W} \beta _{w}w<a,b,c>,\]
 where $w(a,b,c)$ denote the specialisation of the word $w$ for $X=a, Y=b, Z=c$ and the multiplication in $w<a,b,c>$ is the same as the operation $*$ in brace $A$.
\end{lemma}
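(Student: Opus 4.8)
The plan is to argue by induction on the nilpotency length of the brace-multiplication, exploiting the fact that $A$ has cardinality $p^n$, so $A^{n+1}=0$ by Rump's result, hence the word $(a+b)*c$ expressed by nested $*$-multiplications terminates. The key observation is that the ``error term'' $(a+b)*c - a*c - b*c$ is bilinear in a sense controlled by the brace identities: in any brace one has the identity $(a+b+a*b)*c = a*c + b*c + a*(b*c)$, so $(a+b)*c = a*c+b*c+a*(b*c) - (a*b)*c$. This already exhibits the error as a sum of $*$-words in $a,b,c$ with integer coefficients $\pm 1$, in which both $a$ and $b$ occur and $c$ occurs exactly once, namely the words $a*(b*c)$ and $(a*b)*c$ at ``depth two.'' The only issue is that this is not yet a closed formula: the words $a*(b*c)$ and $(a*b)*c$ themselves are not linear, and more importantly we have not yet replaced every occurrence of ``$a+b$ acting'' by ``$a$ acting plus $b$ acting.'' So the real content is to iterate.

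First I would set up the induction carefully. Fix $n$ and $k$ and let $N=n$, so that every product of $N+1$ elements of $A$ under $*$ vanishes (using $A^{N+1}=0$). I claim that for each $m$ with $1\le m\le N$ there is a finite integer combination $\sum_{w} \beta^{(m)}_w w\langle a,b,c\rangle$, over non-associative words $w$ in which $c$ appears once and both $X,Y$ appear, such that
\[
(a+b)*c = a*c + b*c + \sum_{w}\beta^{(m)}_w w\langle a,b,c\rangle + R_m,
\]
where $R_m$ is a sum of $*$-words each of which is linear in $c$, contains at least one factor ``$(a+b)$-as-a-block'' acting, and has total $*$-degree $\ge m$. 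Since $A^{N+1}=0$, once $m>N$ we get $R_m=0$ and we are done, setting $\beta_w=\beta^{(N+1)}_w$. The inductive step takes each term of $R_m$, locates an outermost occurrence of $(a+b)$ acting on something, and applies the identity $(a+b)*x = a*x+b*x+a*(b*x)-(a*b)*x$ together with the right-linearity $(a+b)*x$ being already handled; the terms $a*(b*x)$ and $(a*b)*x$ raise the $*$-degree by one and either become genuine words in $a,b,c$ (contributing to $\sum\beta_w w$) or still contain an $(a+b)$-block deeper inside (contributing to $R_{m+1}$). One must check that the newly produced genuine words still have both $X$ and $Y$ occurring: this holds because the identity replaces one ``$(a+b)$'' by a combination each of whose terms involves $a$ (and the word already involved $b$ from elsewhere, or else $b$ appears via the $a*(b*x)$, $(a*b)*x$ terms themselves).

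The hypotheses $a,b\in p^kA$, $p^{(p-1)k}A=0$, and $p^iA$ an ideal are what guarantee the whole process is finite \emph{with coefficients independent of $A$}: by Lemma~\ref{5}, any $*$-word containing $r$ of the letters from $\{a,b\}$ (each lying in $p^kA$) lies in $p^{rk}A$, which is zero once $rk\ge (p-1)k$, i.e.\ once $r\ge p-1$. Hence in the above iteration every word in which $X,Y$ together occur $p-1$ or more times may be dropped, so only finitely many words $w$ survive, and the surviving coefficients $\beta_w$ are determined purely by the formal manipulation of the brace identities — they do not see the particular brace $A$. Thus the same integers $\beta_w$ work for every brace in the stated class, which is exactly the claim. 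I expect the main obstacle to be bookkeeping: showing that the iteration, pruned both by the degree bound $N+1$ (from $A^{N+1}=0$) and by the letter-count bound $p-1$ (from $p^{(p-1)k}A=0$), really does terminate after finitely many universal steps and that at each step the ``genuine word'' terms produced always contain both $X$ and $Y$; the algebra of each individual step is just the single brace identity and is routine.
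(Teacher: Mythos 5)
There is a genuine gap, and it sits at the very engine of your iteration. From the brace axiom $(a+b+a*b)*c=a*c+b*c+a*(b*c)$ you deduce $(a+b)*c=a*c+b*c+a*(b*c)-(a*b)*c$ by moving the summand $a*b$ out of the left factor; but that step uses the left distributivity $(u+v)*c=u*c+v*c$ (with $u=a+b$, $v=a*b$), which is exactly the identity whose failure the lemma is meant to quantify. The formula you write is therefore not a brace identity: it is only correct modulo words of degree $\geq 4$. A quick sanity check: since $(a+b)*c=(b+a)*c$, exactness of your formula would force $a*(b*c)-(a*b)*c=b*(a*c)-(b*a)*c$ in every left brace, which is again equivalent to a left-distributivity statement that fails for braces that are not two-sided. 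Worse, as described your iteration terminates immediately --- after one application of the identity the terms $a*(b*c)$ and $(a*b)*c$ contain no $(a+b)$-block, so $R_m=0$ and you would conclude the (false) exact degree-$3$ formula. The remainder $R_m$ never records the error committed by the identity itself.

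The correct engine (this is what the proof of Lemma $4.2$ of \cite{asas}, to which the paper defers, actually runs on) is the substitution $a+b=a\circ d$ with $d=\lambda_a^{-1}(b)$, giving the genuine identity $(a+b)*c=a*c+d*c+a*(d*c)$; one then expands $d=b-a*d=b-a*b+a*(a*b)-\cdots$, which terminates by left nilpotency, and recursively treats the resulting sums $(b+e)*c$ with $e$ of strictly higher degree. Your surrounding bookkeeping --- the degree filtration, the use of $A^{n+1}=0$ and of Lemma \ref{5} together with $p^{(p-1)k}A=0$ to kill all words with at least $p-1$ letters from $\{a,b\}$ and to make the coefficients $\beta_w$ universal --- is sound and matches the intended argument, but it cannot rescue a recursion built on an invalid identity. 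Replace the false identity by the $\lambda_a^{-1}$ expansion and the rest of your plan goes through.
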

\begin{proof}
 The same argument as in the proof of Lemma $4.2$, \cite{asas} works. The following small changes need to be done to the proof of Lemma $4.2$ in \cite{asas}:  it should be written  ``products of any $n$ elements from the set $\{a,b\}$'' instead of products of $p$ elements from the set $\{a,b\}$, also the summation in the formulas should end at $2n$ instead of $2p$, and it should be added that the final results depends also on $n$ and $k$. In line $2$ it should be ``$a,b\in p^{k}A$'' instead of ``$a,b\in pA$''. In the last line Lemma \ref{5} should be used instead of Proposition $4.1$ \cite{asas}. 

\end{proof}

\begin{corollary}\label{43}
 Let $p$ be a prime number and let $m, n, k$ be natural numbers. Let $W'$ be the set of non-associative words in variables $X,Z$ where $Z$ appears only once at the end of each word and $X$ appears at least twice in each word. 
Then there are integers $\gamma _{w}$ for $w\in W'$, such that only a finite number of them is non-zero and the following holds: For each brace $(A, +, \circ )$ 
 of cardinality $p^{n}$ and such that $p^{i}A$ is an ideal in $A$ for every $i$,
and  $p^{(p-1)k}A=0$ 
 and for all $a\in p^{k}A$, $c\in A$ we have 
\[(a+b)*c=a*c+b*c+\sum_{w\in W} \beta _{w}w<a,b,c>,\]
 where $w(a,b,c)$ denote the specialisation of the word $w$ for $X=a, Y=b, Z=c$ and the multiplication in $w<a,b,c>$ is the same as the operation $*$ in brace $A$.
\end{corollary}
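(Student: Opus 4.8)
The plan is to derive Corollary \ref{43} from Lemma \ref{42} by specialising the variable $Y$ to the variable $X$, i.e., by setting $b=a$. First I would observe that in Lemma \ref{42} the hypotheses are exactly those of Corollary \ref{43} plus the (now redundant) assumption $b\in p^kA$; since we are going to put $b=a$ with $a\in p^kA$, all the hypotheses are automatically met. Substituting $b=a$ into the identity
\[(a+b)*c=a*c+b*c+\sum_{w\in W}\beta_w w\langle a,b,c\rangle\]
gives
\[(2a)*c=2(a*c)+\sum_{w\in W}\beta_w w\langle a,a,c\rangle.\]

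Next I would collapse the words. After the specialisation $X=Y=a$, every word $w\in W$ becomes a non-associative word in the two letters $X,Z$ with $Z$ occurring exactly once (at the end, by the structure of $W$) and $X$ occurring at least twice (because in $W$ both $X$ and $Y$ appear at least once, so after identifying them $X$ appears at least twice). Thus each $w\langle a,a,c\rangle$ equals $w'\langle a,c\rangle$ for a unique $w'\in W'$; collecting the coefficients $\beta_w$ over all $w\in W$ that map to the same $w'$ defines integers $\gamma_{w'}$, of which only finitely many are nonzero because only finitely many $\beta_w$ are. This yields
\[(2a)*c=2(a*c)+\sum_{w'\in W'}\gamma_{w'}w'\langle a,c\rangle.\]

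Finally I would remove the factor $2$. Since $p>2$, the integer $2$ is invertible modulo $p^{(p-1)k}$, hence (because $p^{(p-1)k}A=0$) multiplication by $2$ is a bijection on $A$; writing $a=2a''$, or equivalently replacing $a$ by a multiple of $a$ times the inverse of $2$ and using additivity of $*$ in the first argument together with Lemma \ref{42}-style bookkeeping, lets one re-express the identity in the form
\[(a+b)*c = a*c + b*c + \sum_{w\in W}\beta_w w\langle a,b,c\rangle\]
as stated. (I note that as printed the Corollary's displayed formula still refers to $W$, $b$ and $\beta_w$; I would either correct it to read $a*(a*\cdots)$ type terms $\sum_{w'\in W'}\gamma_{w'}w'\langle a,c\rangle$ on the right, or keep it verbatim and simply point out the substitution $Y\mapsto X$ that produces it.) The only genuine subtlety is the combinatorial claim that identifying $X$ with $Y$ sends $W$ into $W'$ with the correct multiplicity and that the resulting sum is still finite; this is immediate from the defining conditions on $W$ and $W'$ and from finiteness of the support of $(\beta_w)_{w\in W}$. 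I expect no serious obstacle: the whole statement is a formal corollary of Lemma \ref{42}, obtained by the substitution $b=a$ and a relabelling of words, exactly as Corollary $4.3$ follows from Lemma $4.2$ in \cite{asas}.
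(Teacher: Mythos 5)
Your core idea --- specialise $Y\mapsto X$ (i.e.\ set $b=a$) in Lemma \ref{42} and collect the resulting words of $W$ into words of $W'$ --- is the right starting point and matches the route the paper takes (it simply imports the proof of Corollary $4.3$ from \cite{asas}, which is built on exactly this kind of specialisation of the two-variable identity). You are also right that the displayed formula in the Corollary as printed is a verbatim copy of Lemma \ref{42}'s conclusion and cannot be the intended statement; the intended conclusion, as in Corollary $4.3$ of \cite{asas}, is an identity of the form $(ma)*c=m(a*c)+\sum_{w\in W'}\gamma_w w\langle a,c\rangle$ for the natural number $m$ that is introduced in the hypotheses but never used in the garbled display.

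That said, there are two genuine gaps. First, your argument only treats $m=2$. The general case requires an induction on $m$: writing $(ma)*c=((m-1)a+a)*c$ and applying Lemma \ref{42} produces correction words $w\langle(m-1)a,a,c\rangle$ in which $(m-1)a$ sits in \emph{left} arguments of $*$, where $*$ is not additive; these must themselves be recursively re-expanded, and one must argue the recursion terminates. This is precisely the point the paper flags when it replaces the termination step by ``$Y_t^{\alpha+1}\subseteq p^{kt}A=0$ for $t\geq p$, since $p^{pk}A=0$'' --- products of at least $p$ factors from $p^kA$ vanish because $p^{(p-1)k}A=0$ (together with $p^iA$ being ideals, via Lemma \ref{5}). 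Your proposal contains no trace of this recursive expansion or its termination. Second, your closing paragraph about ``removing the factor $2$'' is circular: the factor $m$ (or $2$) is supposed to remain in the conclusion, and dividing by $2$ neither recovers the printed formula nor produces anything new --- if the printed formula were really the intended conclusion, the corollary would be a literal restatement of Lemma \ref{42} and need no proof at all. A smaller point: your word-collapsing step assumes every $w\in W$ with $\beta_w\neq 0$ has $Z$ at the end, which is needed for the image to lie in $W'$ but is not part of the definition of $W$ in Lemma \ref{42}; it holds for the words actually produced by the brace identity, but it deserves a sentence.
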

\begin{proof} 
 We can use the same proof as the proof of Lemma $4.2$, \cite{asas}. The following small changes need to be done to the proof of Lemma $4.2$ in \cite{asas}: 
 Lemma \ref{42} should be used instead of Lemma $4.2$ \cite{asas}. 
 Moreover, in the lines $6$, $9$,  $17$, $18$,   write $p^{k}A$ instead of $pA$. In  the line $22$ of the proof of Corollary $4.3$  \cite{asas} write 
``this is true  for $t\geq p$, since  $Y_{t}^{\alpha +1}\subseteq p^{kt}A= 0$, since $p^{pk}A=0$ by assumption.'' instead of  ``this is true for $t\geq p$ since $Y_{t}^{\alpha +1}\subseteq p^{p}A=0$.''

\end{proof}

 We are now ready to prove the main result of this section.
\begin{proposition}\label{12345}
Let $A$ be a  brace of cardinality $p^{n}$, where $p$ is a prime number and $n$ is a natural number. Assume that $p^{i}A$ is an ideal in $A$ for every $i$.  Let $\xi =\gamma  ^{p^{n-1}}$, where $\gamma $ is a primitive root modulo $p^{n}$.
 Let $k$  be a natural number such that $p^{(p-1)k}A=0$. 
 Define the binary operation $\cdot $ on $A$ as follows.
\[a\cdot b=\sum_{i=0}^{p-2}\xi ^{p-1-i}((\xi ^{i}a)* b),\]
    for $a, b\in A$.
 Then $(a+b)\cdot c=a\cdot c+b\cdot c, a\cdot (b'+c)=a\cdot b'+a\cdot c$
and $(a\cdot b)\cdot c-a\cdot (b \cdot c)=(b\cdot a)\cdot c-b\cdot (a\cdot c),$
for every $a, b\in p^{k}A$, $b', c\in A$.
In particular, $(p^{k}A, +, \cdot )$ is a pre-Lie ring.
\end{proposition}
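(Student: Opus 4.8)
The plan is to verify the three required identities for the operation $a\cdot b=\sum_{i=0}^{p-2}\xi^{p-1-i}((\xi^{i}a)*b)$ directly, using the brace axioms together with the hypothesis that each $p^{i}A$ is an ideal (so that Corollary \ref{43} applies to elements of $p^{k}A$), mimicking the proof of Proposition $5.1$ in \cite{asas}. The two distributivity laws are the easy part. Right distributivity $a\cdot(b'+c)=a\cdot b'+a\cdot c$ for $b',c\in A$ is immediate from the brace axiom $x*(y+z)=x*y+x*z$ applied termwise. Left distributivity $(a+b)\cdot c=a\cdot c+b\cdot c$ for $a,b\in p^{k}A$ is where Corollary \ref{43} enters: expanding $(\xi^{i}(a+b))*c=(\xi^{i}a+\xi^{i}b)*c$ gives $(\xi^{i}a)*c+(\xi^{i}b)*c$ plus a correction term which is a sum of words $w\langle\xi^i a,\xi^i b,c\rangle$ in which $X=\xi^i a$ occurs at least twice; since $a\in p^{k}A$, any such word lies in $p^{2k}A$, and after summing over $i$ with the coefficients $\xi^{p-1-i}$ one must check these correction terms cancel. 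Here I would reuse the key arithmetic fact from \cite{asas}: because $\xi$ has order $p-1$ modulo $p^{n}$ (Lemma \ref{Engelxi}) and $\xi^{j}\not\equiv 1 \pmod p$ for $0<j<p-1$, the relevant geometric sums $\sum_{i}\xi^{mi}$ vanish modulo $p^{(p-1)k}$ whenever the word carries a net power $\xi^{mi}$ with $m\not\equiv 0$, and the factor $p^{2k}$ (or higher) in front of each correction word makes these vanishing-modulo-$p^{(p-1)k}A$ statements into genuine zeros in $A$ since $p^{(p-1)k}A=0$.

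The main work is the left pre-Lie identity $(a\cdot b)\cdot c-a\cdot(b\cdot c)=(b\cdot a)\cdot c-b\cdot(a\cdot c)$ for $a,b\in p^{k}A$, $c\in A$. The strategy is to expand both $(a\cdot b)\cdot c$ and $a\cdot(b\cdot c)$ into sums of iterated $*$-products of $\xi$-powers of $a$, $b$, and $c$, and to show that the difference is symmetric in $a\leftrightarrow b$. First I would write $a\cdot b=\sum_{i}\xi^{p-1-i}((\xi^i a)*b)$ and substitute once more, getting a double sum; I then need the identity $\lambda_{x}(y)*z$-type manipulations, i.e.\ the relation in a brace between $(x*y)*z$, $x*(y*z)$, and $y*(x*z)$, which is exactly the left pre-Lie-like identity satisfied by $*$ itself up to lower-order (here meaning deeper $*$-nested, hence higher $p$-power) terms. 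Concretely, $(u*v)*w = u*(v*w) - v*(u*w) + (\text{deeper products})$ — and because $a,b\in p^{k}A$, every ``deeper product'' involving two or more of the inserted $p^k A$-elements lands in $p^{2k}A$, and iterating pushes the genuinely obstructive terms past the threshold $p^{(p-1)k}A=0$. So modulo terms that are provably zero, $(a\cdot b)\cdot c - a\cdot(b\cdot c)$ reduces to an expression built from $a*(b*c)$, $b*(a*c)$ and their $\xi$-twists, and the antisymmetrization $a\leftrightarrow b$ is then visible.

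I would organize the computation exactly as in \cite{asas}: introduce the twisted elements $a_i=\xi^i a$, $b_j=\xi^j b$, track the total $\xi$-exponent of each monomial, and split the sum into the ``diagonal'' part (where the exponent bookkeeping forces the surviving terms) and the ``off-diagonal'' part (which vanishes by the geometric-sum argument of Lemma \ref{Engelxi} combined with the $p$-adic depth estimate from Lemma \ref{5} and Corollary \ref{43}). The conclusion that $(p^{k}A,+,\cdot)$ is a pre-Lie ring is then immediate: $p^{k}A$ is closed under $\cdot$ because $p^{k}A$ is an ideal (so $(\xi^i a)*b\in p^{k}A$ whenever $a\in p^{k}A$), it is an abelian group under $+$, and the three verified identities are precisely the pre-Lie axioms restricted to $p^{k}A$.

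The hard part will be the careful power-of-$p$ accounting in the left pre-Lie identity: one must be sure that after each application of the brace quasi-associativity relation, the error terms genuinely gain an extra factor $p^{k}$, so that after at most $p-1$ iterations everything lands in $p^{(p-1)k}A=0$; and simultaneously that the coefficient sums $\sum_i \xi^{mi}$ attached to the non-vanishing monomials are the ones that do \emph{not} collapse, so that the two sides really do match rather than both being zero. This is routine but delicate bookkeeping, and it is the reason the hypothesis is $p^{(p-1)k}A=0$ rather than something weaker. Since the excerpt explicitly says ``the same argument as in \cite{asas}'' suffices for the surrounding lemmas, I would present this proof as a careful adaptation of Proposition $5.1$ of \cite{asas}, indicating the substitutions ($pA\leadsto p^kA$, $p^{p-1}A=0 \leadsto p^{(p-1)k}A=0$, Proposition $4.1$ of \cite{asas} $\leadsto$ Lemma \ref{5}) rather than rederiving every monomial cancellation.
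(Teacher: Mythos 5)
Your proposal is correct and follows essentially the same route as the paper: the paper's own proof of Proposition \ref{12345} is precisely the adaptation you describe, namely re-running Proposition $5.1$ of \cite{asas} with $pA$ replaced by $p^{k}A$, $p^{p-1}A=0$ replaced by $p^{(p-1)k}A=0$, and with Lemma \ref{5}, Lemma \ref{Engelxi}, Lemma \ref{42} and Corollary \ref{43} substituted for their counterparts in \cite{asas}. Your sketch of the underlying mechanism (distributivity via the correction-word expansion, the vanishing geometric sums $\sum_i\xi^{mi}$ for the off-diagonal terms, and the $p$-adic depth bookkeeping that kills error terms once they reach $p^{(p-1)k}A=0$) matches the argument being invoked, so there is nothing further to add.
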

\begin{proof}
 It is the same proof as the proof of Proposition $5.1$  in \cite{asas} with the following small changes : write $x,y\in p^{k}A$ instead of $x,y\in pA$  
  in  lines $2$, $8$, $15$, $18$, $20$, $27$, $35$, $44$, $48$  and  in Part $2$ of the proof  in  lines $3$, $14$, 
 $16$, $17$. Similarly, write $p^{k}A$ instead of $pA$ in Part $2$ of the proof in lines $25, 32, 38, 41, 61$.  At the end of line $35$ write ``$p^{k(p-1)}A=0$''  instead of 
 ``$p^{p-1}A=0$''. Also in line $39$ write in twp places $p^{n}$ instead if $p^{p}$. In line $46$ remove ``and $n<p-1$''. 
Additionally make the following small changes:
\begin{itemize}
\item Use Lemma \ref{5} from above instead of Proposition $4.1$ from \cite{asas}.
\item Use Lemma \ref{Engelxi} above instead of Lemma $3.5$ from \cite{asas}.
\item Instead of Lemma $4.2$ and Corollary $4.3$ from \cite{asas}   use Lemma \ref{42} and Corollary \ref{43}.
\end{itemize}
\end{proof}

\subsection{Pre-Lie rings related to braces in which $p^{i}A$ and $ann(p^{i})$ are ideals.}

The following result is a generalisation of Theorem $8.1$, \cite{asas}. We denote  $[a]=[a]_{ann(p^{2k})}$.
\begin{theorem}\label{1}
Let $(A, +, \circ )$ be a brace of cardinality $p^{n}$ for some prime number $p$ and some natural number $n$. Let $k$ be a natural number such that $p^{k(p-1)}A=0$. Assume that $p^{i}A$, $ann(p^{i})$ are ideals in $A$ and $(p^{k}A)*ann(p^{2k})\subseteq ann(p^{k})$. Let $\wp^{-1}:p^{k}A\rightarrow A$ be defined as in the section \ref{pullback}. Let $\odot $ be defined as 
\[[x]\odot [y]=[\wp^{-1}((p^{k}x)*y)].\] Let $\xi =\gamma  ^{p^{n-1}}$ where $\gamma $ is a primitive root modulo $p^{n}$.
 Define a binary  operation $\bullet $ on $A/ann(p^{2k})$ as follows:
\[[x]\bullet [y]=\sum_{i=0}^{p-2} \xi ^{p-1-i} [\xi ^{i}x]\odot [y],\]
for $x,y\in A$. Then $A/ann(p^{2k})$ with the binary operations $+$ and $\bullet $ is a pre-Lie ring.
\end{theorem}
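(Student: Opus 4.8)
The plan is to reduce Theorem \ref{1} to Proposition \ref{12345}, which already constructs a pre-Lie ring structure via a $\xi$-weighted sum, and then transport that structure through the pullback map $\wp^{-1}$. First I would record the hypotheses that make everything well-defined: since $p^{i}A$, $ann(p^{i})$ are ideals and $(p^{k}A)*ann(p^{2k})\subseteq ann(p^{k})$, Lemma \ref{33} (applied with $I=ann(p^{2k})$, using $ann(p^{k})\subseteq ann(p^{2k})$) shows that $\wp^{-1}$ is additive modulo $ann(p^{2k})$, and hence the operation $\odot$ is additive in each slot on $A/ann(p^{2k})$. I would then check that $\odot$ is genuinely well-defined on cosets: if $[x]=[x']$ and $[y]=[y']$, then $(p^{k}x)*y-(p^{k}x')*y'$ lies in $ann(p^{k})\cap p^{k}A = p^{k}\cdot ann(p^{2k})$ (using that $p^{k}A$ and $ann(p^{k})$ are ideals, that $p^{k}(x-x')=0$ since $x-x'\in ann(p^{2k})$, and the hypothesis on $(p^{k}A)*ann(p^{2k})$), so $\wp^{-1}$ of it lands in $ann(p^{2k})$.

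The key computational identity I would isolate is that, on $A/ann(p^{2k})$,
\[
[x]\odot([y]\odot[z]) = [\wp^{-1}\bigl((p^{k}x)*((p^{k}y)*z)\bigr)]
\quad\text{and}\quad
([x]\odot[y])\odot[z] = [\wp^{-1}\bigl((p^{k}x)*((p^{k}y)*z)\bigr)]\text{-type terms},
\]
more precisely that $p^{k}\wp^{-1}((p^{k}x)*y) = (p^{k}x)*y$ exactly, so iterating $\odot$ just composes the $*$-products of the elements $p^{k}x, p^{k}y,\ldots \in p^{k}A$, with the $\wp^{-1}$ applied once at the outside. This means that if I set $a = p^{k}x$, $b = p^{k}y$, $c = p^{k}z$ (all in $p^{k}A$), then the operation $[x]\bullet[y] = \sum_{i=0}^{p-2}\xi^{p-1-i}[\xi^{i}x]\odot[y]$ corresponds under $\wp^{-1}$ precisely to the operation $a\cdot b = \sum_{i=0}^{p-2}\xi^{p-1-i}((\xi^{i}a)*b)$ of Proposition \ref{12345} on $(p^{k}A,+,\cdot)$. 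The plan is: left-distributivity and right-distributivity of $\bullet$ follow directly from additivity of $\odot$ in both slots (from Lemma \ref{33}); and the left pre-Lie identity $([x]\bullet[y])\bullet[z] - [x]\bullet([y]\bullet[z]) = ([y]\bullet[x])\bullet[z] - [y]\bullet([x]\bullet[z])$ follows from the corresponding identity for $\cdot$ on $p^{k}A$ (which Proposition \ref{12345} gives us, since $p^{i}A$ are ideals and $p^{(p-1)k}A=0$) by applying $\wp^{-1}$ and noting that the map $[x]\mapsto p^{k}x$ is a well-defined injection $A/ann(p^{2k}) \hookrightarrow p^{k}A$... actually onto $p^{k}A$ up to the coarser equivalence, so I must be careful.

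The main obstacle — and the step I would spend the most care on — is the bookkeeping of which annihilator the error terms land in when I expand $\bullet$ twice. A doubly-iterated $\bullet$ involves products of up to four $*$-factors after substituting the $\wp^{-1}$ expressions, and at each application of $\wp^{-1}$ one "loses" a factor of $p^{k}$; I need $p^{k(p-1)}A=0$ (equivalently enough room) to guarantee that the discrepancies genuinely vanish in $A/ann(p^{2k})$ rather than merely in some coarser quotient. Concretely, the identity for $\cdot$ on $p^{k}A$ holds as an honest equation in $p^{k}A$, and I must check that pulling it back through $\wp^{-1}$ only introduces elements of $ann(p^{2k})$: since both sides, before applying $\wp^{-1}$, are equal elements of $p^{2k}A$ (each a $*$-product of at least two elements of $p^{k}A$, hence in $p^{2k}A$ by Lemma \ref{5}), their $\wp^{-1}$-values agree modulo $ann(p^{2k})$ by the "this implies" clause of Lemma \ref{33}. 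So the strategy is: (i) well-definedness of $\odot$; (ii) additivity of $\odot$ and hence bilinearity of $\bullet$ via Lemma \ref{33}; (iii) express both sides of the pre-Lie identity for $\bullet$ as $\wp^{-1}$ applied to the two sides of the pre-Lie identity for $\cdot$ from Proposition \ref{12345} evaluated at $a=p^kx,b=p^ky,c=p^kz$, these being equal elements of $p^{2k}A$; (iv) conclude equality of cosets using the additive compatibility of $\wp^{-1}$ modulo $ann(p^{2k})$. Steps (i)–(ii) are the ideal-theoretic warm-up already done in Lemmas \ref{xyz} and \ref{33}; step (iii) is where one must verify the $\xi$-sums match up term by term, which reduces to the single identity $p^{k}\wp^{-1}(u)=u$ for $u\in p^kA$ together with $\mathbb Z$-linearity of $\wp^{-1}$ mod $ann(p^{2k})$.
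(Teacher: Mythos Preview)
Your plan is essentially the paper's own argument: establish well-definedness of $\odot$, bilinearity via Lemma~\ref{33}, and then reduce the pre-Lie identity for $\bullet$ to Proposition~\ref{12345} through the key relation $[x]\bullet[y]=[\wp^{-1}((p^kx)\cdot y)]$. Two slips should be corrected. First, in your well-definedness check you write ``$p^{k}(x-x')=0$ since $x-x'\in ann(p^{2k})$''; this is false---you only get $p^{k}(x-x')\in ann(p^{k})\cap p^{k}A$, and the argument must proceed (as the paper does) by observing that this intersection is an ideal, so the error term $e$ in $(p^kx)*y=(p^kx')*y+e$ lies in $p^{k}\cdot ann(p^{2k})$ and hence $\wp^{-1}(e)\in ann(p^{2k})$. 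Second, when you invoke Proposition~\ref{12345} for the pre-Lie identity, the third argument should be $c=z\in A$, not $c=p^kz$: after multiplying the $\bullet$-identity through by $p^{2k}$ (which is how you unwind the nested $\wp^{-1}$ in $[x]\bullet([y]\bullet[z])$), what you obtain is exactly $((p^kx)\cdot(p^ky))\cdot z - (p^kx)\cdot((p^ky)\cdot z)$ and its $(x\leftrightarrow y)$ counterpart, and Proposition~\ref{12345} is stated precisely to allow $c\in A$. With these fixes your outline coincides with the paper's proof.
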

\begin{proof} We first show that $\odot $ is a well defined binary operation on $A/ann(p^{2k})$, so that the result does not depend on the choice of coset representatives $x, y$ of cosets $[x], [y]$.
 We use an analogous proof as in Lemma $7.1$ \cite{asas}.
Observe that 
\[(p^{k}x)*y=(p^{k}(x-x')+(p^{k}x'))*y=(p^{k}x')*y+e ,\]
where $e$ belongs to the ideal generated in  the brace $A$ by the element $p^{k}(x-x')\subseteq p^{k}ann(p^{2k})=p^{k}A\cap  ann(p^{k})$ (indeed if $p^{k}c\in ann(p^{k})$ then $c\in
 ann(p^{2k})$ . By assumption $ann(p^{k}), ann(p^{k})$ are ideals in $A$.
It follows that  $e\in ann(p^{k})\cap p^{k}A=p^{k}ann(p^{2k})$, 
therefore $[x]\odot [y]-[x']\odot [y]=[\wp^{-1}(e)]\in [\wp^{-1}(p^{k}ann(p^{2k}))]\in [ann(p^{2k})]=[0]$ as required.

Observe  also that $[x]\odot [y]$ does not depend on the choice of the representative for the coset $[y]$. 
  Observe that \[[\wp^{-1}((p^{k}px)*y)]-[\wp^{-1}(( p^{k}x)*y')]=[\wp^{-1}((p^{k}x)*(y-y'))]=[0].\]
 This follows because $y-y'=ann(p^{2k})$ for some $c\in A$, and 
$(p^{k}x)*ann(p^{2k})\subseteq ann(p^{k})$ by assumptions.

  To show that $A/ann(p^{2k})$ with the binary operations $+$ and $\bullet $ is a pre-Lie ring we can use analogous proof as the proof on Theorem $8.1$ from \cite{asas} when we change $p$ to $p^{k}$ at each place, and $p^{2}$ to $p^{2k}$ and use Proposition \ref{12345} instead of Proposition $5.1$ from \cite{asas}. 
 We proceed as follows. 

$ $

{\em Part 1}.  We need to show that  $([x]+[y])\bullet [z]=[x]\bullet [z]+[y]\bullet [z]$ and
$[x]\bullet ([y]+[z])=[x]\bullet [y]+[x]\bullet [z]$ for every $x,y,z\in A$.  
Observe that the formula $[x]\bullet ([y]+[z])=[x]\bullet [y]+[x]\bullet [z]$ follows since $x*(y+z)=x*y+x*z$.
We will show now that \[([x]+[y])\bullet [z]=[x]\bullet [z]+[y]\bullet [z].\]
 Notice that $[x]\bullet [y]=[\wp ^{-1}(p^{k}x\cdot y)]$, where operation $\cdot $ is as in Proposition \ref{12345}. 
 By Proposition \ref{12345} and Lemma \ref{33}, 
\[([x]+[y])\bullet [z]=[\wp ^{-1}((p^{k}x+p^{k}y)\cdot z)]=[\wp ^{-1}(((p^{k}x)\cdot z)+((p^{k}y)\cdot z))]=\]
\[=[\wp ^{-1}((p^{k}x)\cdot z)]+[\wp ^{-1}((p^{k}y)\cdot z)]=[x]\bullet [z]+[y]\bullet [z].\]
\medskip

{\bf Part 2.} We need to show that \[([x]\bullet [y])\bullet [z]-[x]\bullet ([y]\bullet [z])=([y]\bullet [x])\bullet [z]-[y]\bullet ([x]\bullet [z]),\]
for $x, y,z\in A$. Recall that $[x]\bullet [y]=[\wp^{-1}((p^{k}x)\cdot y)].$ 
  Consequently  \[([x]\bullet [y])\bullet [z]=[\wp^{-1}((p^{k}x)\cdot y)]\bullet[z]=[\wp^{-1} (((p^{k}x)\cdot y)\cdot z)],\] and
\[[x]\bullet ([y]\bullet [z])=[x]\bullet [\wp^{-1}((p^{k}y)\cdot z) ]=[\wp^{-1}((p^{k}x)\cdot \wp^{-1}((p^{k}y)\cdot z))].\]

We need to show that \[[\wp^{-1} (((p^{k}x)\cdot y)\cdot z)]= [\wp^{-1}((p^{k}x)\cdot \wp^{-1}((p^{k}y)\cdot z))].\]
 It suffices to show that $\wp^{-1} (((p^{k}x)\cdot y)\cdot z)-\wp^{-1}((p^{k}x)\cdot \wp^{-1}((p^{k}y)\cdot z))\in ann(p^{2k})$. 
 We proceed in the same way as in \cite{asas}. 
Note that   \[p^{2k}(\wp^{-1}((p^{k}x)\cdot \wp^{-1}((p^{k}y)\cdot z)))=p^{k}((p^{k}x)\cdot \wp^{-1}((p^{k}y)\cdot z))=(p^{k}x)\cdot ((p^{k}y)\cdot z),\]

On the other hand we have
$p^{2k}(\wp^{-1} (((p^{k}x)\cdot y)\cdot z))=p^{k}(((p^{k}x)\cdot y)\cdot z).$
Note that $(p^{k}x)\cdot  y\in p^{k}A$, since, by the assumption $p^{k}A$ is an ideal in $A$.

Let $a\in p^{k}A$, $b\in A$, 
then $(na)\cdot b=n(a\cdot b)$ by Proposition \ref{12345}.
Applying this for $n=p^{k}$, $a=(p^{k}x)\cdot y$ and  $b=z$, we get that \[p^{k}(((p^{k}x)\cdot y)\cdot z)=(p^{k}((p^{k}x)\cdot y))\cdot z=((p^{k}x)\cdot (p^{k}y))\cdot z.\]
 Therefore,  \[(\wp^{-1}((p^{k}x)\cdot \wp^{-1}((p^{k}y)\cdot z)))-(\wp^{-1} (((p^{k}x)\cdot y)\cdot z))\in ann(p^{2k}).\]

By Proposition \ref{12345},  \[([x]\bullet[y])\bullet [z]-[x]\bullet  ([y]\bullet [z])=([y]\bullet [x])\bullet [z]-[y]\bullet ([x]\bullet [z]).\]
\end{proof}
\begin{corollary}
 Let $A$ be a brace satysfying preperties $1'$ and $1''$. Then $A$ satisfies assumptions of Theorem \ref{1}. 
\end{corollary}

 {\bf Remark.} Let $(A, +)$ be an additive group  of cardinality $p^{n}$ for a prime number $p$ and for a natural number $n$. Let $k$ be a natural number and let 
$(p^{k}A, \cdot, +)$ be a pre-Lie ring,  then $(A/ann (p^{4k}), \bullet, +)$ is a pre-Lie ring where we define for $a,b\in A$:  
 \[[a]_{ann(p^{4})}\bullet [b]_{ann(p^{4})}=[\wp^{-1}(\wp^{-1}((p^{k}a)\cdot (p^{k}b))]_{ann(p^{4k})}\]
 where $\wp^{-1}:p^{k}A\rightarrow p^{k}A$ is defined as in Section \ref{pullback}.
 The operation $+$ in the pre-Lie ring $(A/ann (p^{4k}), \bullet, +)$ is the same as in the factor group $A/ann(p^{4k})$. This may give a shorter proof of a less strong version of  Theorem \ref{1}. Notice however that in Theorem \ref{1}  we also proved that the operation $\odot $ is well defined, which we will  use later.

\section{Nilpotency of obtained pre-Lie rings}
 In this section we will show that  the pre-Lie rings constructed in Theorem \ref{1} 
 are left nilpotent.
 
  Recall that pre-Lie ring $A$  is {\em  nilpotent}   if for some $n\in \mathbb N$ all products of $n$ elements in $A$ are zero (nilpotent pre-Lie rings are also  called {\em strongly nilpotent}). We say that a pre-Lie ring $A$ is {\em left nilpotent} if there is $n$ such that $x_{1}\cdot (x_{2}\cdots  \cdot (x_{n-1}\cdot x_{n})\cdots )=0$ for all  $x_{1}, x_{2}, \ldots , x_{n}\in A$.

 \begin{proposition}\label{2} 
Let $(A, +, \circ )$ be a brace of cardinality $p^{n}$for some prime number $p$ and some natural number $n$. Let notation and  assumptions be as in  Theorem \ref{1}.
 Assume that   $(p^{k}A)*A^{j}\subseteq p^{k}A^{j+1}$ for $j=1,2, \ldots , n$.
 Then the following holds:
\begin{enumerate}
\item The pre-Lie ring  $P=(A/ann(p^{2k}), \bullet , +)$ constructed in Theorem \ref{1} is left nilpotent.
\item  Moreover, if brace $A$ satisfies $A^{c}\subseteq  pA$ for some $c$ then the pre-Lie ring $P$ satisfies $P^{c}\subseteq pA$ for the same $c$.  
\end{enumerate}  
\end{proposition}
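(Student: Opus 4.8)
The plan is to transfer the relevant nilpotency data from the brace $A$ to the pre-Lie ring $P = (A/ann(p^{2k}), \bullet, +)$ through the operations $\odot$ and $\bullet$ defined in Theorem \ref{1}. First I would record the key algebraic identity $[x]\bullet[y] = [\wp^{-1}((p^k x)\cdot y)]$, where $\cdot$ is the pre-Lie multiplication on $p^k A$ from Proposition \ref{12345}, and $a\cdot b = \sum_{i=0}^{p-2}\xi^{p-1-i}((\xi^i a)*b)$. The crucial observation is that $a\cdot b$ is an integer linear combination of iterated $*$-products with one leading factor $a$ (well, $\xi^i a$, which is still in $p^k A$ when $a\in p^k A$), so the hypothesis $(p^k A)*A^j \subseteq p^k A^{j+1}$ gives $(p^k A)\cdot A^j \subseteq p^k A^{j+1}$ as well. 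Iterating this, a left-normed $\cdot$-product of $m$ elements from $p^k A$ lands in $p^k A^{m}$ (more precisely, a left-normed product $a_1\cdot(a_2\cdot(\cdots\cdot a_m))$ with each $a_i\in p^k A$ lies in $p^k A^{m}$, using also that $p^k A$ is an ideal so Lemma \ref{5}-type absorption applies).

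Next I would translate this into a statement about $\bullet$. Unwinding the definitions, a left-normed $\bullet$-product $[x_1]\bullet([x_2]\bullet(\cdots\bullet[x_m]))$ equals $[\wp^{-m+1}(\,\cdot\text{-product of the }p^k x_i\,)]$ up to the integer coefficients coming from the $\xi$-sums and the pullbacks; the point is that each successive $\odot$ contributes one more leading factor of the form $p^k(\text{something})$ after applying $\wp^{-1}$. So the left-normed $\bullet$-product of $m$ elements is represented by $\wp^{-(m-1)}$ applied to an element of $p^k A^{m-1+?}$ — I would check the exact bookkeeping, but the upshot is that for $m$ large enough (say $m = n+1$, so that $A^{n+1} = 0$ by Rump's result \cite{Rump}), the inner $\cdot$-product already lies in $p^k A^{n+1} = 0$, hence its $\wp^{-1}$-pullbacks lie in $ann(p^{2k})$, i.e. the $\bullet$-product is $[0]$. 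This proves part (1): $P$ is left nilpotent of index at most $n+1$ (or whatever precise bound the bookkeeping yields).

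For part (2), the same computation specializes: if $A^c \subseteq pA$, then any $\cdot$-product of $c$ elements of $p^k A$ lies in $p^k A^c \subseteq p^k(pA) = p^{k+1}A$, and a left-normed $\bullet$-product of $c$ elements is represented by $\wp^{-(c-1)}$ of such an element; since $p^k\cdot\wp^{-1}(p^{k+1}A\text{-element})$ recovers something in $p^{k+1}A$, unwinding the pullbacks shows the representative lies in $pA + ann(p^{2k})$, so $P^c \subseteq [pA] = pA/ann(p^{2k})$, identified with $pA$ under the natural correspondence. I would phrase this carefully since $P^c$ denotes the span of all (not just left-normed) products of $c$ elements; but because $\bullet$ is bilinear and the pre-Lie identity lets one re-express any bracketing in terms of left-normed ones modulo lower-degree corrections is not available here — instead I would argue directly that \emph{every} $\bullet$-product of $c$ elements, in any bracketing, unwinds to $\wp^{-(c-1)}$ of a $*$-product of $c$ elements from $p^k A$, which sits in $p^k A^c$; the bracketing only affects where the $p^k$ factors and $\wp^{-1}$'s are inserted, never the total count.

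The main obstacle I anticipate is the precise bookkeeping of how the pullback function $\wp^{-1}$ interacts with $p$-divisibility across nested $\odot$ operations: each $\odot$ applies $\wp^{-1}$ (dividing by $p^k$) but also multiplies by $p^k$ inside, so one must verify these cancel correctly modulo $ann(p^{2k})$ at every level of nesting, using repeatedly the fact (already used in Theorem \ref{1}) that $p^k c \in ann(p^k)$ forces $c \in ann(p^{2k})$, together with $p^k A \cap ann(p^k) = p^k\cdot ann(p^{2k})$. Since this is exactly the kind of manipulation carried out in the proof of Theorem \ref{1} and in \cite{asas}, I expect it to go through by the same method, but it is where the care is needed.
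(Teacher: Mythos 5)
Your proposal is correct and uses essentially the same idea as the paper: the hypothesis $(p^{k}A)*A^{j}\subseteq p^{k}A^{j+1}$ makes each application of $\odot$ (hence of $\bullet$) raise the degree by one, so after $n+1$ left-normed factors one lands in $A^{n+1}=0$ by Rump's result, and the $A^{c}\subseteq pA$ case is the same computation stopped at degree $c$. The paper's proof is shorter because it avoids the iterated-pullback bookkeeping you worry about: it proves the single quotient-level step $[a]\bullet[b]\in[A^{j+1}]$ for $b\in A^{j}$ (using that $\wp^{-1}(p^{k}c)-c\in ann(p^{k})\subseteq ann(p^{2k})$, so $[\wp^{-1}((p^{k}a)*b)]\in[A^{j+1}]$) and then simply iterates this inclusion inside $A/ann(p^{2k})$, never composing pullbacks.
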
 
  \begin{proof}   Let $b\in A^{j}$ for some $j$, 
  $(p^{k}a) *b \in p^{k}A^{j+1}$ by assumptions.
   It follows that  $[\wp^{-1}((p^{k}a) * b)] \in [A^{j+1}]$.
       Therefore, by the formula for operation $\bullet $ from Theorem \ref{1}, we get 
 \[[a]\bullet [b] \in [A^{j+1}],\] for  $b\in  A^{j}$, $a\in A$. Consequently,
\[[b_{1}] \bullet  ([b_{2}]\bullet ( \cdots  ([b_{n}] \bullet  [b_{n+1}]) ))\in [A^{n+1}]= 0,\] for all $b_{1}, . . . b_{n+1}\in  A$, since  $A^{n+1}=0$ in every brace of cardinality $p^{n}$ for a prime $p$, by a result of Rump \cite{rump}.

 Notice that this immediately implies that $P^{c}\in pA$ provided that   
 the brace $A$ satisfies $A^{c}\in pA.$

\end{proof} 
\begin{proposition}\label{helo} Let $(A, +, \circ )$ be a brace of cardinality $p^{n}$for some prime number $p$ and some natural number $n$. Suppose that $A$ satisfies properties $1'$ and $1''$. Then $A$ satisfies assumptions of Theorem \ref{1}.
 Moreover, $(p^{i}A)*A^{j}\subseteq p^{i}A^{j+1}$ for $i, j=1,2, \ldots , n$.
 Consequently, $A$ satisfies assumptions of Proposition \ref{2}.
\end{proposition}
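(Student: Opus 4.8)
\textbf{Proof proposal for Proposition \ref{helo}.}

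The plan is to verify, one at a time, the three sets of hypotheses that Theorem \ref{1} and Proposition \ref{2} require, drawing on the structural results already established for braces satisfying properties $1'$ and $1''$. First I would invoke Lemma \ref{xyz}: it already states that under properties $1'$ and $1''$, the sets $p^{i}A$ and $ann(p^{i})$ are ideals in $A$ for every $i$, that $p^{i}A = A^{\circ p^{i}}$, and that $(p^{i}A)*ann(p^{j}) \subseteq ann(p^{j-i})$ for $j \geq i$. Taking $i = k$ and $j = 2k$ in the last inclusion gives precisely $(p^{k}A)*ann(p^{2k}) \subseteq ann(p^{k})$, which is the remaining nontrivial hypothesis of Theorem \ref{1}. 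Thus the first claim — that $A$ satisfies the assumptions of Theorem \ref{1} — follows immediately, and I would state this in a single line referencing Lemma \ref{xyz}.

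Next I would prove the key new inclusion $(p^{i}A)*A^{j} \subseteq p^{i}A^{j+1}$ for all $i,j \geq 1$. The idea is to combine the fact that $p^{i}A = E_{i}$ (from Proposition \ref{333}, or equivalently from Lemma \ref{xyz}) with the $\lambda$-multiplicativity $\lambda_{a^{\circ p}\circ c^{\circ p}} = \lambda_{a^{\circ p}}\lambda_{c^{\circ p}}$, reducing the problem — exactly as in the proof of Lemma \ref{xyz} — to showing $a^{\circ p}*A^{j} \subseteq pA^{j+1}$ for $a \in E_{i-1}$, and then iterating on $i$. For the base case one expands $a^{\circ p}*b$ via Lemma \ref{14} as $\sum_{l=1}^{p-1}\binom{p}{l}e_l'(a,b) + e_p'(a,b)$; the terms with $l<p$ are divisible by $p$ and lie in $pA^{j+1}$ since $b \in A^{j}$ and $e_l'(a,b) \in A^{l+j} \subseteq A^{j+1}$, while the last term $e_p'(a,b) = e_{p-1}'(a, a*b)$ has $a*b \in A^{j+1}$ and, by property $1'$ (applied to the subword of length $\lfloor (p-1)/4 \rfloor < p-1$ of $a$'s), $e_{(p-1)/2}'(a, a*b) \in pA^{j+1}$, hence $e_p'(a,b) \in pA^{j+1}$. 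The inductive step repeats this, using the inductive hypothesis $b'*A^{j} \subseteq p^{i-1}A^{j+1}$ for $b' \in E_{i-1}$ exactly in the way the analogous step of Lemma \ref{xyz} handled the $ann$ case. Finally, $(p^{i}A)*A^{j} \subseteq p^{i}A^{j+1}$ with $i=j=k$ (or the generic form) is what Proposition \ref{2} asks for, so the last assertion follows.

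The main obstacle I anticipate is bookkeeping the exponents of $p$ correctly in the $e_p'$ term when iterating the induction on $i$ — one must make sure that property $1'$, which only controls a product of $\lfloor (p-1)/4 \rfloor$ copies of $a$, is applied to a short enough subword so that the remaining factors can be absorbed, and that each application genuinely produces one extra factor of $p$ rather than merely preserving divisibility. This is the same technical point that appears in Lemma \ref{xyz} and in the ``additional explanations'' at the end of the proof of Theorem \ref{f(a)}, so I would cite those computations rather than redo them, noting only the trivial change that the ideal $ann(p^{j})$ is replaced throughout by $A^{j}$ (which is stable under $*$ by elements of $A$ on either side, just like $ann(p^{j})$). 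The remaining parts of the argument are purely formal consequences of Lemma \ref{xyz} and Proposition \ref{333}.
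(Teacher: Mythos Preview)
Your proposal follows essentially the same route as the paper: invoke Lemma~\ref{xyz} to get the Theorem~\ref{1} hypotheses, then prove $(p^{i}A)*A^{j}\subseteq p^{i}A^{j+1}$ by induction on $i$ via the Lemma~\ref{14} expansion of $(a')^{\circ p}*b$ into $\sum_{l<p}\binom{p}{l}e_l'(a',b)+e_p'(a',b)$. Two small differences are worth noting. First, the paper does not use $p^{i}A=E_{i}$ together with $\lambda$-multiplicativity to reduce to generators; instead it appeals to Corollary~\ref{bezdowodu}, which gives $p^{i}A=\{a^{\circ p^{i}}:a\in A\}$ outright, so every element of $p^{i}A$ is already a single $p^{i}$-th power and the reduction step disappears. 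Second, in the inductive step the paper handles the $e_{p}'$ term not via property~$1'$ but by applying the inductive hypothesis $a'*A^{j}\subseteq p^{i}A^{j+1}$ twice in succession, obtaining $e_{p}'(a',A^{j})\subseteq p^{2i}A^{j+1}\subseteq p^{i+1}A^{j+1}$; this is slightly slicker than your suggestion of reinvoking property~$1'$ inside the induction.

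One genuine bookkeeping point in your write-up: in the base case you assert that property~$1'$ yields $e_{(p-1)/2}'(a,a*b)\in pA^{j+1}$, but property~$1'$ as stated only gives membership in $pA$, not in $pA^{j+1}$. To land in $pA^{j+1}$ you must arrange the factor of $p$ to appear \emph{inside} enough layers of $a*(\cdots)$: write $e_{p}'(a,b)=e_{p-m}'\bigl(a,e_{m}'(a,b)\bigr)$ with $m=\lfloor(p-1)/4\rfloor$, use property~$1'$ on the inner block to get $e_{m}'(a,b)=pd$, and then pull $p$ through the outer $p-m$ iterations of $a*(\cdot)$ so that $e_{p}'(a,b)=p\,e_{p-m}'(a,d)\in pA^{p-m+1}\subseteq pA^{j+1}$ (the paper itself is terse here, writing only ``follows from Lemma~\ref{14}''). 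With that correction your argument matches the paper's.
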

\begin{proof}
 By Lemma \ref{xyz} brace $A$ satisfies assumptions of Theorem $1$. It remains to show that 
 $(p^{i}A)*A^{j}\subseteq p^{i}A^{j+1}$ for $j=1,2, \ldots , n$.
By Lemma  \ref{xyz}, $p^{k}A=A^{\circ p^{k}}$. Moreover, by Lemma \ref{bezdowodu}, $p^{k}A=\{a^{\circ p^{k}}: a\in A\}$. 
It suffices to show that 
 $a^{\circ p^{i}}*A^{j}\subseteq p^{i}A^{j+1}$ for $j=1,2, \ldots , n$ for $i=1,2, \ldots $.
  For $i=1$ it follows from Lemma \ref{14} applied for $j=p$. 
 Suppose that the result holds for some $i$, so 
  $a^{\circ p^{i}}*A^{j}\subseteq p^{i}A^{j+1}$.
 Denote $a'= a^{\circ p^{i}}$. We need to show that \[a'^{\circ p}*A^{j}\subseteq p^{i+1}A^{j+1}.\]
 By Lemma \ref{14} $a'^{\circ p}*A^{j}=\sum_{i=1}^{p-1}{p\choose i}e_{i}'(a', A^{j})+e_{p}'(a', A^{j})$. Notice that ${p\choose i}$ is divisible by $p$ for $0<i<p$ and by the inductive assumption $a'*A^{j}\subseteq p^{i}A^{j+1}$, hence  $\sum_{i=1}^{p-1}{p\choose i}e_{i}'(a', A^{j})\subseteq p^{i+1}A^{j+1}$. Observe also that $e_{p}'(a', A^{j})=
e_{p-1}'(a', a'*A^{j})\subseteq e_{p-1}'(a', p^{i}A^{j+1})\subseteq e_{p-2}'(a', p^{i}\cdot (a'*A^{j+1}))\subseteq 
 p^{2i} A^{j+1}\subseteq p^{i+1}A^{j+1}$ by the inductive assumption (note that $p\geq 3$ as otherwise the property $1'$ does not hold).  
\end{proof}

\begin{proposition}\label{bbb} Let $A$ be a brace of cardinality $p^{n}$ and such that for some $c$ we have  
$A^{c}\in pA$ and $A*(A*\cdots A*ann(p^{i}))\subseteq  p\cdot ann(p^{i})$ where $A$ appears $c$-times in this expression.
 
 Let $P=(A/ann(p^{2k}),+, \bullet )$ be the pre-Lie ring constructed in Theorem \ref{1}. Denote,   
 ${\bar {ann} }(p^{i}):\{a\in P: p^{i}a=0\}$.
Then,   $P\bullet(P\bullet (\cdots (P\bullet {\bar {ann}} (p^{i}))))\subseteq p\cdot {\bar {ann} }(p^{i})$, where $P$ appears $c$ times in this expression.
\end{proposition}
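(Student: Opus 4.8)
The plan is to mirror the pattern already used to prove the left-nilpotency of $P$ in Proposition~\ref{2}, but now tracking how the $\bullet$-operation interacts with $p^{i}$-torsion. The key input is that $\bullet$ is built from $\odot$ via $[a]\bullet[b]=\sum_{i=0}^{p-2}\xi^{p-1-i}[\xi^{i}a]\odot[b]$, and $[x]\odot[y]=[\wp^{-1}((p^{k}x)*y)]$. So the first thing I would establish is a torsion-tracking lemma for $\odot$: if $[b]\in\bar{ann}(p^{i})$ in $P=A/ann(p^{2k})$ — that is, $p^{i}b\in ann(p^{2k})$, i.e. $b\in ann(p^{2k+i})$ — then I want to understand where $[a]\odot[b]$ lands. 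We have $(p^{k}a)*b$, and since $p^{k}A$ is an ideal and $ann$-sets are ideals with $p^{k}A * ann(p^{j})\subseteq ann(p^{j-k})$ (available from Lemma~\ref{xyz} under properties $1',1''$, or assumed in Theorem~\ref{1}), one gets $(p^{k}a)*b\in ann(p^{2k+i-k})=ann(p^{k+i})$. Applying $\wp^{-1}$ divides by $p^{k}$, so $\wp^{-1}((p^{k}a)*b)\in ann(p^{i})$ modulo the ambiguity of $\wp^{-1}$, and that ambiguity lies in $ann(p^{k})\subseteq ann(p^{2k})$, hence $[a]\odot[b]\in\bar{ann}(p^{i})$. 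In other words, $\odot$ (and hence $\bullet$) does not increase $p$-torsion order — it weakly preserves $\bar{ann}(p^{i})$.

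Next I would bring in the hypothesis $A*(A*\cdots*A*ann(p^{i}))\subseteq p\cdot ann(p^{i})$ with $c$ copies of $A$, together with $A^{c}\subseteq pA$. The idea is that $c$-fold $\bullet$-products should translate into $c$-fold (or more) $*$-products of lifts, where the extra factors of $p^{k}$ coming from the $\odot$'s only help. Concretely, unwinding $P\bullet(P\bullet(\cdots(P\bullet\bar{ann}(p^{i}))))$ with $c$ copies of $P$: each $\bullet$ contributes, through $\odot$, a factor $(p^{k}\,\cdot\,)*(\,\cdot\,)$, so after $c$ nested applications we are looking at $\wp^{-c\text{-times}}$ applied to an expression that is a $\mathbb{Z}$-linear combination of $c$-fold $*$-products of elements of $p^{k}A$ (in the left slots) with a representative $b$ of an element of $\bar{ann}(p^{i})$, i.e.\ $b\in ann(p^{2k+i})$, in the rightmost slot. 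Using Lemma~\ref{5} (products of elements from $p^{k}A$ with anything stay in high powers of $p$) together with the hypothesis $A*(\cdots*A*ann(p^{2k+i}))\subseteq p\cdot ann(p^{2k+i})$ for $c$ copies of $A$, the innermost $c$-fold product lands in $p\cdot ann(p^{2k+i})\cap p^{ck}A$. Now peeling off the $c$ outermost $\wp^{-1}$'s: each division by $p^{k}$ lowers both the $p$-power and the torsion bound by $k$, but we started with $ck$ extra $p$-powers and only $c$ of them, so we comfortably stay inside $p\cdot ann(p^{k+i})$ after accounting, which maps into $p\cdot\bar{ann}(p^{i})$ in $P=A/ann(p^{2k})$ since the leftover ambiguity of the $\wp^{-1}$'s is absorbed into $ann(p^{2k})$. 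The single surviving factor of $p$ from the hypothesis is what gives the ``$p\cdot$'' in the conclusion.

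Concretely, the steps in order: (1) translate $\bar{ann}(p^{i})\subseteq P$ into $ann(p^{2k+i})\subseteq A$ and note $p^{k}A*ann(p^{j})\subseteq ann(p^{j-k})$; (2) prove the one-step lemma $[a]\odot[b]\in\bar{ann}(p^{i})$ for $[b]\in\bar{ann}(p^{i})$, and similarly the improved $p\cdot\bar{ann}$ statement for the $c$-fold product by unwinding the $\odot$'s; (3) expand the $c$-fold $\bullet$-product via the $\xi$-formula into a finite $\mathbb{Z}$-linear combination of $c$-fold $\odot$-products (the $\xi^{j}$'s are just integers, so they do not affect membership in ideals); (4) write each $c$-fold $\odot$-product as $\wp^{-1}(\cdots\wp^{-1}(\text{$c$-fold $*$-product of }p^{k}A\text{ elements ending in }b)\cdots)$; (5) apply the hypothesis on $c$-fold $*$-products into $p\cdot ann$ together with Lemma~\ref{5}, then strip the $\wp^{-1}$'s one at a time, checking the $p$-power and torsion bookkeeping each time; (6) conclude that the result lies in $p\cdot\bar{ann}(p^{i})$.

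I expect the main obstacle to be step (5), the careful bookkeeping when stripping the nested $\wp^{-1}$'s: one must confirm that dividing by $p^{k}$ a total of $c$ times never drops below the torsion level $p^{i}$ that we need, and that the ambiguity introduced at each $\wp^{-1}$ stage (which is $ann(p^{k})$-valued, and must be tracked as it passes through subsequent $*$-multiplications by $p^{k}A$ elements) stays inside $ann(p^{2k})$ so that it vanishes in $P$. This is the same style of argument as in the ``additional explanations'' at the end of the proof of Theorem~\ref{f(a)}, and I would handle it by the same device: since every left factor carries a $p^{k}$, each $*$-multiplication raises the $p$-adic valuation by $k$ (Lemma~\ref{5}), so the net balance after $c$ multiplications and $c$ divisions is nonnegative, and in fact positive by the extra $p$ from the hypothesis.
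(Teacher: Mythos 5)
Your overall shape (unwind $\bullet$ into $\odot$'s, then into $*$-products ending in a representative of $\bar{ann}(p^{i})$, and invoke the hypothesis on $c$-fold $*$-products at the end) matches the paper's, but there is a genuine gap at your step (5), exactly where you predicted trouble. You establish two facts about the $c$-fold $*$-product $z=(p^{k}a_{1})*(\cdots *((p^{k}a_{c})*b))$ by two \emph{separate} arguments: $z\in p^{ck}A$ (Lemma \ref{5}) and $z\in p\cdot ann(p^{2k+i})$ (the hypothesis). But $p^{ck}A\cap p\cdot ann(p^{2k+i})$ is in general strictly larger than $p^{ck}\bigl(p\cdot ann(p^{2k+i})+ann(p^{2k})\bigr)$, so knowing only that $p^{ck}w=z$ with $z$ in that intersection does not place $w$ in $p\cdot ann(p^{2k+i})+ann(p^{2k})$, which is what $[w]\in p\cdot\bar{ann}(p^{i})$ requires. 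Already in $A=\mathbb{Z}/p^{N}$ with $N$ large: an element $z=p^{N-2k-i+1}g$ (with $g$ a generator) lies in $p^{ck}A\cap p\cdot ann(p^{2k+i})$, yet its $p^{ck}$-th ``root'' $p^{N-2k-i+1-ck}g$ is not in $p\cdot ann(p^{2k+i})+ann(p^{2k})=p^{N-2k-i+1}A$. Your closing remark that ``the net balance after $c$ multiplications and $c$ divisions is nonnegative'' conflates the $p$-adic valuation bound with the torsion bound; the conclusion needs them to be realized by a \emph{single} decomposition $z=p^{ck}u$ with $u$ itself in $p\cdot ann(\cdot)$, and your argument never produces one.

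The paper closes exactly this hole by proving the simultaneous containment in the form ``$p^{k}$ times an element of the right set.'' It introduces $Q_{t,j}=A*(A*(\cdots(A*ann(p^{j}))))$ and shows by induction, using Lemma \ref{14} to expand $a^{\circ p}*y$ (divisibility of $\binom{p}{j}$ for the middle terms, the hypothesis for the tail term $e_{p}'$), that $a^{\circ p^{k}}*Q_{t,j}\subseteq p^{k}Q_{t+1,j}$. Combined with $p^{k}A=\{a^{\circ p^{k}}:a\in A\}$ (Corollary \ref{bezdowodu}), every $\odot$-step then reads $[d]\odot[Q_{t,j}]\subseteq[Q_{t+1,j}]$ with the $\wp^{-1}$ exactly cancelling the exhibited factor $p^{k}$, and after $c$ steps one lands in $[Q_{c,i}]\subseteq p\cdot\bar{ann}(p^{i})$. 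The $\circ$-power representation is essential here because $(p^{k}a)*y$ has no useful expansion (left distributivity fails in the first argument), whereas $a^{\circ p^{k}}*y$ does; your proposal never invokes this device, and without it the required ``$\subseteq p^{ck}\cdot(\text{right set})$'' statement is out of reach.
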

\begin{proof}
Denote $Q_{t,j}=A*(A*(\cdots (A*ann(p^{j}))))$ where $A$ appears  
$t$ times in this expression.
 
We will show that 
 \[a^{\circ p^{i}}*Q_{t,j}\subseteq p^{i}Q_{t+1,j}\] for $i,j>0$, $t\geq 0$.  Fix $t$. For this given $t$, we proceed by induction on $i$.
  For $i=0$ the result follows since $a*Q_{t,j}\subseteq Q_{t+1,j}=p^{0}Q_{t+1,j}$
 Suppose that the result holds for some $i\geq 0$, so 
  $a^{\circ p^{i}}*A^{j}\subseteq p^{i}A^{j+1}$.
 Denote $a'= a^{\circ p^{i}}$. We need to show that \[a'^{\circ p}*Q(t,j)\subseteq p^{i+1}Q_{t+1,j}.\]

 By Lemma \ref{14}, \[a'^{\circ p}*Q_{t,j}=\sum_{i=1}^{p-1}{p\choose i}e_{i}'(a', Q_{t,j})+e_{p}'(a', Q_{t,j}).\] Notice that ${p\choose i}$ is divisible by $p$ for 
 $0<i<p$ and by the inductive assumption $a'*Q_{t,j}\subseteq p^{i}Q_{t+1,j}$,
 hence  $p^{i-1}\sum_{i=1}^{p-1}{p\choose i}e_{i}'(a', Q_{t,j})\subseteq p^{i+1}Q_{t+1,j}$.

 Observe also that \[e_{p}'(a', Q_{t,j})=e_{p-1}'(a', a'*Q_{t,j})\subseteq e_{p-1}'(a', p^{i}*Q_{t+1,j})\subseteq p^{i+1}*Q_{t+1,j}\] by the inductive assumption (and by assumption 
 $A*(A*\cdots (A*ann(p^{j})))\subseteq  p\cdot ann(p^{j})$).

 Notice that $p^{k}A=\{a^{\circ p^{k}}:a\in A\}$ by  Corollary \ref{bezdowodu} and Lemma \ref{xyz}. Let $d\in p^{k}A$, then $d=a^{\circ k}$ for some $a\in A$. By the above $[d]\odot [Q_{t,j}]\subseteq [Q_{t+1, j}]$ therefore, 
\[P\bullet(P\bullet (\cdots (P\bullet ann_{P}(p^{i}))))\subseteq 
[Q_{c,i}]\subseteq 
p\cdot {\bar {ann }}(p^{i}),\] where $P$ appears $c$ times in this expression.
 This concludes the proof. 
\end{proof}

\section{A passage from pre-Lie rings to  braces}

In this section we denote  $[a]=[a]_{ann (p^{2k})}$.
 
  From Lemma \ref{xyz} it follows that braces satisfying properties $1'$ and $1''$ satisfy properties $1$ and $2$ introduced in the revised version of paper \cite{paper2}. 
 Therefore the following Theorem \ref{71}  is a special case of the result from \cite{paper2} (which was proved in the appendix in \cite{paper2}).

\begin{theorem}\label{71}
 Let $A$ be a brace which satisfy properties $1'$ and $1''$. Let $k$ be such that $p^{k(p-1)}A=0$.
 Let $\rho ^{-1}:pA\rightarrow A$ be any function such that $p\cdot \rho ^{-1}(x)=x$ for each $x\in pA$.
 Let $a,b\in A$, then $[a]*[b]$ is obtained by applying
 operations $+$, $\odot $, 
 to elements $[a]$ and $[b]$ and $\rho ^{-1}$ to elements from $pA$, and by taking cosets $x\rightarrow [x]$ for $x\in A$. Moreover, 
 the order of applying these operation depends only on the additive group of $A$ (it does not depend on $a$ and $b$ and it does not depend on any properties of $*$).  Moreover, the result
 does not depend of the choice of function $\rho ^{-1}$. 
\end{theorem}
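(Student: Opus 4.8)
\textbf{Proof plan for Theorem \ref{71}.}

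The plan is to reduce the claim to the algebraic identity that expresses $a \circ b$ (equivalently $a * b$) in terms of $p$-th powers and the pullback map, and then to track the dependence of each step on the data. The starting point is the observation that, by Lemma \ref{xyz}, in a brace satisfying properties $1'$ and $1''$ we have $p^{k}A = A^{\circ p^{k}}$ and, by Corollary \ref{bezdowodu}, $p^{k}A = \{a^{\circ p^{k}} : a \in A\}$; moreover $ann(p^{i})$, $p^{i}A$ are ideals and $(p^{k}A)*ann(p^{2k}) \subseteq ann(p^{k})$, so all the machinery of Theorem \ref{1} (in particular the well-definedness of $\odot$ on $A/ann(p^{2k})$) is available. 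First I would recall that, by Lemma \ref{14}, for any $a \in A$ one has $a^{\circ p^{k}} = \sum_{i=1}^{p^k} \binom{p^k}{i} e_i'(a)$, and that by Property $1'$ combined with the divisibility of the binomial coefficients (as in the ``additional explanations'' in the proof of Theorem \ref{f(a)}) every term with $i \geq p$ lies deep inside $p^{\ast}A$; this lets one solve for $a*b = e_1'(a,b)$ in terms of $a^{\circ p^k}*b$, lower $e_i'$'s, and an application of $\rho^{-1}$ (or the iterated $\wp^{-1}$), exactly the kind of manipulation carried out in the proof of Lemma \ref{xyz} where $e_1'(a',pb)$ is expressed recursively and then shown to vanish modulo $A^{n+1}=0$.

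The core of the argument is then an induction that unwinds the nested expression. I would set up a ``master recursion'': writing $[a]*[b]$ using the formula $a \circ b = a + b + a*b$ and the BCH-type/Lazard-type passage already implicit in the construction of $\bullet$ and $\odot$ in Theorem \ref{1}, one expresses $a*b$ as a finite combination (using $+$, the operation $\odot$, and $\rho^{-1}$ applied only to elements of $pA$, followed by passing to cosets) of the ``atoms'' $[a]$, $[b]$. Concretely, each application of $\odot$ replaces a product $(p^k x)*y$ by $[\wp^{-1}((p^k x)*y)]$, and since $p^k A = A^{\circ p^k}$ we may always present the left factor as a $p^k$-th power, which is what makes $\odot$ applicable; the finiteness of the recursion is guaranteed because at each stage the ``weight'' of the word increases and $A^{n+1}=0$, so after at most $n$ steps the remaining terms are zero. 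The key point for the ``depends only on the additive group'' claim is that the \emph{pattern} of which operations are applied in which order is dictated entirely by: (i) the exponent $n$ (hence $A^{n+1}=0$), (ii) the prime $p$ and the chosen $k$ with $p^{k(p-1)}A=0$, and (iii) the structure of $ann(p^i)$ and $p^i A$ as subgroups — all of which are determined by the additive group $(A,+)$ alone. The coefficients $\binom{p^k}{i}$, the primitive root $\xi$, and the integers $\beta_w, \gamma_w$ from Lemma \ref{42} and Corollary \ref{43} are likewise functions of $p, n, k$ only, not of the particular multiplication $*$.

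Finally I would verify the two independence claims. Independence of the choice of $\rho^{-1}$: any two choices differ by an element of $ann(p) \subseteq ann(p^{2k})$ at each application, and one argues — as in Lemma \ref{33} and in Part $a$ of the proof of Theorem \ref{f(a)} — that these ambiguities propagate only into $ann(p^{2k})$, hence disappear upon taking the coset $[\,\cdot\,]_{ann(p^{2k})}$; this requires checking that $\rho^{-1}$ is never applied ``too many times in a row'' without an intervening multiplication reducing the annihilator order, which is precisely guaranteed by property $1''$ (the ``$\rho^{-1}$ can be moved to the right past a product of at most $\frac{p-1}{4}$ elements'' phenomenon exploited in Lemma \ref{xyz}). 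Independence of the ordering pattern from $a,b$ themselves: since all the binomial/structural coefficients and the length bound $n$ are independent of $a,b$, the only place $a,b$ enter is as the input atoms of a fixed formula. The main obstacle I anticipate is bookkeeping the error terms at each step of the master recursion — showing that every term one discards genuinely lies in $ann(p^{2k})$ (or in $p^{j}A + ann(p^{2k})$ with $j$ increasing), which forces a careful simultaneous induction on both the word-weight and the $p$-adic depth, much as in the two-parameter induction in the proof of Theorem \ref{f(a)}; but since the excerpt says this theorem is a special case of a result already proved in the appendix of \cite{paper2}, the real work is to check that properties $1'$ and $1''$ imply properties $1$ and $2$ of \cite{paper2}, which is exactly the content of Lemma \ref{xyz}, and then invoke that result.
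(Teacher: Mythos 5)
The paper gives no internal proof of Theorem \ref{71} at all: it simply observes that, by Lemma \ref{xyz}, braces with properties $1'$ and $1''$ satisfy the hypotheses of the result proved in the appendix of \cite{paper2}, and invokes that result. Your final paragraph identifies exactly this reduction (check Lemma \ref{xyz}, then cite \cite{paper2}), so you take essentially the same route as the paper; the preceding ``master recursion'' sketch is a plausible outline of how the cited result is actually proved, but it is extra material not present in, and not needed to match, the paper's own argument.
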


 We can now prove our main result:

\begin{theorem}\label{nareszcie} Let $p>3$ be a prime number. Let $n,k$ be natural numbers.
 Let $A$ be a brace of cardinality $p^{n}$ satisfying properties $1'$ and $1''$. Suppose that $p^{k(p-1)}A=0$. Let $\wp^{-1}: p^{k}A\rightarrow A$ be a function satisfying $p\cdot \wp^{-1}(x)=x$ for all $x\in p^{k}A$.  Let $(A/ann(p^{2k}), +, \bullet)$ be the pre-Lie ring constructed in Theorem \ref{1} from the brace $A$. Then the following holds. 
\begin{enumerate}
\item   
\[p^{2k}[a]\odot [b]=q'([a], [b])\]
 where $q'([a],[b])$ is an element obtained by applying
 operations $+$, $\bullet  $ 
 to elements $[a],[b]$ and the order of applying these operation depends only on the additive group of $A$ (and it does not depend on $a,b$ or $* $).

\item The  brace $A/{ann(p^{4k})}$ can be  recovered from the pre-Lie ring $(A/ann(p^{2k}), \bullet, +)$ 
 by applying operations $+, \bullet$ and $\rho ^{-1}$, and the operation of taking cosets,  and the order of applying these operations depends only on the additive group of $A$, and the result does not depend on the choice of function $\rho ^{-1}$ .
\end{enumerate}
\end{theorem}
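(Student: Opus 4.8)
The plan is to prove the two parts in order, using the operation $\odot$ as the bridge between the pre-Lie ring $(A/ann(p^{2k}),+,\bullet)$ and the brace $A/ann(p^{4k})$. For Part 1, recall from Theorem \ref{1} that $[x]\bullet[y]=\sum_{i=0}^{p-2}\xi^{p-1-i}[\xi^{i}x]\odot[y]$, which expresses $\bullet$ in terms of $\odot$ via a fixed integer combination that depends only on $n$ (through $\xi=\gamma^{p^{n-1}}$) and hence only on the additive group of $A$. The first task is to invert this relation: I would show that $[x]\odot[y]$ can be written as a fixed integer linear combination of the iterates $[\xi^{i}x]\bullet[y]$, exactly as in the classical passage between a Lie-type bracket and a pre-Lie product (the Engel-type change of variables governed by the primitive root $\xi$ modulo $p^n$, cf. Lemma \ref{Engelxi} and the corresponding step in \cite{asas, passage}). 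Since $\xi^{p-1}\equiv 1\pmod{p^n}$ and the powers $\xi^{j}$ are distinct mod $p$ for $0<j<p-1$, the relevant $(p-1)\times(p-1)$ circulant-type matrix of coefficients is invertible modulo $p^n$, so such an integer combination exists and depends only on $n$. Multiplying through by $p^{2k}$ and using that $p^{2k}[a]\odot[b]=[p^{2k}\wp^{-1}((p^{k}a)*b)]=[p^{k}((p^{k}a)*b)]$ is annihilated appropriately, I would absorb the denominators coming from $\wp^{-1}$; the key point is that after multiplying by $p^{2k}$ no division survives, so $p^{2k}[a]\odot[b]$ becomes a genuine $+,\bullet$ expression $q'([a],[b])$ with combinatorial data depending only on $(A,+)$.

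For Part 2, the strategy is to combine Part 1 with Theorem \ref{71}. Theorem \ref{71} already tells us that $[a]*[b]$ (in $A/ann(p^{2k})$, hence enough to determine the $\circ$-multiplication on $A/ann(p^{4k})$ via $a\circ b=a+b+a*b$ together with the pullback $\rho^{-1}$) is obtained from $[a],[b]$ by a fixed sequence of operations $+$, $\odot$, and $\rho^{-1}$ applied to elements of $pA$, with the order depending only on $(A,+)$ and the result independent of the choice of $\rho^{-1}$. The remaining task is to eliminate $\odot$ in favour of $\bullet$ and $\rho^{-1}$: each occurrence of $[u]\odot[v]$ appearing inside the Theorem \ref{71} expression should be replaced using Part 1, i.e. $[u]\odot[v]=\rho^{-1}\big(\rho^{-1}(\cdots\rho^{-1}(q'([u],[v]))\cdots)\big)$ where $\rho^{-1}$ is iterated $2k$ times to undo the $p^{2k}$ factor (one must check the iterated $\rho^{-1}$ is applied only to elements actually divisible by the appropriate power of $p$, which holds because $q'([u],[v])\in p^{2k}(A/ann(p^{2k}))$ by construction). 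Tracking ideals: since all the $\odot$'s in Theorem \ref{71} occur in a controlled way and $p^{i}A$, $ann(p^{i})$ are ideals (Lemma \ref{xyz}), the substitutions stay well defined on $A/ann(p^{4k})$ rather than only on $A/ann(p^{2k})$ — here the jump from $ann(p^{2k})$ to $ann(p^{4k})$ is exactly what the extra $2k$ applications of $\rho^{-1}$ cost. Finally, independence of the result from the choice of $\rho^{-1}$ follows from the analogous independence in Theorem \ref{71} together with Lemma \ref{33}, which guarantees that any two choices of pullback differ by something in $ann(p^{\bullet})$ that is killed after taking the relevant coset.

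I expect the main obstacle to be the bookkeeping in Part 2 of \emph{which power of $p$ divides each intermediate element}, so that every application of $\rho^{-1}$ (and of the iterated $\rho^{-1}$ coming from Part 1) is legitimate, and so that the final expression genuinely descends to $A/ann(p^{4k})$ and no further. This is the place where the hypotheses $p^{k(p-1)}A=0$, properties $1'$ and $1''$, and the ideal statements $(p^{i}A)*ann(p^{j})\subseteq ann(p^{j-i})$ from Lemma \ref{xyz} all get used simultaneously; in particular one needs that the "loss" incurred when $\rho^{-1}$ is applied to an element of $p^{i}A\cap ann(p^{j})$ is at most the drop from $ann(p^{j})$ to $ann(p^{j-1})$, and that the total loss over all pullbacks in the Theorem \ref{71} expression plus the Part 1 expansions is bounded by $2k$, yielding $A/ann(p^{4k})$. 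A secondary but routine obstacle is verifying that the integer coefficients in the inversion of the $\bullet$-to-$\odot$ relation can be chosen uniformly (independently of $a,b$ and of the brace structure), which follows from the invertibility mod $p^n$ of the Vandermonde/circulant matrix attached to $\xi$, exactly as in \cite{asas}.
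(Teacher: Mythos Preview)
Your Part~2 outline is essentially the paper's argument: once Part~1 is established, each occurrence of $\odot$ in the expression supplied by Theorem~\ref{71} is replaced by $2k$ iterations of $\rho^{-1}$ applied to $q'$, and the descent to $A/ann(p^{4k})$ works exactly as you describe. The gap is in Part~1. Your plan is to invert the relation $[x]\bullet[y]=\sum_{i=0}^{p-2}\xi^{p-1-i}[\xi^{i}x]\odot[y]$ via a circulant/Vandermonde argument, expressing $[x]\odot[y]$ as an integer combination of the $[\xi^{i}x]\bullet[y]$. But $\bullet$ is \emph{additive in its first argument} (this is proved in Theorem~\ref{1}), so $[\xi^{i}x]\bullet[y]=\xi^{i}\big([x]\bullet[y]\big)$ for every $i$; the $(p-1)\times(p-1)$ matrix you want to invert therefore has rank~$1$. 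Any integer combination of the $[\xi^{i}x]\bullet[y]$ is merely an integer multiple of $[x]\bullet[y]$, and since $\odot$ (coming from the brace operation $*$) is \emph{not} linear in its first variable, no such identity can hold. The Vandermonde arguments in \cite{asas,passage} that you cite do something different: they isolate the degree-one part of $(ma)*b$ as a polynomial in $m$, they do not invert a linear map from $\bullet$-data back to $\odot$-data.

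What the paper actually does for Part~1 is nonlinear and rests on a structural input you did not invoke. The sub-brace $B=p^{k}A$ satisfies $B^{[p-1]}\subseteq p^{k(p-1)}A=0$, so $B$ is strongly nilpotent of index less than $p$. By the correspondence for such braces (\cite{passage}, or more generally \cite{ST}), $B$ is the group of flows of the pre-Lie ring $(B,+,\cdot)$ of Proposition~\ref{12345}; hence for $x,y\in B$ the product $x*y$ is given by the explicit iterated group-of-flows polynomial in $\cdot$ and $+$. Applying this with $x=p^{k}a$, $y=p^{k}b$ and using $p^{2k}\big([a]\bullet[b]\big)=[(p^{k}a)\cdot(p^{k}b)]$, one rewrites $p^{2k}[a]\odot[b]=[(p^{k}a)*(p^{k}b)]$ as a polynomial in $+,\bullet$ on $[a],[b]$; this polynomial is $q'$. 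The factor $p^{2k}$ is there so that \emph{both} arguments land in $B$ and the flows formula applies, not to absorb denominators from $\wp^{-1}$.
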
 
\begin{proof}  {\em An introductory  part.} We will express $[x] \odot [y]$ by using the pre-Lie operation $\bullet $ and $+$ applied
to some copies of elements $[x], [y]$, for $x, y\in A$ (in a way which only depends on the
additive group $(A, +)$). Observe that $B=p^{k}A$ is a brace such that $B^{[p-1]}=0$, since $p^{k}A$ is an ideal in $A$ and $(p-1)k\geq \alpha $.

 Observe that brace $B=p^{k}A$ has strong nilpotency index $k<p$. 
By using the same proof as in \cite{passage} we see that  brace $B=p^{k}A$
  is obtained as a group of flows of a left nilpotent  pre-Lie ring $(B, +, \cdot )$. Because brace $B$ is strongly nilpotent of strong nilpotency order $k<p$ we can use 
the same proofs as in \cite{passage} (or we can prove this fact  by  using a more recent and more general result proved in \cite{ST} that every brace of the left nilpotency index less than $p$ can be obtained as the group of flows of some left nilpotent pre-Lie algebra).

Moreover, because $B^{[p-1]}=0$ we obtain that 
for $x,y\in B,$ 
\[\sum_{i=0}^{p-2}\xi ^{p-1-i}((\xi ^{i}x)* y)=(p-1)x\cdot y.\]

 Therefore for $a,b\in A$, for $x=p^{k}a$, $y=p^{k}b$ we get that 
 \[(p^{k}a)*(p^{k}b)=q(a,b)\] where $q(a,b)$  is obtained by the formula for the group of flows (which can be expressed by using operations $\cdot $ and $+$)  applied to elements $p^{k}a$ and $p^{k}b$.
 This shows that $[a]\odot [b]=[\wp^{-1}(\wp^{-1}(q(p^{k}a, p^{k}b)))]$.
 Recall that $\wp ^{-1}:p^{k}A\rightarrow A$ is obtained by applying $k$-times $\rho ^{-1}$.
$ $

{\em Part $1$.} Let notation be as in Part $1$ above. 
Let $A/ann (p^{i})$ be the pre-Lie ring constructed in Theorem \ref{1} from our brace $A$.
 Then by the  definition of $\bullet $ we have
  \[p^{2k}[a]\bullet [ b]= [(p^{k}a)\cdot (p^{k}b)]=[p^{k}a]\cdot [p^{k}b].\]

By taking the factor brace $A/p^{2k}A$ we obtain that 
 $p^{2k}[a*b]$ is obtained by applying  operations $\cdot $ and $+$ to elements $[p^{k}a], [p^{k}b]$ and hence it is obtained by applying operations $\bullet $ and $+$ to elements $[a], [b]$.
 We write it as 
\[p^{2k}[a]\odot [b]=[(p^{k}a)]*[(p^{k}b)]=q'([a],[b]),\]
 where $q'([a],[b])$ is obtained by applying operations $\bullet $ and $+$ to elements $[a], [b]$.

 This concludes the proof of Part $1$.

$ $

{\em Part $2$}  is obtained by applying Part $1$ and observing that 
\[p^{2k}[a]\odot [b]=[(p^{k}a)*(p^{k}b)]=q'([a],[b])\]
 implies
\[[[a]\odot [b]]_{ann(p^{2k})}=[\wp^{-1
}(\wp^{-1}(q'([a],[b])))]_{ann(p^{2k})}.\]
 This can be seen by multiplying both sides by $p^{2k}$. 

 Therefore \[[[a]\odot [b]]_{ann (p^{4k})}=w([a], [b])\]
 where $w([a]_{ann (p^{4k})},[b]_{ann (p^{4k})})$ is an element obtained by applying
 operations $+$, $\bullet  $, $\rho ^{-1}$
 to elements $[a],[b]$ and the order of applying these operation depends only on the additive group of $A$. It also applies the  operation of taking cosets $x\rightarrow [x]_{ann(p^{2})}$ for $x\in A/ann(p^{2k})$.  Recall also that the result
 does not depend of the choice of function $\rho ^{-1}$ used to construct it (and $\rho ^{-1}$ is always applied to elements from $pA$). 
 Moreover, $w([a], [b]) =[\wp^{-1}(\wp^{-1}(q'([a],[b])))]_{ann(p^{2k})}$.

 Denote $x=[a], y=[b]$ then $x,y\in A/ann(p^{2k})$. 
Observe that 
\[[[\wp^{-1}((p^{k}a)*b)]]_{ann(p^{2k})}=[[a]\odot [b]]_{ann(p^{2k})}=[\wp^{-1}(p^{k}x*y)]\]
 This can be seen by multyplying both sides by $p^{2k}$.
 Therefore, 
\[[[a]\odot [b]]_{ann(p^{2k})}=[x]_{ann(p^{2k})}\odot [y]_{ann(p^{2k})}.\]

 we obtain that if $x,y\in A/ann(p^{2k})$ then 
$[x]_{ann(p^{2k})}\odot [y]_{ann(p^{2k})}$ is obtained by taking
operations $+$, $\bullet  $, $\rho ^{-1}$ (and operation of taking cosets)
 to elements $x,y$ and the order of applying these operation depends only on the additive group of $A$. The result now follows from 
 Theorem \ref{71}.
\end{proof}

{\bf Proof of Theorem \ref{main}}.  It follows from Theorem \ref{nareszcie} (2).

\begin{lemma}\label{s} Let $A$ be a brace of cardinality $p^{n}$ for some prime number $p$ and some natural number $n$. Suppose  that the additive group of $A$ is is a direct sum of some number of cyclic groups of cardinality $p^{\alpha }$ for some natural number $\alpha  $. Suppose that $A$ satisfies property $1'$, then $A$ satisfies property $1''$. 
\end{lemma}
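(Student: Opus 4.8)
The plan is to show that Property $1''$ follows automatically from Property $1'$ when the additive group of $A$ is homogeneous, i.e.\ a direct sum of cyclic groups all of the same order $p^{\alpha}$. The key structural fact to exploit is that in such a group $ann(p^{i}) = p^{\alpha-i}A$ for every $0 \le i \le \alpha$. Indeed, if $(A,+) \cong C_{p^{\alpha}}^{\oplus \beta}$, then an element is killed by $p^{i}$ precisely when every coordinate is a multiple of $p^{\alpha-i}$, which says exactly that the element lies in $p^{\alpha-i}A$. So the ``annihilator filtration'' and the ``$p$-power filtration'' coincide (up to reindexing), and Property $1''$ becomes a statement purely about the behaviour of the $*$-operation on the subgroups $p^{j}A$.

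\medskip

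With that identification in hand, I would proceed as follows. First I would record that Property $1'$ implies, via Proposition \ref{333} (applied with the hypothesis $(\lambda_a - I)^{\frac{p-1}{2}}(b) \in pA$, which is a consequence of Property $1'$ since $\lfloor\frac{p-1}{4}\rfloor \le \frac{p-1}{2}$), that each $p^{j}A$ is an ideal of $A$. Next, the crucial computational point: for $a, b \in A$ one has the elementary identity $a * (p^{j}c) = p^{j}(a*c)$, which holds in any brace because $\lambda_a$ is additive; iterating, $e'_{m}(a, p^{j}c) = p^{j} e'_{m}(a,c)$ in the notation of Lemma \ref{14}. Now take $x \in ann(p^{i}) = p^{\alpha - i}A$, say $x = p^{\alpha-i} c$. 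Then
\[
a * (a * (\cdots * (a * x))) = p^{\alpha - i}\, \bigl(a*(a*(\cdots*(a*c)))\bigr),
\]
with $a$ appearing $\lfloor\frac{p-1}{4}\rfloor$ times on each side. By Property $1'$ the inner expression $a*(a*(\cdots*(a*c)))$ lies in $pA$, so the whole thing lies in $p^{\alpha - i + 1}A = ann(p^{i-1})$, which is precisely the assertion of Property $1''$.

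\medskip

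So the proof reduces to two things: (1) the group-theoretic identification $ann(p^{i}) = p^{\alpha-i}A$ in a homogeneous abelian $p$-group (and the boundary checks $ann(p^{0}) = 0$, $ann(p^{\alpha}) = A$, together with $p^{\alpha-i+1}A \subseteq ann(p^{i-1})$, all of which are immediate), and (2) the ``pull out the scalar $p^{\alpha-i}$'' manoeuvre, which only uses additivity of $\lambda_a$ and does not even need $p^{j}A$ to be an ideal — though having the ideal property from Proposition \ref{333} makes the bookkeeping cleaner and lets one invoke Lemma \ref{7} directly for the shape of the additive group of quotients if needed. I expect the main (though still modest) obstacle to be purely notational: making sure the count of how many times $a$ appears in the nested product is handled uniformly — i.e.\ that the same expression $a*(a*\cdots*(a*(-)))$ with $\lfloor\frac{p-1}{4}\rfloor$ copies of $a$ appears in both the hypothesis (Property $1'$) and the conclusion (Property $1''$), so that no slippage in the index occurs — and to double-check the edge case $i=1$, where $ann(p^{0}) = 0$ and the claim is simply that $a*(\cdots*(a*x)) = 0$ for $x \in ann(p) = p^{\alpha-1}A$, which again follows from writing $x = p^{\alpha-1}c$ and pulling the scalar out in front of a member of $pA$, landing in $p^{\alpha}A = 0$.
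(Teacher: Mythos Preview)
Your proposal is correct and follows essentially the same argument as the paper's own proof: identify $ann(p^{i}) = p^{\alpha-i}A$ in the homogeneous additive group, pull the scalar $p^{\alpha-i}$ through the iterated $*$-product using additivity of $\lambda_a$, and apply Property~$1'$ to land in $p\cdot p^{\alpha-i}A = p^{\alpha-i+1}A = ann(p^{i-1})$. The paper's version is a one-line computation recording exactly these steps; your additional remarks about Proposition~\ref{333} and the edge case $i=1$ are correct but not needed for the argument.
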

\begin{proof} Suppose that  the additive group of $A$ be a direct sum of cyclic 
 groups of cardinality $p^{\alpha }$ for some $\alpha $. Then $ann(p^{i})=p^{\alpha -i}A$, for each $i$. Therefore, 
   $ a*(a* \cdots a*ann(p^{i}))=a*(a* \cdots (a*p^{\alpha -i}A))\in p\cdot p^{\alpha -i}A=p^{\alpha +1-i}A=ann(p^{i-1})$.
\end{proof}

{\bf Proof of Corollary \ref{uniform}.} It follows from Theorem \ref{main} and from Lemma \ref{s}. 
 Notice that Proposition \ref{bbb} applies for braces satisfying assumptions of Corollary \ref{uniform}. 
$ $

{\bf Acknowledgments.} The author acknowledges support from the
EPSRC programme grant EP/R034826/1 and from the EPSRC research grant EP/V008129/1. 
 The author is very  grateful to Michael West for his help with writing the introduction. The author is very greateful to Bettina Eick and Efim Zelmanov for answering her questions about what is known about extensions of Lazard's correspondence in group theory.

\end{document}